\newtheorem{theorem}{Theorem}
\newtheorem{lemma}[theorem]{Lemma}
\newtheorem{definition}{Definition}
\newtheorem{example}{Example}
\newtheorem{proposition}{Proposition}
\newcommand\sbullet[1][.5]{\mathbin{\vcenter{\hbox{\scalebox{#1}{$\bullet$}}}}}
\newcommand{\singular}{^{\sbullet[0.8]}}
\begin{document}

\title{A perturbed Alexander polynomial for singular knots}
\author[1, 2]{\normalsize{Martine Schut} \thanks{Correspondence email: \href{mailto:martine.schut@rug.nl}{martine.schut@rug.nl}}}
\author[1]{\normalsize{Roland van der Veen}}
\affil[1]{\small{Bernoulli Institute for Mathematics, Computing Science and Artificial Intelligence, University of Groningen, 9747AG Groningen, the Netherlands}}
\affil[2]{\small{Van Swinderen Institute for Particle Physics and Gravity, University of Groningen, 9747AG Groningen, the Netherlands}}
\date{\normalsize{\today}}

\maketitle

\begin{abstract}
We introduce a version of the Alexander polynomial for singular knots and tangles and show how it can be strengthened considerably by introducing a perturbation. For singular long knots, we also prove that our Alexander polynomial agrees with previous definitions.
\end{abstract}

\tableofcontents

\newpage
\section{Introduction}
We introduce a strong singular knot invariant that is easy to compute.
The invariant is closely related to the singular version of the Alexander polynomial.
Our strategy is based on the $\Gamma$-calculus developed by Bar-Natan~\cite{bar2015balloons,vo2018alexander} and Selmani~\cite{bar2013meta}; also see van der Veen and Mashaghi~\cite{mashaghi2021polynomial}.
The $\Gamma$-calculus can define the tangle diagrams in a recursive way such that the Alexander polynomial can easily be computed. 
Analogously, we define the singular $\Gamma$-calculus, denoted $\Gamma_\textup{s}$.
Our invariant works for tangles, and we start by introducing singular tangle diagrams as we use a slightly unconventional notation for tangle diagrams.
As the $\Gamma$-calculus produces the Alexander polynomial, the $\Gamma_\textup{s}$-calculus is expected to produce the singular version of the Alexander polynomial.
Furthermore, we propose an extension of the $\Gamma_\textup{s}$-calculus to construct a stronger invariant, which we refer to as the perturbed singular Alexander polynomial and denote $\rho_1^\textup{s}$. 
This invariant is obtained in a way analogously to the perturbed Alexander invariant $\rho_1$ introduced in ref.~\cite{bar2021perturbed,bar2022perturbed}. 
The invariant $\rho_1^\textup{s}$ is easy to compute and stronger than the singular Alexander polynomial. 
Appendix~\ref{app:knot_table} shows a singular knot table with knots up to five crossings with their singular Alexander and perturbed singular Alexander polynomial.
In studying proteins (which are folded chains of amino acids) or molecular folding, knot theory can be used; see e.g.~\cite{mashaghi2014circuit}.
A singular crossing could, for example, represent a rigid H-bond in these studies.

\section{The singular Gamma-calculus}\label{sec:sing_gamma}
We introduce the singular tangle diagrams along with their Reidemeister moves\index{tangle diagram} and connection to singular braids\index{Reidemeister moves!singular tangles}. 
Then we introduce the $\Gamma$-calculus for singular tangles.

\subsection{Singular tangle diagrams}

\begin{definition}
A singular tangle diagram $D$ is a directed graph with vertices carrying a cyclic ordering of the edges meeting there.
We allow two types of vertices: endpoints and crossings. At an endpoint, only one edge is present. At a crossing, precisely
two adjacent edges enter and two exit. The crossings are labelled by an element of the set $\{0,-1,1\}$ and are called singular, negative and positive, respectively.
The set of edges is assumed to be a disjoint union of directed paths, each starting and ending at distinct endpoints, going straight at each crossing.
These paths are known as the strands of the diagram. Each of the strands carries a distinct label, and the set of strand labels is denoted by  
$\mathcal{L}(D)$.
\end{definition}

A singular tangle diagram with a single strand is called a (singular) long knot diagram. 
The most basic examples of singular tangle diagrams are the three crossings (denoted $X^+_{ij},X^{-}_{ij}$ and $X{\singular}_{ij}$.) and the crossingless strand $1_i$ shown in fig.~\ref{fig:crossings}. 
Here, $i$ is the label of the overpassing strand, and $j$ is the label of the underpass.
In the singular case, we take $i$ as the label of the strand belonging to the left-most incoming edge.
%
\begin{figure}[htp!]
\begin{center}
\includegraphics[width=0.6\textwidth]{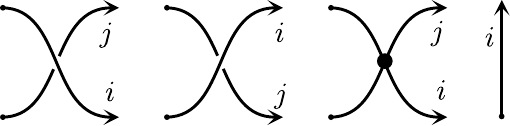}
\caption{From left to right: The positive crossing $X^{+}_{ij}$, the negative crossing $X^{-}_{ij}$, the singular crossing $X\singular_{ij}$ and the crossingless strand $1_i$.}
\label{fig:crossings}
\end{center}
\end{figure}
Any other singular tangle diagram can be obtained from the minimal ones from fig.~\ref{fig:crossings} using two operations: disjoint union and merging. 

\begin{definition}[Disjoint union]\label{def:disjoint-union}
    By disjoint union, we literally mean the disjoint union of the underlying graphs, assuming the strand label sets are distinct (or possibly renaming the strands to make sure the labels of the strands remain distinct).
\end{definition}
To save space, we will use juxtaposition to denote the disjoint union of diagrams.
For example, $X_{ij}^+X_{kl}\singular$ is the disjoint union of a positive crossing with understrand $i$ and a singular crossing with left-most strand $k$.

\begin{definition}[Merging]\label{def:merging}
    Given a singular tangle diagram $D$ and $i\neq j\in \mathcal{L}(D)$ and $k\notin \mathcal{L}(D)$, define a new diagram $m^{ij}_k(D)$ as follows:
    Suppose that in $D$ the final edge of strand $i$ 
    with endpoints $(v,w)$ and suppose that the first edge of strand $j$ is $(x,y)$. 
    The diagram $m^{ij}_k(D)$ is obtained from $D$ by deleting the edges $(v,w)$ and $(x,y)$ from $D$ and adding a new edge $(v,y)$. 
    The resulting strand is renamed $k$ so that
    $\mathcal{L}(m^{ij}_k(D)) = \mathcal{L}(D)\setminus\{i,j\}\cup \{k\}$.    
\end{definition}

Sometimes it is useful to merge many strands at the same time. 
For this we introduce the notation $m^{i,j}_k$ where $i,j,k$ are vectors of the same length $n$ and where the sets $\{i_1,\dots i_n\}$ and $\{j_1,\dots j_n\}$ are disjoint.
The notation $m^{i,j}_k$ then means $m^{i,j}_k = m^{i_1,j_1}_{k_1} \circ m^{i_2,j_2}_{k_2} \circ \dots m^{i_n,j_n}_{k_n}$, i.e. the merging of ordered sets.
Also, $m^{i}_k$ is assumed to mean $m^{1,k}_{k}\circ m^{2,k}_{k}\circ \dots \circ m^{i_{n-2},k}_{k}\circ m^{i_{n-1},i_n}_k$, i.e. the subsequent merging to the same strand.

Since they really are graphs with some additional structure, our tangle diagrams can easily be assembled from the basic diagrams shown above using these two operations.

\subsubsection*{Reidemeister moves}
Since we want to investigate tangle projections (resulting in 2-dimensional diagrams) of tangles (i.e. 3-dimensional objects), we present the Reidemeister moves on the tangle diagrams.
From Reidemeister's theorem~\cite{reidemeister2013knotentheorie}, if two tangle diagrams are equivalent, meaning that they can be related with a sequence of Reidemeister moves (and possibly renaming the labels), then they correspond to the same tangle.
Therefore, we use the Reidemeister moves presented below to describe the equivalence of tangle diagrams:
\begin{align}
m^{1,2}_1 X_{1,2}^\pm &= 1_1\ , \qquad m^{2,1}_1 X_{1,2}^\pm = 1_1 \label{eq:RM1} \\ 
m^{(1,2),(3,4)}_{1,2}(X^{+}_{12}X^{-}_{3,4}) &= 1_11_2 \label{eq:RM2} \\
   m^{(1,2,3),(4,5,6)}_{1,2,3}(X^{+}_{1,2} X^+_{4,3} X^{+}_{5,6}) &= m^{(1,2,3),(4,5,6)}_{1,2,3} (X^{+}_{1,6} X^+_{2,3} X^{+}_{4,5}) \label{eq:RM3} \\
   m^{(1,2),(4,3)}_{1,2}(X\singular_{1,2} X_{3,4}^+)  &= m^{(1,2),(4,3)}_{1,2}(X^+_{1,2} X_{3,4}\singular) \label{eq:sing_RM_rel1} \\
    m^{(1,2,3),(4,5,6)}_{1,2,3}(X^{-}_{2,1} X\singular_{4,3} X^{+}_{5,6}) &= m^{(1,2,3),(4,5,6)}_{1,2,3} (X^{+}_{2,3} X\singular_{1,6} X^{-}_{5,4}) \label{eq:sing_RM_rel2}
\end{align}

\begin{figure}[thb]
     \centering
     \begin{subfigure}[h]{0.24\textwidth}
         \centering
         \includegraphics[width=\linewidth]{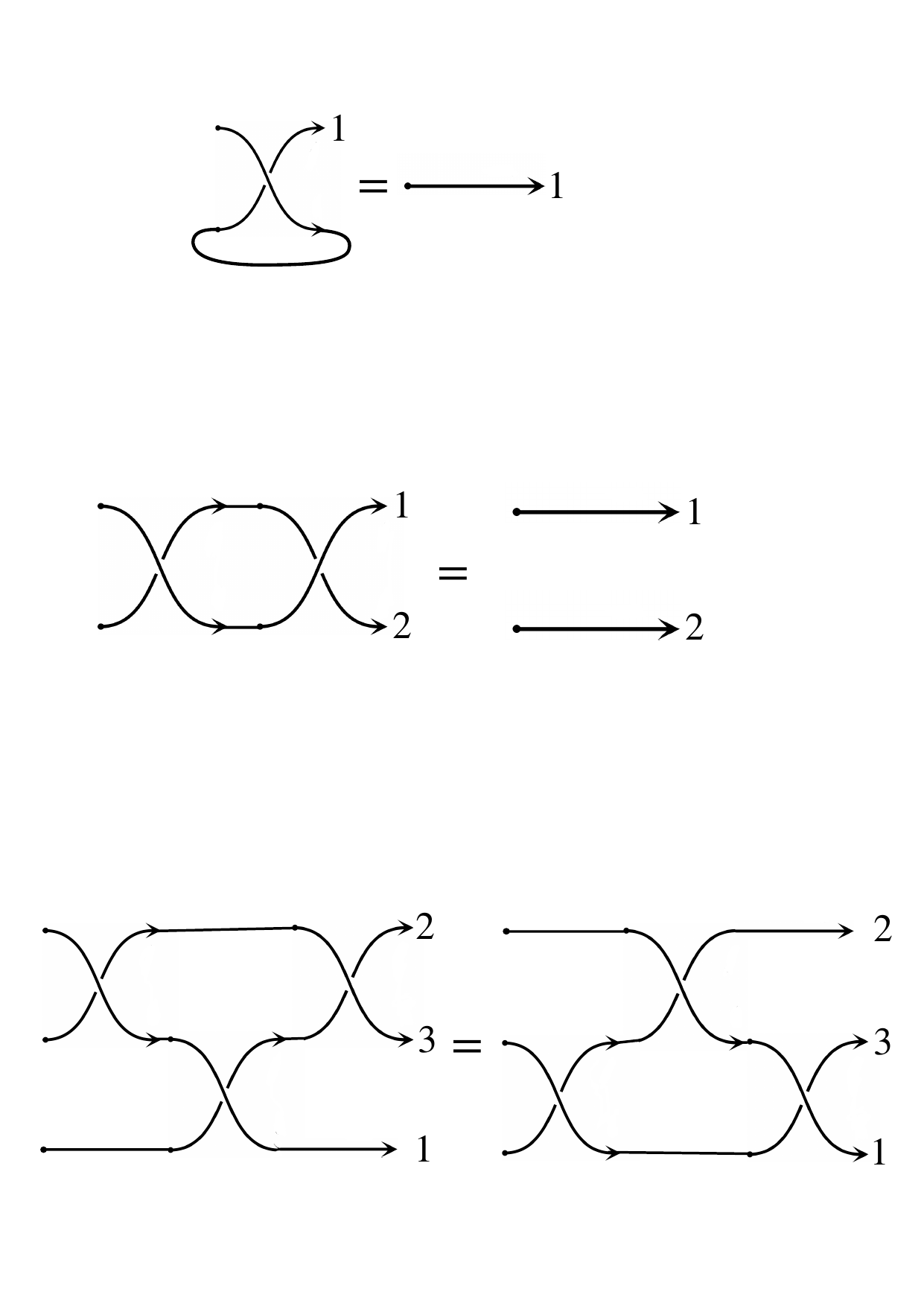}
         \caption{Reidemeister I, \newline $m^{1,2}_1 X_{1,2}^+ = 1_1$, see eq.~\eqref{eq:RM1}.}
         \label{fig-knots:RM1}
     \end{subfigure}
     \hfill
     \begin{subfigure}[h]{0.34\textwidth}
         \centering
         \includegraphics[width=\linewidth]{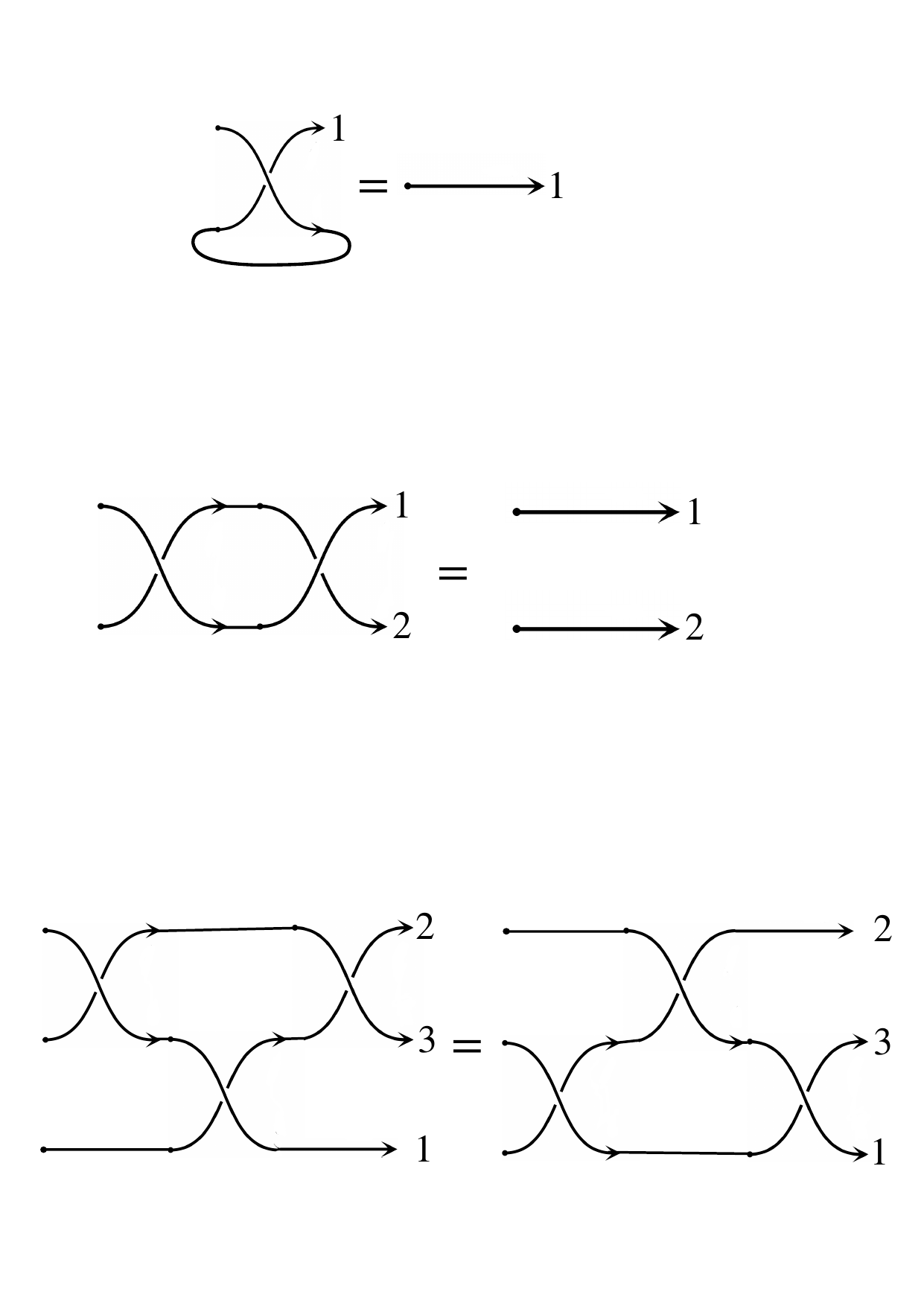}
         \caption{Reidemeister II in eq.~\eqref{eq:RM2}.}
         \label{fig-knots:RM2}
     \end{subfigure}
     \hfill
     \begin{subfigure}[h]{0.34\textwidth}
         \centering
         \includegraphics[width=\linewidth]{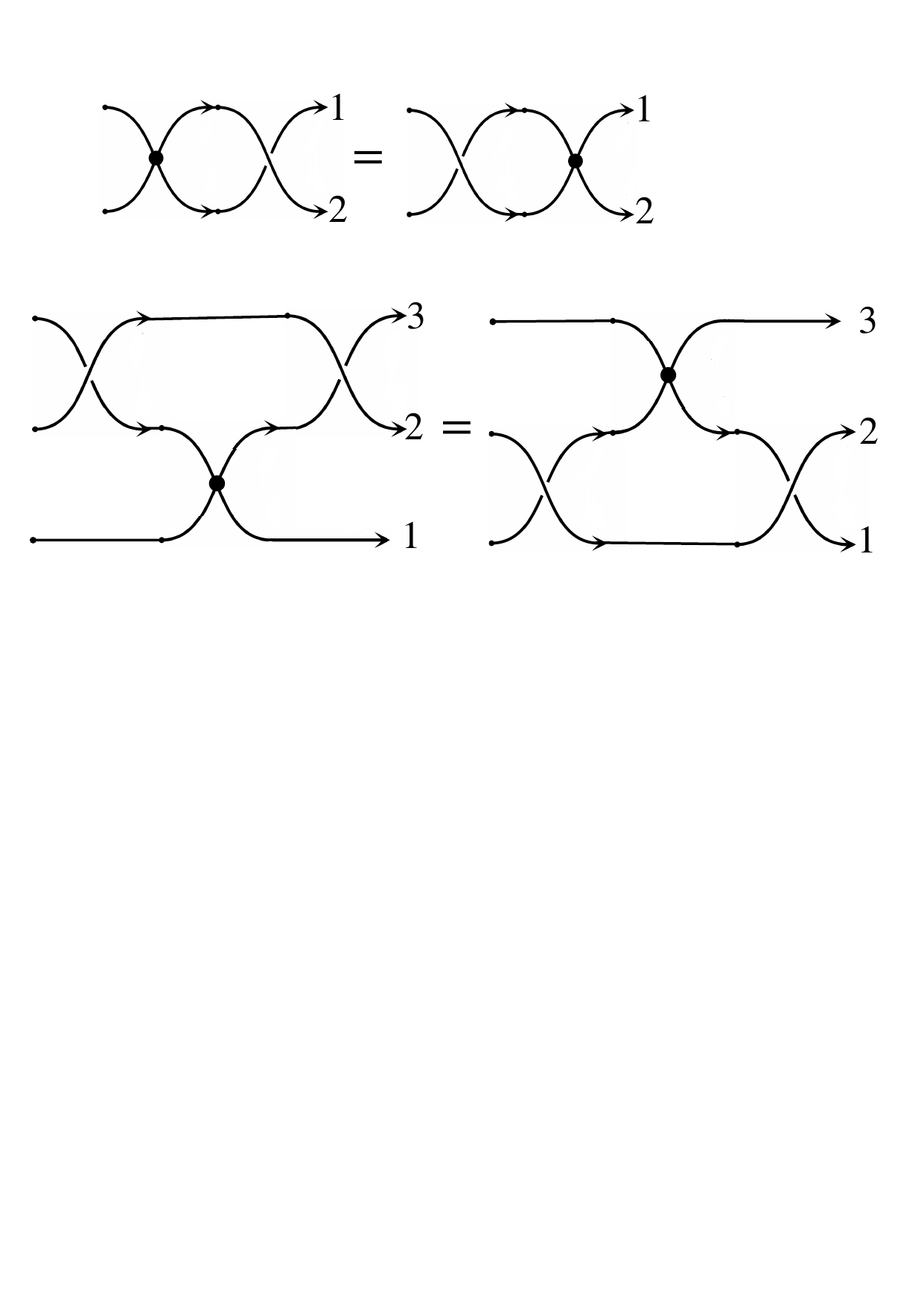}
         \caption{Singular Reidemeister II in eq.~\eqref{eq:sing_RM_rel1}.}
         \label{fig-knots:sRM2}
     \end{subfigure}
     \newline
     \begin{subfigure}[h]{0.45\textwidth}
         \centering
         \includegraphics[width=\linewidth]{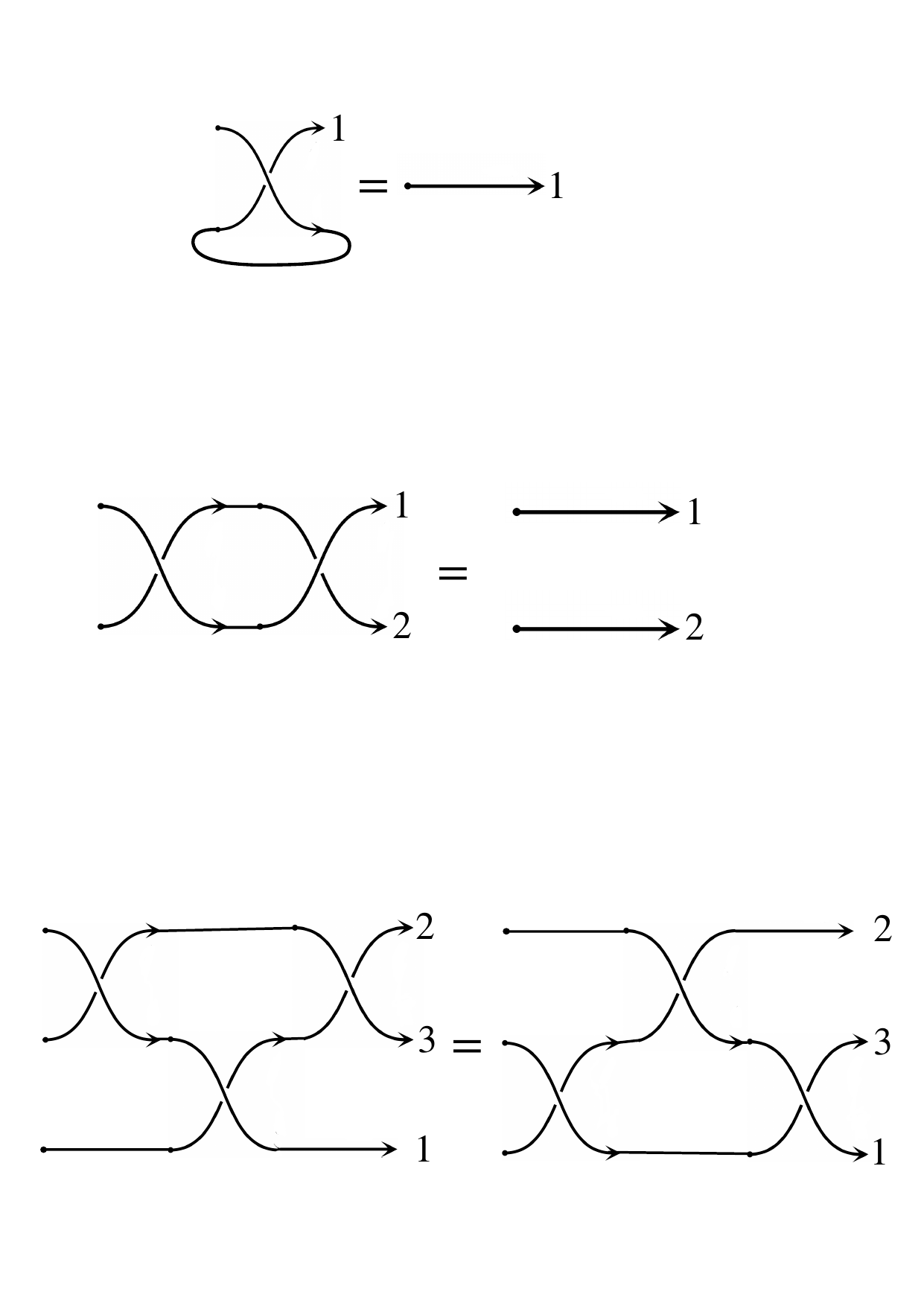}
         \caption{Reidemeister III in eq.~\eqref{eq:RM3}.}
         \label{fig-knots:RM3}
     \end{subfigure}
     \hfill
     \begin{subfigure}[h]{0.45\textwidth}
         \centering
         \includegraphics[width=\linewidth]{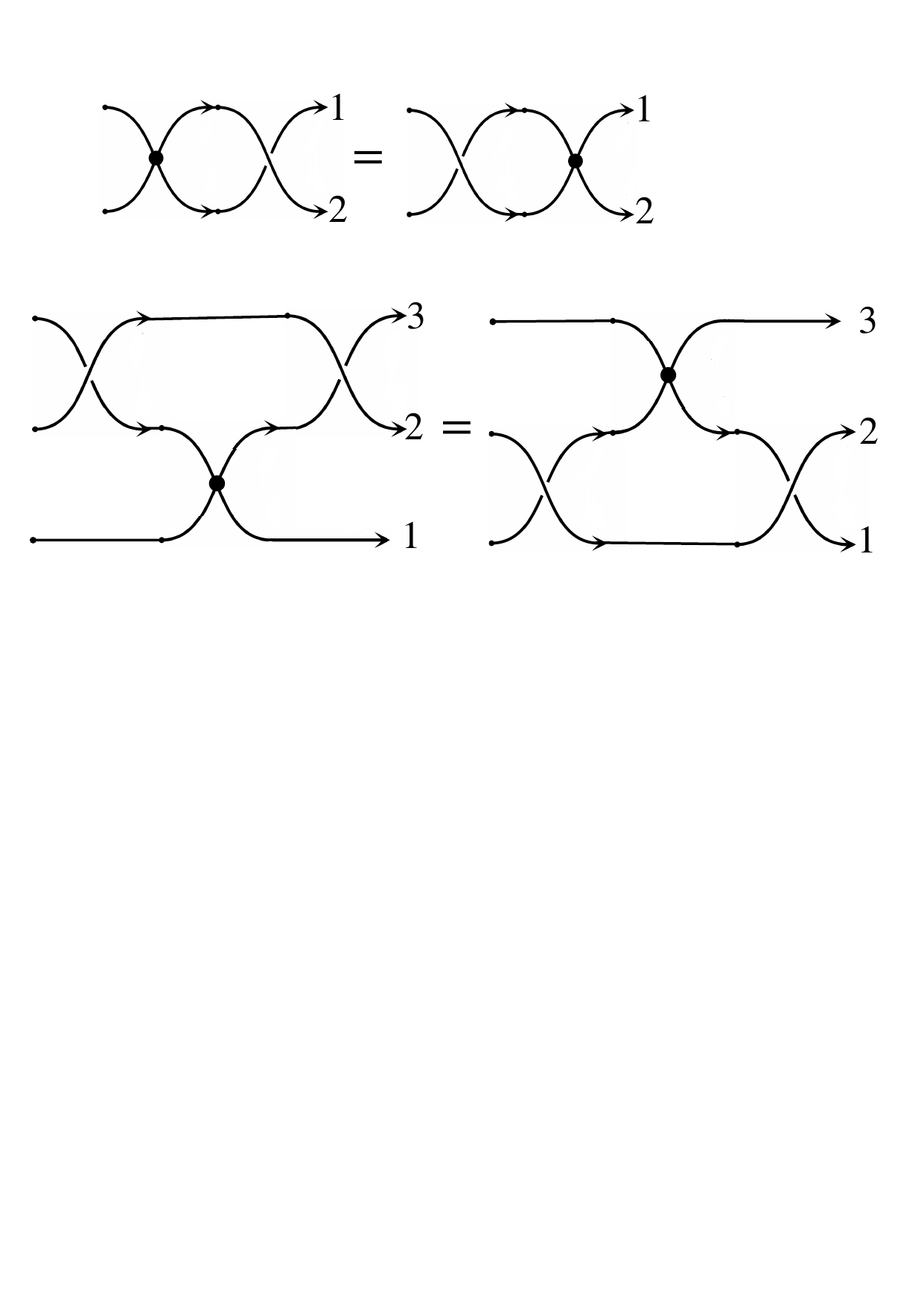}
         \caption{Singular Reidemeister III in eq.~\eqref{eq:sing_RM_rel2}.}
         \label{fig-knots:sRM3}
     \end{subfigure}
        \caption{}
        \label{fig-knots:RMmoves}
\end{figure}

\subsection{Singular gamma calculus}

We start by recalling the structure behind the $\Gamma$-calculus; see Selmani-Bar-Natan~\cite{bar2015balloons,vo2018alexander,bar2013meta} and ~\cite{mashaghi2021polynomial} for more background. 
Given a finite set $S$ (of strand labels), we consider quadratic polynomials in variables $r_i$, $c_i$ with coefficients in a field $F$, where $i$ runs through $S$.
Denote by $P_S$ the set of all pairs $(\omega,A)$ where $A = \sum_{i,j\in S} r_i A_{ij}c_j$ and $A_{ij},\omega \in F$. 

On the sets $P_S$, we construct two maps called disjoint union and merging as follows:
If $S$ and $S'$ are disjoint
we have a map $\sqcup:P_S\times P_{S'}\to P_{S\cup S'}$ defined by $(\omega,A)\sqcup (\omega',A') = (\omega \omega', A+A')$. 
To save space, we will use the shorthand notation $XY = X\sqcup Y$.
If $i,j\in S$ and $k\notin S\setminus\{i,j\}$, we also define $m^{ij}_k:P_S\to P_{S\setminus \{i,j\}\cup \{k\}}$ by
\begin{equation}
\label{eq.mijk}
m^{ij}_k(\omega,A) =\left(\omega \left( 1 - A_{ij}\right), A + \frac{A_{i,\bullet}A_{\bullet,j}}{1-A_{ij}}\right)\eval_{\substack{r_i,c_j \mapsto 0 \\ r_j, c_i \mapsto r_k, c_k}}    
\end{equation}
where we used the notation
$A_{i,\bullet} = \sum_{j}A_{ij}c_j$ and $A_{\bullet,j}=\sum_{i}r_iA_{ij}$.
For later use, we record a lemma for speeding up the computation of $\Gamma$ by merging several strands simultaneously.

 
\begin{lemma}\label{lemma:bulk_closure}
Suppose $g:T\to S$ is a bijection of finite sets. 
\[
m^{(g(s_1),g(s_2),\dots),(s_1,s_2,\dots)}_{s_1,s_2\dots}(\omega_1,\sum_{i,j\in S}A_{i,j}r_ic_j)(\omega_2,\sum_{m,n\in T}B_{m,n}r_mc_n) = 
(\omega_1\omega_2,\sum_{i,j,k\in T}B_{i,k}A_{g(k),g(j)}r_ic_{j} )
\]
Also,
\[
m^{(1,2,3,\dots n)}_1 (\omega,\sum_{ij} r_iA_{ij}c_j) = (\omega\det(I-A_{ij}|_{i<n, 1<j}),Zr_1c_1) 
\]
for some scalar $Z$.
\end{lemma}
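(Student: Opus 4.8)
My plan is to treat the two displayed identities separately, since the first is a \emph{pivot-free} gluing of two disjoint diagrams while the second is the determinantal closure of a single chain.

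\emph{First identity.} After the disjoint union the pair is $(\omega_1\omega_2,M)$ with $M=\sum_{p,q\in S}A_{pq}r_pc_q+\sum_{p,q\in T}B_{pq}r_pc_q$; regarded as a matrix over $S\sqcup T$ this is block diagonal, the two cross blocks (rows in $S$, columns in $T$, and vice versa) being zero. The merging $m^{(g(s_\ast),\dots),(s_\ast,\dots)}_{s_\ast,\dots}$ performs, for each $s\in T$, the elementary merge $m^{g(s),s}_{s}$, whose pivot in \eqref{eq.mijk} is $1-M_{g(s),s}$. I would first note that $M_{g(s),s}$ sits in the (zero) $S\times T$ cross block, so this pivot is $1$ and $\omega$ is unchanged at that step. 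The crux is that this persists through all the merges. For that I would prove the invariant that every monomial produced by a merge correction has its $r$-index in $T$ and its $c$-index in $S$ (up to the relabelling $c_{g(s)}\mapsto c_s$): indeed $M_{g(s),\bullet}M_{\bullet,s}$ factors as (row $g(s)$, which only $A$ populates, supplying the $c$-part) times (column $s$, which only $B$ populates, supplying the $r$-part). Since a would-be pivot entry $M_{g(s'),s'}$ needs an $r$-index in $S$ and a $c$-index in $T$ --- the \emph{opposite} bipartite type --- no correction ever lands on a later pivot. Hence all pivots stay $1$, $\omega=\omega_1\omega_2$, and the corrections do not compound.

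\emph{First identity, conclusion.} With independence of the corrections in hand I can add them simultaneously, $M\mapsto M+\sum_{s\in T}M_{g(s),\bullet}M_{\bullet,s}$, and then apply the single substitution $r_{g(s)},c_s\mapsto 0$, $c_{g(s)}\mapsto c_s$. The original $A$- and $B$-terms all die (each carries a killed $r_{g(s)}$ or $c_s$), and reindexing $c_{g(p)}\mapsto c_p$ on the surviving sum gives $\sum_{i,j,k\in T}B_{i,k}A_{g(k),g(j)}r_ic_j$, as claimed. The only place that needs care is exactly this bipartite-type bookkeeping, which is what prevents the usual geometric-series (resolvent) correction from appearing.

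\emph{Second identity.} Here $m^{(1,\dots,n)}_1$ is a chain of $n-1$ elementary merges collapsing strands $1,\dots,n$ onto the single label $1$, joining the end of strand $i$ to the start of strand $i{+}1$. That the output is $Zr_1c_1$ is automatic: each $m^{ij}_k$ stays inside $P_{S'}$, so once only the label $1$ survives the quadratic part can only be a scalar multiple of $r_1c_1$. The content is the $\omega$-factor, which by \eqref{eq.mijk} is the product of the successive pivots $\prod_{m}\bigl(1-A^{(m-1)}_{i_m,j_m}\bigr)$, $A^{(m)}$ being the matrix after $m$ merges. I would identify this product with $\det\!\bigl(I-A_{ij}\big|_{i<n,\,1<j}\bigr)$ by pairing row $i$ with column $i{+}1$ and forming the $(n-1)\times(n-1)$ matrix $N$ with $N_{p,q}=\delta_{pq}-A_{p,q+1}$, so that $N=I-A|_{i<n,1<j}$ in the stated reading. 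The key computation is that each merge update $A\mapsto A+\tfrac{A_{i,\bullet}A_{\bullet,j}}{1-A_{ij}}$ together with its relabelling is \emph{exactly} one step of Schur-complement elimination on $N$: the first pivot is $N_{11}=1-A_{1,2}$, and after the merge the entry governing the next pivot becomes $A_{2,3}+\tfrac{A_{2,2}A_{1,3}}{1-A_{1,2}}$, whose $1-(\cdot)$ is precisely the Schur complement entry $N_{22}-N_{21}N_{11}^{-1}N_{12}$. Running this by induction on $n$ --- base case $n=2$ giving $1-A_{12}=\det[\,1-A_{12}\,]$, inductive step a single cofactor/Schur expansion relating the $(n-1)\times(n-1)$ determinant to (first pivot)$\times$($(n-2)\times(n-2)$ determinant) --- shows the merge pivots are the Gaussian-elimination pivots of $N$, whose product is $\det N$.

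\emph{Main obstacle.} For the first identity the real work is the bipartite invariant forcing all pivots to equal $1$ and the corrections to be independent; for the second it is making the merge-pivot $\leftrightarrow$ Schur-pivot correspondence precise through the relabellings, i.e.\ checking that the index shift by one (row $i$ against column $i{+}1$) lines up so that the $n-1$ merge pivots are exactly the elimination pivots of $I-A$ restricted to rows $1,\dots,n-1$ and columns $2,\dots,n$.
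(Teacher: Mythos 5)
Your proof is correct, but it is not what the paper does: the paper disposes of this lemma in a single line (``Induction on the size of the index set $S$,'' with all details deferred to the Vo--Bar-Natan reference), so your argument is a genuine, self-contained alternative rather than a reconstruction of the printed proof. The two halves of your proposal each replace the outsourced induction with a structural explanation. For the gluing identity, your bipartite invariant --- every correction monomial carries an $r$-index in $T$ and (up to relabelling) a $c$-index in $S$, while a pivot entry $M_{g(s'),s'}$ needs the opposite type --- is exactly the reason all pivots equal $1$, $\omega=\omega_1\omega_2$, and the corrections may be applied simultaneously instead of one merge at a time; to make it fully airtight you should add that the relabelling $c_{g(s)}\mapsto c_s$ only ever moves $c$-labels onto already-merged elements of $T$, never onto a future pivot column $s'$, so renamed $A$-entries cannot spoil later pivots either (your $r$-index observation already rules out the corrections doing so). For the closure identity, identifying the successive pivots of eq.~\eqref{eq.mijk} with the Gaussian-elimination pivots of $N_{p,q}=\delta_{pq}-A_{p,q+1}$, via the verification that the post-merge matrix is exactly the Schur complement of $N_{11}$, is precisely the inductive step the paper leaves to the reader, and it explains why a determinant appears at all: $\det N$ is the product of the elimination pivots. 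What the paper's approach buys is brevity by citation; what yours buys is a complete proof and, as a side benefit, an unambiguous reading of the notation $\det(I-A_{ij}|_{i<n,1<j})$ as the identity minus the submatrix of $A$ on rows $1,\dots,n-1$ against columns $2,\dots,n$ --- the only reading under which the pivot correspondence (and the paper's own Example~\ref{ex:2}, where the closure factor is $1-M_{12}=st$) comes out right.
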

\begin{proof}
Induction on the size of the index set $S$. See \cite{vo2018alexander} for details.
\end{proof}

\begin{definition}[$\Gamma_\textup{s}$-calculus]\label{def:pol_gamma}
Working over the field $F = \mathbb{Q}(s,t)$ of rational functions in variables $s,t$ define for any singular tangle diagram $D$ the Gamma invariant, $\Gamma(D)\in P_{\mathcal{D}}$ as follows:
\begin{enumerate}
\item $\Gamma_\textup{s}(1_i) = (1, r_i c_i)$
    \item The values of the crossings are \begin{align}\Gamma_\textup{s}(X_{ij}^{\pm}) &= (1, r_i c_i + (1-t^{\pm1}) r_i c_j + t^{\pm1} r_j c_j) \label{eq.regularX}\\
    \Gamma_\textup{s}(X\singular_{ij}) & = (1, s \, r_i c_i + (1-st) r_i c_j + s t \, r_j c_j + (1-s) r_j c_i)\label{eq.singularX}\end{align}
    \item $\Gamma_\textup{s}(m^{ij}_k D) = m^{ij}_k(D)$.
    \item $\Gamma_\textup{s}(DD') = \Gamma_\textup{s}(D)\Gamma_\textup{s}(D')$.
    \end{enumerate}
\end{definition}

\begin{figure}[tbh]
\begin{center}
\includegraphics[width=10cm]{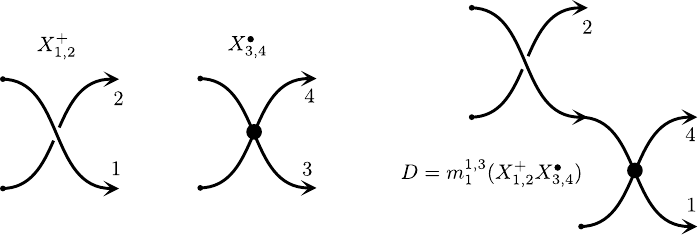}
\end{center}
\caption{}
\label{fig:example1}
\end{figure}

\begin{example}\label{ex:definition_gamma}
    For example, consider the stitching of two crossings $X^+_{1,2}$ and $X\singular_{3,4}$, such that the endpoint of strand $1$ is stitched to the starting point of strand $3$, to form the singular tangle diagram
    $D=m^{1,3}_1(X^+_{12}X\singular_{3,4})$
    shown in fig.~\ref{fig:example1}).
    The crossings are given by:
    \begin{align}
        \Gamma_\textup{s}(X_{1,2}^{+}) &= (1, r_1 c_1 + (1-t) r_1 c_2 + t \, r_2 c_2) \, , \\ 
        \Gamma_\textup{s}(X\singular_{3,4}) &= (1, s \, r_3 c_3 + (1-st) r_3 c_4 + s t \, r_4 c_4 + (1-s) r_4 c_3) \, .
    \end{align}
    The disjoint union of the two crossings is given by:
    \begin{align}
        \Gamma_\textup{s}(X_{1,2}^{+}) &\sqcup \Gamma_\textup{s}(X_{3,4}\singular) 
        = \Gamma_\textup{s}(X_{1,2}^{+} \sqcup X_{3,4}\singular) \nonumber \\
        &= \left( 1, r_1 c_1 + (1-t) r_1 c_2 + t \, r_2 c_2 + s \, r_3 c_3 + (1-st) r_3 c_4 + s t \, r_4 c_4 + (1-s) r_4 c_3 \right) \, .
    \end{align}
    The stitching $m^{1,3}_1$ gives:
    \begin{align}
        \Gamma_\textup{s}&(m^{1,3}_1 X_{1,2}^{+} X_{3,4}\singular)
        = m^{1,3}_1(X_{1,2}^{+} \sqcup X_{3,4}\singular) \nonumber \\
        &= \left( 1, s (1-t) r_1 c_2 + (1-s t) r_1 c_4 + s \, r_1 c_1 +  (s-1) (t-1) r_4 c_2 + s \, t\, r_4 c_4 + (1-s) r_4 c_1 + t\, r_2 c_2 \right)
    \end{align}
    which is the $\Gamma$-invariant of the diagram $D$ shown in fig.~\ref{fig:example1}.
\end{example}

In Definition~\ref{def:pol_gamma} the $r_i$ and $c_j$ can be seen as indicating the $i^\text{th}$ row and the $j^\text{th}$ column, thus one could present the $\Gamma_\textup{s}$-calculus also in explicit matrix representation. 
Except for changing matrices into quadratic expressions our definition of $\Gamma$ calculus is identical to those found in the literature~\cite{vo2018alexander}. 
To extend the calculus to include singular tangles, all one needs to do is provide an appropriate value for the singular crossing $X\singular$.
Our choice of $\Gamma(X\singular)$ shown in eq.~\eqref{eq.singularX} was obtained by starting with undetermined coefficients and solving them using the singular Reidemeister moves. 
We set 
\begin{equation}
\Gamma_\textup{s}(X\singular_{a,b}) = (1, s \, r_a c_a + f \, r_a c_b + g \, r_b c_a + h \, r_b c_b)\, ,
\end{equation}
and solve for the coefficients using the singular Reidemeister moves in eq.~\eqref{eq:sing_RM_rel1-ex} and~\eqref{eq:sing_RM_rel2-ex} (illustrated in figs.~ \ref{fig-knots:sRM2},~\ref{fig-knots:sRM3} respectively):
\begin{align}
    \Gamma_\textup{s}\left(m_{1,2}^{(1,2),(4,3)} X\singular_{1,2} X_{3,4}^+ \right) &= \Gamma_\textup{s}\left( m_{1,2}^{(1,2),(4,3)}X_{1,2}^+ X\singular_{3,4}\right)  \label{eq:sing_RM_rel1-ex} \\
    \Gamma_\textup{s}\left( m^{(1,2,3),(4,5,6)}_{1,2,3} X^{-}_{2,1} X\singular_{4,3} X^{+}_{5,6} \right) &= \Gamma_\textup{s}\left( m^{(1,2,3),(4,5,6)}_{1,2,3} X^{+}_{2,3} X\singular_{1,6} X^{-}_{5,4} \right) \label{eq:sing_RM_rel2-ex}
\end{align}
The two singular Reidemeister moves above provide, respectively, four and nine equations for the coefficients.
Solving them leaves one free variable:
$$ s = s \, , \qq{} f = 1 - s t \, , \qq{} g = 1 - s \,, \qq{} h = s t \, .$$
The coefficient $s$ can be chosen freely. 
We thus get the rule introduced for the singular crossing in Definition \ref{def:pol_gamma}.
The code used in this proof can be found on Github~\footnote{\href{https://github.com/MartineSchut/SingularKnots}{https://github.com/MartineSchut/SingularKnots}\label{footnote:github-knots}}.
We conclude that $\Gamma$ calculus yields an invariant of singular tangles. 
\begin{lemma}
$\Gamma(D)$ does not depend on the way $D$ was built from elementary tangles by merges and disjoint unions. Also, it is invariant under the singular Reidemeister moves
and invariant up to multiplication by $\pm t^{\pm}$ with respect to Reidemeister I.
\end{lemma}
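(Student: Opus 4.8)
The plan is to establish the three assertions separately, leaning on the fact that the merge formula~\eqref{eq.mijk} and the disjoint union are taken verbatim from the non-singular theory, so that only the singular crossing value~\eqref{eq.singularX} carries genuinely new content. First I would record the purely algebraic properties of the two operations on the sets $P_S$: the disjoint union $\sqcup$ is manifestly commutative and associative, since $(\omega,A)\sqcup(\omega',A')=(\omega\omega',A+A')$ merely adds the quadratic forms and multiplies the scalars. The substance lies in the merge, where I would verify that $m^{ij}_k$ and $m^{pq}_r$ commute whenever $\{i,j\}$ and $\{p,q\}$ are disjoint, and that a merge commutes with a disjoint union on strands it does not touch. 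These ``meta-associativity'' identities are precisely the content of Lemma~\ref{lemma:bulk_closure} and are proved in~\cite{vo2018alexander}; granting them, any two sequences of $\sqcup$'s and $m$'s producing the same underlying graph are related by a chain of such commutations, so $\Gamma_\textup{s}(D)$ depends only on $D$ and not on the order of assembly.

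For Reidemeister invariance I would split into the non-singular and the singular moves. The moves~\eqref{eq:RM1}--\eqref{eq:RM3} involve only the values~\eqref{eq.regularX}, which coincide with the classical $\Gamma$-calculus, so their invariance (with Reidemeister~I holding only up to a unit) is inherited directly from~\cite{bar2015balloons,vo2018alexander}. The new cases are the singular moves~\eqref{eq:sing_RM_rel1} and~\eqref{eq:sing_RM_rel2}. Here the key observation is that the value~\eqref{eq.singularX} was not guessed but solved for: writing $\Gamma_\textup{s}(X\singular_{a,b})=(1,s\,r_ac_a+f\,r_ac_b+g\,r_bc_a+h\,r_bc_b)$ and imposing the two displayed identities yields a rational system whose unique solution, up to the free parameter $s$, is $f=1-st$, $g=1-s$, $h=st$. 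Thus invariance under the singular moves holds by construction; the verification reduces to applying~\eqref{eq.mijk} to each side of~\eqref{eq:sing_RM_rel1-ex} and~\eqref{eq:sing_RM_rel2-ex} and checking equality of the resulting pairs, which the accompanying symbolic computation confirms.

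It remains to pin down the Reidemeister~I anomaly. For this I would evaluate $m^{1,2}_1X^{\pm}_{1,2}$ and $m^{2,1}_1X^{\pm}_{1,2}$ using~\eqref{eq.mijk}: the self-merge of a single crossing collapses the quadratic form to $r_1c_1$, while the scalar $\omega$ picks up the factor $1-A_{12}$ (respectively $1-A_{21}$), which for the crossing values~\eqref{eq.regularX} evaluates to a monomial of the form $\pm t^{\pm}$. This exhibits the claimed discrepancy and shows it is the only obstruction to full invariance under Reidemeister~I.

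The main obstacle I anticipate is not the singular moves---those form a bounded, mechanical check already settled by the coefficient computation---but rather making the build-independence argument airtight: one must ensure the commutation identities for the merge are strong enough to connect any two assembly orders of a fixed diagram, including the simultaneous merges $m^{i,j}_k$ and the iterated self-merges $m^{i}_k$. I would address this by reducing every assembly to a normal form (all disjoint unions first, then all merges in a fixed order) and invoking Lemma~\ref{lemma:bulk_closure} to justify each reordering step.
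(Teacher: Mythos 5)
Your proposal is correct and follows essentially the same route as the paper: the paper likewise inherits well-definedness and invariance under the non-singular moves from the classical $\Gamma$-calculus literature~\cite{bar2015balloons,vo2018alexander}, and treats the singular moves as holding by construction, since the coefficients in eq.~\eqref{eq.singularX} were obtained by solving the equations imposed by eqs.~\eqref{eq:sing_RM_rel1-ex} and~\eqref{eq:sing_RM_rel2-ex} (with the verification done symbolically in the linked code). Your explicit evaluation of the Reidemeister~I anomaly via eq.~\eqref{eq.mijk} and the normal-form argument for build-independence supply details the paper leaves to the references, but they refine rather than depart from the paper's argument.
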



\begin{lemma}\label{lemma:idk}
We say $(\omega,A)\in P_S$ is tangle-like if $A$ satisfies $\sum_{i\in S} A_{i,j} = 1$ for all $j\in S$.
In the matrix representation, this would mean that the sum of the row-coefficients equals unity.
If $(\omega,A)$ and  $(\omega',A')$ are tangle-like then so are $m^{ij}_{k}(\omega,A)$ and  $(\omega,A)(\omega',A')$. 
In other words, the property is conserved under the merging and disjoint union operations.
\end{lemma}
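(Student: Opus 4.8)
The plan is to check the two operations separately, noting first that tangle-likeness is a condition on the matrix $A$ alone, so the $\omega$-component plays no role. For the disjoint union I would simply record the block structure: on $S\sqcup S'$ the coefficient matrix of $(\omega,A)\sqcup(\omega',A')=(\omega\omega',A+A')$ satisfies $(A+A')_{ab}=A_{ab}$ when $a,b\in S$, $(A+A')_{ab}=A'_{ab}$ when $a,b\in S'$, and $(A+A')_{ab}=0$ when $a,b$ lie in different blocks (since $A$ only involves $S$-indices and $A'$ only $S'$-indices). Hence for a column $b\in S$ the full column sum collapses to $\sum_{a\in S}A_{ab}=1$, and symmetrically for $b\in S'$, so disjoint union preserves the property immediately.

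The merging case is the real content. Expanding $A_{i,\bullet}A_{\bullet,j}=\sum_{a,b}r_a A_{aj}A_{ib}c_b$, the coefficient matrix of $A+\tfrac{A_{i,\bullet}A_{\bullet,j}}{1-A_{ij}}$ \emph{before} the evaluation is
\[
\tilde A_{ab}=A_{ab}+\frac{A_{aj}A_{ib}}{1-A_{ij}},\qquad a,b\in S.
\]
I would then track the substitution $r_i,c_j\mapsto 0$, $r_j,c_i\mapsto r_k,c_k$ carefully: it deletes row $i$ and column $j$ of $\tilde A$, renames the surviving row $j$ to $k$ and the surviving column $i$ to $k$, and fixes all other indices. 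Thus the new matrix $B$ on $S'=(S\setminus\{i,j\})\cup\{k\}$ has, for each surviving column (indexed by some $c\in S\setminus\{j\}$, where $c=i$ corresponds to the new label $k$), the column sum
\[
\sum_{a'\in S'}B_{a',c}=\sum_{a\in S\setminus\{i\}}\tilde A_{a,c}=\Big(\sum_{a\in S}\tilde A_{a,c}\Big)-\tilde A_{i,c}.
\]

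The crux is a short simplification of this difference. Summing the definition of $\tilde A$ over $a$ and using the hypotheses $\sum_a A_{a,c}=1$ and $\sum_a A_{a,j}=1$ gives $\sum_{a\in S}\tilde A_{a,c}=1+\tfrac{A_{i,c}}{1-A_{ij}}$, while factoring the single entry gives $\tilde A_{i,c}=A_{i,c}\big(1+\tfrac{A_{ij}}{1-A_{ij}}\big)=\tfrac{A_{i,c}}{1-A_{ij}}$; the two contributions cancel, leaving the column sum equal to $1$ for every surviving column $c$, so $m^{ij}_k(\omega,A)$ is tangle-like. The main obstacle I anticipate is purely the bookkeeping of the evaluation, namely identifying which row and column are deleted versus relabelled and verifying that the new label $k$ (which inherits old row $j$ and old column $i$) is covered by the same computation as the untouched columns; the denominator $1-A_{ij}$ is nonzero precisely where the merge is defined, so no additional care is needed there. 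Combined with the elementary observation that $1_i$, $X^{\pm}_{ij}$ and $X\singular_{ij}$ are each tangle-like, this shows every $\Gamma_\textup{s}(D)$ is tangle-like.
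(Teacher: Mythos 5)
Your proposal is correct and is exactly the argument the paper has in mind: the paper's own proof consists of the single sentence that the lemma ``follows directly from eq.~\eqref{eq.mijk}'', and your computation (block structure for disjoint union; the cancellation $\sum_{a}\tilde A_{a,c}-\tilde A_{i,c} = \bigl(1+\tfrac{A_{ic}}{1-A_{ij}}\bigr)-\tfrac{A_{ic}}{1-A_{ij}}=1$ for merging, with the relabelled row/column handled uniformly) is precisely the verification the paper leaves to the reader.
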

\begin{proof}
This follows directly from eq.~\eqref{eq.mijk}.
\end{proof}
\begin{lemma}
For any long singular knot $K$ with strand $i$ we have $\Gamma_\textup{s}(K) = (\omega, r_i c_i)$ for some $\omega = \omega(K)\in \mathbb{Q}(s,t)$.
\end{lemma}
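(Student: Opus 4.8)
The plan is to deduce this as an immediate consequence of the tangle-like property recorded in Lemma~\ref{lemma:idk}. Since $K$ is a long singular knot it has a single strand, so once all merges have been carried out $\Gamma_\textup{s}(K)$ lives in $P_{\{i\}}$; its quadratic part is therefore forced to take the shape $A = A_{ii}\, r_i c_i$ for a single scalar $A_{ii}\in\mathbb{Q}(s,t)$. The entire content of the lemma is thus the claim that this lone coefficient equals $1$, i.e.\ $A_{ii}=1$. I would obtain exactly this from tangle-likeness: for $S=\{i\}$ the defining condition $\sum_{p\in S}A_{p,i}=1$ collapses to the single equation $A_{ii}=1$.

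First I would check that every elementary diagram appearing in Definition~\ref{def:pol_gamma} is tangle-like, which serves as the base case. For the crossingless strand $1_i$ the only column is $i$ and its coefficient is $1$. For the regular crossings in eq.~\eqref{eq.regularX} the column-$i$ coefficient is $1$ (only the $r_i c_i$ term contributes), while the column-$j$ coefficients sum to $(1-t^{\pm1})+t^{\pm1}=1$. For the singular crossing in eq.~\eqref{eq.singularX} the column-$i$ coefficients sum to $s+(1-s)=1$ and the column-$j$ coefficients sum to $(1-st)+st=1$. Hence all four generators are tangle-like.

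Next, since any singular tangle diagram --- and in particular $K$ --- is assembled from these generators using only disjoint union and merging, I would invoke the closure statement of Lemma~\ref{lemma:idk}: tangle-likeness is preserved by both operations. Propagating the property through the construction of $K$ shows that $\Gamma_\textup{s}(K)$ is tangle-like, and by the reduction above this yields $A_{ii}=1$, i.e.\ $\Gamma_\textup{s}(K)=(\omega, r_i c_i)$. That $\omega=\omega(K)\in\mathbb{Q}(s,t)$ is automatic, since each application of eq.~\eqref{eq.mijk} multiplies the scalar $\omega$ by a factor $1-A_{ij}\in\mathbb{Q}(s,t)$ and the disjoint union multiplies two such scalars, so $\omega$ never leaves the field.

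I do not expect a genuine obstacle here; the statement is essentially a corollary of Lemma~\ref{lemma:idk} once one observes that a single strand leaves only one column to constrain. The only point requiring care is the bookkeeping: confirming that the base-case verification covers all generators (including both signs of the regular crossing) and that the well-definedness established earlier guarantees that the intermediate merges in eq.~\eqref{eq.mijk} are all legitimate, so that each factor $1-A_{ij}$ entering $\omega$ is itself a well-defined element of $\mathbb{Q}(s,t)$.
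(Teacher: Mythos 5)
Your proposal is correct and follows essentially the same route as the paper: the paper's proof is precisely the observation that $\Gamma_\textup{s}(1_i)$, $\Gamma_\textup{s}(X^\pm_{ij})$ and $\Gamma_\textup{s}(X\singular_{ij})$ are all tangle-like, so that Lemma~\ref{lemma:idk} propagates the property through disjoint unions and merges, forcing $A_{ii}=1$ on a single strand. Your write-up merely makes explicit the column-sum verifications and the one-column reduction that the paper leaves implicit.
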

\begin{proof}
This follows follows from Lemma~\ref{lemma:idk} because $\Gamma_\textup{s}(X^\pm_{ij})$ and $\Gamma_\textup{s}(X\singular_{ij})$ and $\Gamma_\textup{s}(1_i)$ are all tangle-like. 
\end{proof}
In fact, we will later show that $\omega(K)\in \mathbb{Z}[t,s,t^{-1}]$.
%
%
There is an ambiguity of $\pm t^{k}$, $k\in\mathbb{Z}$ in the scalar part. 
This is because gamma calculus is not strictly invariant under the first Reidemeister move. We have to multiply by either $1$ or $t^{\pm}$. 
The extended invariant in sec.~\ref{sec:perturbed} solves this ambiguity.

\begin{figure}[thb]
    \centering
    \includegraphics[width=0.7\linewidth]{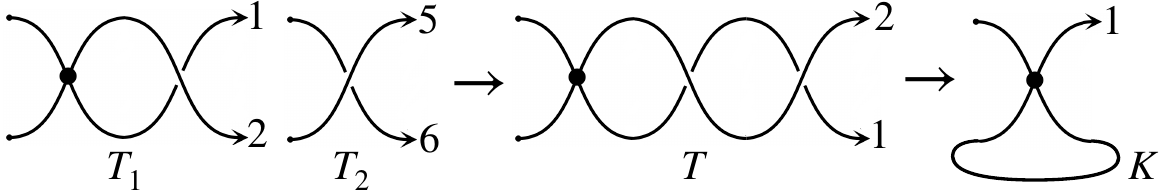}
    \caption{Diagrams corresponding to Example~\ref{ex:2}.}
    \label{fig-knots:ex2}
\end{figure}

\begin{example}\label{ex:2}
    As an example of Lemma~\ref{lemma:bulk_closure} we start with two tangle diagrams (depicted in fig.~\ref{fig-knots:ex2}): $T_1 = m^{(1,2),(4,3)}_{1,2} X\singular_{1,2} X_{3,4}^+$ and $T_2 = X_{5,6}^-$, and merge them together into $T = m^{(1,2),(6,5)}_{1,2}(T_1T_2)$. 
    Next, we find the $\Gamma_\textup{s}$ of the corresponding long knot $K=m^{1,2}_1(T)$.
    \begin{align}
        \Gamma_\textup{s}(T_1) &\equiv \left(1, \sum_{i,j\in S} A_{i,j} r_i c_j \right) = \left(1,r_2 \left(c_2 \left(s t^2-t+1\right)+c_1 (1-s t)\right)+r_1 \left(c_1 s t-c_2 t (s t-1)\right)\right) \, ,\\
        \Gamma_\textup{s}(T_2) &\equiv \left(1, \sum_{i,j\in T} B_{i,j} r_i c_j \right) = \left(1,r_5 c_6 \left(1-t^{-1}\right)+ t^{-1} r_6 c_6 +r_5 c_5\right)\, .
    \end{align}
    With the finite sets defined as $S=\{1,2\}$ and $T=\{5,6\}$.
    From Lemma~\ref{lemma:bulk_closure} we find that the composition of these two tangles. 
    Note that the function $g$ in Lemma~\ref{lemma:bulk_closure} is $(6,5)\mapsto(1,2)$.
    The composition of the two tangles via $m^{(1,2),(6,5)}_{1,2}$ is found:
    \begin{align}
        \Gamma_\textup{s}(T) 
        & = \Gamma_\textup{s}\left(m^{(1,2),(6,5)}_{1,2} m^{(1,2),(4,3)}_{1,2} X\singular_{1,2} X_{3,4}^+ X_{5,6}^- \right)
        = \left( 1\cdot 1, \sum_{i,j,k\in T} B_{i,k} A_{g(k),g(j)} r_i c_j \right) \\
        &= \left. \left( 1, \sum_{j\in(1,2)} B_{5,6} A_{1,j} r_2 c_j + B_{6,6} A_{1,j} r_1 c_j + B_{5,5} A_{2,j} r_2 c_j \right) \right\vert_{(6,5)\mapsto(1,2)} \\
        &= \left(1, (1 - s t) r_1  c_2 + s \, r_1 c_1 + s\,t\, r_2 c_2 + (1 - s) r_2 c_1 \right) = (1,\sum_{i,j} r_i M_{ij} c_j) \, .
    \end{align}
    Now closing the braid to form a long knot via the stitching $m^{1,2}_1$ using Lemma~\ref{lemma:bulk_closure} gives:
    \begin{equation}
        \Gamma_\textup{s}(K) = \left( 1 \cdot \det(1 - M_{ij}|_{i<2,j>1}), Z r_1 c_1 \right) = \left( s\, t, s\, r_1 c_1 \right) \, .
    \end{equation}
    In this simple case, $1-M_{12}= s\,t$ and the factor $Z=s$ can be found using the stitching in eq.~\eqref{eq.mijk}.
    The scalar part of $\Gamma_\textup{s}$ will later be shown to be the Alexander polynomial, $\omega(K) = \Delta_K$, up to a factor $\pm t^{k}$, $k\in\mathbb{Z}$. 
    Taking the limit $s=1$, which corresponds to replacing the singular crossing with a positive crossing, we see that $\Gamma_\textup{s} = (t,r_1 c_1)$.
    The diagram becomes that of the unknot; this example illustrates that the scalar part of the invariant contains a factor $t$ due to the first Reidemeister move only being invariant up to a factor $t^{k}$.
\end{example}

\section{Connection to previous work}
Previous works on singular knot invariants include a singular Burau representation~\cite{gemein2001representations} and singular Skein relations for the Kauffman state sum~\cite{fiedler2010jones,ozsvath2009floer}.
These works connect their singular knot invariants to the singular Alexander polynomial.
In this section, we show the relation between the aforementioned invariants and the invariant from the $\Gamma_s$-calculus, thus verifying that it produces the singular Alexander polynomial.

\subsection{Burau representation}\label{subsec:gemein}
\subsubsection{Singular braids}
To describe singular braid diagrams, we use a pair $(D,\pi)$ where $D$ is a singular tangle diagram with $\mathcal{L}(D) = \{1,2\dots n\}$ and $\pi\in S_n$ is a permutation. 
For example, $(D,\pi)$ on the top left of fig.~\ref{fig.SingBr} is a five-strand singular braid diagram.

\begin{figure}[tph!]
\begin{center}
\includegraphics[width=10cm]{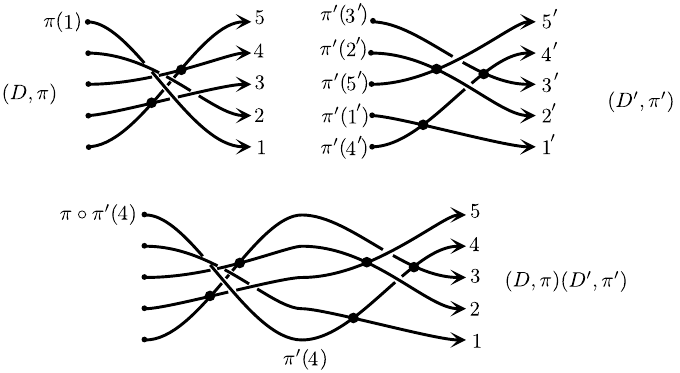}
\end{center}
\caption{Two 5-strand singular braid diagrams $(D,\pi)$ and $(D',\pi')$ and their composition $(D,\pi)(D',\pi')$.}
\label{fig.SingBr}
\end{figure}

Our convention is to number the heads of the strands of the braid diagram $1,2,3\dots, n$ starting at the bottom right. 
The permutation $\pi$ tells us the height of the tail of strand $i$; it is $\pi(i)$. 
In the example shown in fig.~\ref{fig.SingBr}, we showed that $\pi(1) = 5$.

Composing singular braid diagrams is done by concatenating them as shown at the bottom of fig.~\ref{fig.SingBr}. 
In terms of our singular tangle notation, we can describe this process as follows:
Whenever we have two pairs $(D,\pi)$ and $(D',\pi')$ where $\mathcal{L}(D)=\mathcal{L}(D') = \{1,\dots n\}$ and $\pi,\pi'\in S_n$ are permutations, define 
\begin{equation}\label{eq.braidcomp}
    (D,\pi)(D',\pi') = (m^{(\pi'(1),\pi'(2),\dots, \pi'(n)),(1',2',\dots, n')}_{(1\dots n)}(DD'),\pi\circ \pi') \, .
\end{equation} 
%
%
Here, we temporarily relabel the strands of $D'$ to be $1',2'\dots$ to avoid confusion.


\begin{theorem}[Alexander's theorem, singular version]\label{thm:alexander_thm}
For any (singular) knot $K$, there is a (singular) braid $b$ such that $K$ is isotopic to $K_b$.
\end{theorem}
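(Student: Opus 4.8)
The plan is to adapt Alexander's classical braiding argument to the singular setting. The key observation that makes this work is that a singular crossing behaves exactly like an ordinary crossing for the purposes of braiding: at $X\singular_{ij}$ both strands enter from below and leave above (cf.\ fig.~\ref{fig:crossings}), so the two strands are coherently oriented, just as at $X^\pm_{ij}$. First I would fix a generic diagram $D$ of the singular knot $K$ in the plane and choose a basepoint $p$ (the projection of the braid axis) lying in a complementary region of $D$. Using the angular coordinate $\theta$ around $p$, a diagram is a closed braid precisely when every strand is oriented so that $\theta$ increases monotonically as one traverses it; the goal is therefore to isotope $D$ into such a form without ever disturbing its singular double points.

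Next I would subdivide $D$ into finitely many short arcs, chosen so that each arc contains no crossing (regular or singular) in its interior and is monotone in $\theta$. Call an arc \emph{good} if $\theta$ increases along it and \emph{bad} otherwise. For each bad arc $\alpha$ I would apply Alexander's move: replace $\alpha$ by an arc running the long way around $p$ in the increasing direction, pushing this replacement arc entirely over (or entirely under) every strand it meets. Because $\alpha$ is crossing-free by construction, this move is a legitimate isotopy of \emph{singular} knots — it is realized by a planar isotopy together with Reidemeister II/III-type moves that slide a crossing-free arc across the diagram, and it never requires modifying or ``unpinning'' any singular vertex. Each application removes a bad arc while introducing only good arcs, so after finitely many moves the diagram consists entirely of good arcs.

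At that stage every strand winds monotonically around $p$, and cutting the plane along a ray emanating from $p$ that meets $D$ transversally turns $D$ into a singular braid diagram $b$. Its elementary pieces are precisely the positive, negative, and singular crossings $X^\pm_{ij}$ and $X\singular_{ij}$ of Definition~\ref{def:pol_gamma}, all of which have coherently oriented strands compatible with the braid direction. By construction the braid closure $K_b$ is obtained by rejoining along the cut ray, so $K_b$ is isotopic to the original $K$, which is exactly the assertion of the theorem.

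I expect the main obstacle to be the careful verification that each Alexander move on a bad arc is genuinely an isotopy of singular knots, i.e.\ that it preserves all transverse double points as rigid four-valent vertices. This reduces to checking that pushing a crossing-free arc over (or under) the rest of the diagram invokes only moves that leave singular vertices intact — which is the content of the ordinary and singular Reidemeister moves recorded in eqs.~\eqref{eq:RM1}--\eqref{eq:sing_RM_rel2} — together with the observation that one may always refine the subdivision so that both the arc being moved and the local strands it slides past are free of singular crossings. Once this local compatibility is established, the global termination and the read-off of the braid word proceed exactly as in the classical case.
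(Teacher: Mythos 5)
Your proposal is correct and takes essentially the same route as the paper: the paper's entire proof of Theorem~\ref{thm:alexander_thm} is a citation to Alexander's original braiding argument~\cite{alexander1923lemma}, which is precisely the throw-the-bad-arcs procedure you carry out, augmented by the (standard, Birman-style) observation that the moves can be performed by ordinary and singular Reidemeister moves without disturbing the rigid singular vertices. In effect, your write-up supplies the singular-crossing details that the paper leaves implicit in its citation.
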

\begin{proof}
    See~\cite{alexander1923lemma}. 
\end{proof}
It is known that two knots are equivalent if their corresponding long knots are equivalent~\cite{murasugi2012study}.
Therefore, we study braids and their closure to form a long knot. 

The Burau representation is a representation of the braid group $\mathcal{B}: B_n\to GL(n,Q(t))$, defined by 
\begin{equation}
    \mathcal{B}(\sigma_i) = \left(\begin{array}{cccc}I_{i-1} & 0 & 0 & 0\\
0& 1-t & t &0 \\ 0& 1 & 0 &0\\0&0&0&I_{n-i-1}\end{array}\right) \, , 
\end{equation}
where $I_k$ denotes the identity matrix of size $k$ and $\sigma_i$ the positive crossing generator on a strand labelled $i$.
The Burau representation is related to the Alexander polynomial of the braid closure by $\Delta(K_{b}) = \det_1(1-\mathcal{B}(b))$~\cite{murasugi2012study,BurdeZieschang}; the subscript $1$ denoting the removal of the first row and column.
In ref.~\cite{gemein2001representations} Gemein extended the braid group to a monoid of singular braids, $SB_n$, by introducing a new generator $\sigma\singular$.
They extended the Burau representation to a monoid homomorphism $\mathcal{B}:SB_n\to \mathrm{Mat}(n,\mathbb{Q}(t))$ by setting 
\begin{equation}
\mathcal{B}(\sigma_i\singular)=\left(\begin{array}{cccc}I_{i-1} & 0 & 0 & 0\\
0& 1-st & st & 0 \\ 0& s & 1-s& 0 \\0&0&0&I_{n-i-1}\end{array}\right) \, . 
\end{equation}
They showed that any singular knot can be presented as a closure $K_{b}$ of a singular braid $b$, and they defined the singular Alexander polynomial by
$G(K_{b}) = \det_1(1-\mathcal{B}(b))$.

\begin{theorem}
If $b = (D,\pi)$ is a singular braid and $\mathcal{B}_{ij}$ are the matrix entries of the Burau matrix of $b$, then 
\begin{equation}
    \Gamma_\textup{s}(D) = \left(1,\sum_{i,j}r_{i} \mathcal{B}_{j,i} c_{\pi^{-1}(j)} \right)
\end{equation}
\end{theorem}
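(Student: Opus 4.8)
The plan is to prove the identity by induction on the number of crossings of the braid $b$, exploiting the fact that both sides are multiplicative under braid composition. It is convenient to phrase the claim matricially: writing $\Gamma_\textup{s}(D) = (1, \sum_{i,l} A_{il}\, r_i c_l)$, the asserted formula is equivalent to $A_{il} = \mathcal{B}_{\pi(l),\, i}$, i.e. the coefficient matrix $A$ is the transpose of the Burau matrix with its columns permuted by $\pi$ (substitute $l = \pi^{-1}(j)$ in the statement to see this). I would prove simultaneously that the scalar part is exactly $1$ and that this matrix relation holds, since the two statements feed into each other through the composition step.

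For the base case I would treat the trivial braid and the elementary generators. For the identity braid, $\Gamma_\textup{s}(1_1\cdots 1_n) = (1, \sum_i r_i c_i)$ while $\mathcal{B} = I$ and $\pi = \mathrm{id}$, so both sides agree. For a single positive generator $\sigma_p$ (a crossing on strands $p, p+1$ together with straight strands elsewhere) the permutation is the transposition $(p, p+1)$; substituting this into $A_{il} = \mathcal{B}_{\pi(l),\, i}$ and reading off the $2\times 2$ block of $\mathcal{B}(\sigma_p)$ reproduces exactly $r_p c_p + (1-t)\, r_p c_{p+1} + t\, r_{p+1} c_{p+1}$, which is $\Gamma_\textup{s}(X^+_{p,p+1})$ from eq.~\eqref{eq.regularX}, while the straight strands contribute the identity entries. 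The negative and singular generators are checked the same way against eqs.~\eqref{eq.regularX}--\eqref{eq.singularX}, the singular generator again inducing the transposition $(p,p+1)$. All generators have scalar part $1$.

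For the inductive step I would peel off the last generator, writing $b = (D,\pi) = (D_1,\pi_1)(D_2,\pi_2)$ via the composition rule eq.~\eqref{eq.braidcomp}, so that $D = m^{(\pi_2(1),\dots),(1',\dots)}_{(1,\dots,n)}(D_1 D_2)$ and $\pi = \pi_1 \circ \pi_2$. Applying Lemma~\ref{lemma:bulk_closure} with the reconnection bijection $g$ determined by $g(k') = \pi_2(k)$, the coefficient matrix of $\Gamma_\textup{s}(D)$ comes out as $C_{ab} = \sum_c B_{a'c'}\, A_{\pi_2(c),\, \pi_2(b)}$, where $A, B$ are the inductively known matrices of $D_1, D_2$. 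Substituting the inductive hypotheses $A_{il} = \mathcal{B}(b_1)_{\pi_1(l),\, i}$ and $B_{mn} = \mathcal{B}(b_2)_{\pi_2(n),\, m}$ and making the change of summation variable $d = \pi_2(c)$ collapses the sum to $\sum_d \mathcal{B}(b_1)_{(\pi_1\pi_2)(b),\, d}\, \mathcal{B}(b_2)_{d,\, a} = (\mathcal{B}(b_1)\mathcal{B}(b_2))_{(\pi_1\pi_2)(b),\, a}$. Since $\mathcal{B}$ is a monoid homomorphism this equals $\mathcal{B}(b)_{\pi(b),\, a}$, precisely the claimed relation for the composite. The first part of Lemma~\ref{lemma:bulk_closure} keeps the scalar part equal to $1\cdot 1 = 1$, so no spurious factor appears (in a braid one never merges a strand back to itself, which is why the $(1-A_{ij})$ factor of eq.~\eqref{eq.mijk} never contributes).

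The conceptual content is thus that the bulk-closure merging of Lemma~\ref{lemma:bulk_closure} implements matrix multiplication of Burau matrices, with the permutations recording how tails reconnect to heads. I expect the main obstacle to be purely bookkeeping: pinning down the precise dictionary between the merging superscripts in eq.~\eqref{eq.braidcomp} and the bijection $g$ of Lemma~\ref{lemma:bulk_closure}, and keeping the transpose together with the placement of $\pi$ versus $\pi^{-1}$ straight throughout. The change of variable $d = \pi_2(c)$ is the one nontrivial manipulation, and it is exactly what forces the two permutations to compose as $\pi_1 \circ \pi_2$ in agreement with eq.~\eqref{eq.braidcomp}; everything else reduces to reading off $2\times 2$ blocks and invoking the homomorphism property of the Burau representation.
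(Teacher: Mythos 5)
Your proposal is correct and takes essentially the same route as the paper's own proof: both verify the formula directly on the elementary generators and then handle composition by applying Lemma~\ref{lemma:bulk_closure} to the merging in eq.~\eqref{eq.braidcomp}, where the change of summation variable turns the resulting sum into the matrix product $\mathcal{B}(b_1)\mathcal{B}(b_2)$ with the permutations composing as $\pi = \pi_1\circ\pi_2$. Your explicit observation that the scalar part remains $1$ because a braid never merges a strand to itself (so the $1-A_{ij}$ factor of eq.~\eqref{eq.mijk} never bites) is a point the paper leaves implicit, but it is the same argument.
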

\begin{proof}
For the crossings, this can be seen from the singular braid generators in eq.~\eqref{eq:braid-gen-pos}-\eqref{eq:braid-gen-sing}, showing that $\Gamma(X_{i,i+1}) = \left(1,\sum_{i,j} r_i \mathcal{B}_{i+1,i}(\sigma_i) c_{\pi^{-1}(i+1)} \right)$ and similarly for the singular crossing and \newline \noindent $\Gamma(X_{i+1,i}^-) = \left(1,\sum_{i,j} r_i \mathcal{B}_{i+1,i}(\sigma_i^{-1}) c_{\pi^{-1}(i+1)} \right)$
For two braid $b_1$, $b_2$ with corresponding diagram and permutation:
\begin{equation}
    \Gamma_\textup{s}(D_{1,2}) = \left(\omega_{1,2},\sum_{i,j}r_{i} \mathcal{B}_{j,i}(b_{1,2}) c_{\pi_{1,2}^{-1}(j)} \right) \text{with permutation } \pi_{1,2}\,.
\end{equation}
Concatenating the two braids - the endpoints of $D_1$, $\mathcal{L}(D_1)= \{1',\ldots,n'\}$ to the beginning points of $D_2$, $\mathcal{L}(D_2)=\{1,\ldots,n\}$ - via $m^{(\pi_2(1),\pi_2(2),\ldots,\pi_2(n)),(1,2,\ldots,n)}_{(1,\ldots,n)}$ gives the braid $b=(D,\pi)$ defined Lemma~\ref{lemma:bulk_closure}:
\begin{align}
    \Gamma_\textup{s}\left(m^{(\pi_2(1),\pi_2(2),\ldots,\pi_2(n)),(1,2,\ldots,n)}_{(1,\ldots,n)}(D_1D_2)\right)
    &= \left(\omega_1\omega_2, \sum_{i,j,k} \mathcal{B}_{\pi_2(k),i}(b_2) \mathcal{B}_{\pi_1(\pi_2(j)),\pi_2(k)}(b_1) r_i c_j \right) \\
    &= \left(\omega_1\omega_2, \sum_{i,j} \left( \sum_k \mathcal{B}_{j,k}(b_1) \mathcal{B}_{k,i}(b_2) \right) r_i c_{\pi_2^{-1}\circ \pi_1^{-1}(j))} \right) \\
    &= \left(\omega_1\omega_2, \sum_{i,j} \mathcal{B}_{j,k}(b) r_i c_{\pi^{-1}(j)} \right) \, .
\end{align}
Where $\pi = \pi_1\circ \pi_2$, i.e. $\pi^{-1} = \pi_2^{-1}\circ \pi_1^{-1}$, as in eq.~\eqref{eq.braidcomp}, and $b=b_1b_2$.
The last line follows from the braid group and multiplication of the Burau representation when concatenating braids.
\end{proof}

\begin{theorem}
If $b$ is a singular braid and $K_{b}$ is its long closure, then $\Gamma_\textup{s}(K_{b}) = (G(K_{b}),Z r_1 c_1)$.
\end{theorem}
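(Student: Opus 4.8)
The plan is to combine the previous theorem (relating $\Gamma_\textup{s}(D)$ to the Burau matrix entries of $b=(D,\pi)$) with the closure formula from Lemma~\ref{lemma:bulk_closure}. The long closure $K_b = m^{1,2,\dots,n}_1(D,\pi)$ is formed by stitching strand $i$'s tail to the head sitting at height $\pi(i)$, so as a tangle operation it is the bulk merge $m^{(\pi(1),\dots,\pi(n)),(1,\dots,n)}_{1}$ applied to $D$. First I would write down $\Gamma_\textup{s}(D) = (1, \sum_{i,j} r_i \mathcal{B}_{j,i}\,c_{\pi^{-1}(j)})$ from the preceding theorem, reindex the columns by the permutation so that the closure merge identifies head $j$ with the tail it is joined to, and then apply the scalar formula from Lemma~\ref{lemma:bulk_closure}, which says the bulk self-merge multiplies the scalar part by $\det(I - A_{ij})$ restricted by deleting one row and column.

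The core of the argument is then purely a determinant bookkeeping exercise. After the permutation-induced reindexing, the quadratic form governing the closure has coefficient matrix $A$ whose $(i,j)$ entry is precisely $\mathcal{B}_{\pi(j),i}$ (or its transpose, depending on row/column conventions), so that $I - A$ equals $I - \mathcal{B}(b)$ up to a simultaneous row/column permutation by $\pi$, which does not change the determinant. The scalar output of the closure is $\det_1(I - A)$, the subscript denoting deletion of the first row and column as in Lemma~\ref{lemma:bulk_closure}. Matching this against Gemein's definition $G(K_b) = \det_1(1 - \mathcal{B}(b))$ gives the scalar part equal to $G(K_b)$. For the quadratic part, Lemma~\ref{lemma:idk} guarantees that $\Gamma_\textup{s}(K_b)$ is tangle-like with a single remaining strand, so its quadratic part is forced to be $Z\,r_1 c_1$; combined with the long-knot lemma (every long singular knot has $\Gamma_\textup{s}(K) = (\omega, r_1 c_1)$), the coefficient $Z$ must in fact equal $1$, giving exactly $(G(K_b), r_1 c_1)$.

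The step I expect to be the main obstacle is getting the index gymnastics exactly right: tracking how the permutation $\pi$ enters the closure merge, confirming that the reindexing turns the matrix of coefficients into $\mathcal{B}(b)$ rather than $\mathcal{B}(b)^{\mathsf{T}}$ or $\mathcal{B}(b)^{-1}$, and verifying that the ``delete the first row and column'' conventions in Lemma~\ref{lemma:bulk_closure} and in Gemein's $\det_1$ genuinely coincide. In particular, the Burau determinant $\det_1(1-\mathcal{B}(b))$ is independent of which row/column one deletes (a standard fact, since $1-\mathcal{B}(b)$ annihilates a natural vector coming from the tangle-like / stochastic-type condition), and I would invoke exactly this to reconcile the two conventions; the tangle-like property from Lemma~\ref{lemma:idk} is what supplies this kernel vector. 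Once the determinant identity is pinned down and the transpose/permutation ambiguities are resolved, the remaining verification that $Z=1$ is immediate from the two lemmas, so the whole proof reduces to careful linear algebra rather than any genuinely new construction.
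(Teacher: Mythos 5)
Your proposal follows essentially the same route as the paper's own proof: combine the preceding theorem (expressing $\Gamma_\textup{s}(D)$ in terms of the Burau matrix) with the closure formula of Lemma~\ref{lemma:bulk_closure}, then identify the resulting minor determinant with Gemein's $\det_1(1-\mathcal{B}(b))$ using invariance of the determinant under transpose and permutation. The additional care you take --- reconciling the row/column deletion conventions via the kernel vector supplied by the tangle-like condition, and pinning down the coefficient $Z$ using Lemma~\ref{lemma:idk} --- only fills in details that the paper's proof glosses over, rather than constituting a different argument.
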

\begin{proof}
If $b=(D,\pi)$ is a singular braid with $\Gamma_\textup{s}(D) = \left(1, \sum_{i,j} r_i A_{i,j} c_j \right)$, then its long closure is given by Lemma~\ref{lemma:bulk_closure}:
\begin{equation}
    \Gamma(K_{b}) = (\det(1-A_{ij}|_{i<n, 1<j}),Zr_1c_1) \, .
\end{equation}
Since $A_{i,j}$ is related to $\mathcal{B}_{ij}$ by taking the transpose and permuting the columns, and the determinant is invariant up to $\pm$ under a permutation of columns, and invariant under a transpose, this shows that $\Gamma_\textup{s}(K_{b}) = \left( \det_1(1-\mathcal{B}(b)), Z r_1 c_1\right) = \left(G(K_{b}), Z r_1 c_1\right)$.
\end{proof}

For singular braids, the invariant resulting from the $\Gamma_\textup{s}$-calculus is found to be equivalent to the invariant found by Gemein from the singular extension of the braid group, which he claims to be the singular Alexander polynomial. 
Thus, we name the scalar part of $\Gamma_\textup{s}$ the singular Alexander invariant and call it $\omega(K)$ for a (singular) long knot $K$ isotopic to $K_b$, given by:
\begin{equation}\label{eq:wk}
    \omega(K) = \det(1-\sum_{i,j} A_{i,j} |_{i<n,1<j} ) \, ,
\end{equation}
where $A$ is the given by $\Gamma_\textup{s}(b) = (1, \sum_{i,j} r_i A_{i,j} c_j)$ and $K_b$ is the long closure of $b=(D,\pi)$.

\subsection{Skein relations}
Ozsvath et al.~\cite{ozsvath2009floer} and Fiedler~\cite{fiedler2010jones} introduce a singular version of the Alexander polynomial, $\Delta^s(K)$, via a singular Skein relation.
$K\singular$ will denote a knot with at least a one singular crossing, $K^+$ ($K^-$) will denote the same knot where the singular crossing is replaced by a positive (negative) crossing.
Fiedler gives the Skein relation in terms of the Alexander polynomial $\Delta^s(K)$, which is assumed to have been found from the Kauffman state sum with weights such that $\Delta^s(K)\in\mathbb{Z}[A,A^{-1},B]$:
\begin{equation}\label{eq:fiedler}
    (A^{-1}-A) \Delta^s(K\singular) = (A^{-1}-B) \Delta^s(K^+) - (A-B) \Delta^s(K^-) \, .
\end{equation} 
The singular Skein relation used by Ozsvath et al. also presented in terms of the (generalized) Kauffman state sum $\Delta^s_{K}$ as a polynomial in $T,T^{-1}$:
\begin{equation}\label{eq:oszvath}
    (T^{-1/2}-T^{1/2}) \Delta(K\singular) = T^{-1/2} \Delta(K^+) - T^{1/2} \Delta(K^-) \, .
\end{equation}
Comparing the two Skein relations above (eq.~\eqref{eq:fiedler} and~\eqref{eq:oszvath}), we see that Fiedler's relation reduces to the Ozsvath et al. relation by setting $B=0$ and for $A=\sqrt{T}$ (which is the typical choice for $A$ to recover the Alexander polynomial from Fiedler's relation).

\begin{theorem}\label{thm:skein_gamma}
    the $\Gamma_\textup{s}$-calculus for a tangle $T$ satisfies the Skein relation:
    \begin{equation}\label{eq:skein_gamma}
    (t^{-1/2}-t^{1/2}) \Gamma_\textup{s}(T\singular) = (t^{-1/2} - s t^{1/2}) \Gamma_\textup{s}(T^+) - t^{1/2} (1 - s) \Gamma_\textup{s}(T^-) \, .
    \end{equation}
\end{theorem}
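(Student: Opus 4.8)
The plan is to reduce the statement to a single local identity at the modified crossing and then to show that this identity survives the two operations (disjoint union and merging) out of which every tangle is built. Throughout I abbreviate $\alpha = t^{-1/2}-t^{1/2}$, $\beta = t^{-1/2}-st^{1/2}$ and $\gamma = t^{1/2}(1-s)$, and I record the scalar identity $\alpha = \beta-\gamma$, which is used at every stage. The claimed equality lives in the $F$-vector space $P_{\mathcal L(T)}$ (pairs, with componentwise scalar action $c\cdot(\omega,A)=(c\omega,cA)$ and componentwise addition — not the operation $\sqcup$), so writing $\Gamma_\textup{s}(T\singular)=(\omega_0,A_0)$, $\Gamma_\textup{s}(T^+)=(\omega_+,A_+)$, $\Gamma_\textup{s}(T^-)=(\omega_-,A_-)$, the statement is equivalent to the pair of identities $\alpha\,\omega_0=\beta\,\omega_+-\gamma\,\omega_-$ on scalars and $\alpha A_0=\beta A_+-\gamma A_-$ on quadratic parts.

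First I fix a labelling making $T\singular,T^+,T^-$ a genuine Skein triple: three diagrams identical outside a disc and sharing the same four external legs and orientations at the crossing. Writing $a$ for the left-most incoming strand and $b$ for the right-most incoming strand, the three fillings are $X\singular_{ab}$, $X^+_{ab}$ and $X^-_{ba}$; note that the negative crossing has its over-strand entering from the right, so its over-label is $b$ and the correct filling is $X^-_{ba}$ rather than $X^-_{ab}$ (getting this dictionary wrong is the one easy way to produce a false identity). With it, I verify the local relation $\alpha\,\Gamma_\textup{s}(X\singular_{ab})=\beta\,\Gamma_\textup{s}(X^+_{ab})-\gamma\,\Gamma_\textup{s}(X^-_{ba})$ straight from eq.~\eqref{eq.regularX}--\eqref{eq.singularX}: the scalar parts collapse to $\alpha=\beta-\gamma$, and the four quadratic coefficients (of $r_ac_a,\ r_bc_b,\ r_ac_b,\ r_bc_a$) each match after a one-line simplification. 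Equivalently, the three local $2\times2$ blocks satisfy $\alpha A^0_{\mathrm{block}}=\beta A^+_{\mathrm{block}}-\gamma A^-_{\mathrm{block}}$.

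Next I propagate the relation along the assembly of $T$ from this crossing. Disjoint union with any fixed $(\omega_U,A_U)$ is immediate: the scalars are all multiplied by $\omega_U$ and the forms all gain the same summand $A_U$, and the identity $\beta-\gamma=\alpha$ makes the extra contributions cancel on both sides (so the relation is preserved). The merge $m^{ij}_k$, however, is genuinely nonlinear in $A$ through the factor $1-A_{ij}$ and the product $A_{i,\bullet}A_{\bullet,j}$ in eq.~\eqref{eq.mijk}, so a naive coefficient-by-coefficient induction does not close. Taming this nonlinearity is the main obstacle.

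I expect to resolve it through the Schur-complement structure of the merge, which is exactly what underlies the determinantal formula of Lemma~\ref{lemma:bulk_closure}. When the two strands of the distinguished crossing are eliminated by the merges that embed it into $T$, both $\omega_T$ and each coefficient of $A_T$ become polynomials of degree at most two in the four entries of the block $A_{\mathrm{block}}$, and the homogeneous degree-two part is governed by $\det(I_2-A_{\mathrm{block}})$. The decisive point is that all three local blocks are tangle-like (Lemma~\ref{lemma:idk}), so $I_2-A_{\mathrm{block}}$ is singular and $\det(I_2-A^0_{\mathrm{block}})=\det(I_2-A^+_{\mathrm{block}})=\det(I_2-A^-_{\mathrm{block}})=0$. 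Hence the degree-two contribution vanishes at each of the three crossing values, the dependence on the block is effectively affine there, and the affine part together with $\alpha=\beta-\gamma$ reproduces precisely the linear relation established locally. I would organize the endgame either as one application of Lemma~\ref{lemma:bulk_closure} after all merges are collected, or as an induction on the merges in which one tracks the affine-at-the-three-values dependence; in either route the hard part is to check that the degree-two part is proportional to the (vanishing) block determinant for the quadratic part $A_T$ and not merely for the scalar $\omega_T$, a verification well suited to the symbolic computation already used elsewhere in the paper.
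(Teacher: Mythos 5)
Your local identity at the crossing (including the crucial dictionary $X\singular_{ab}$, $X^{+}_{ab}$, $X^{-}_{ba}$) and the propagation through disjoint union are both correct. The gap is precisely at the step you defer to symbolic computation, and the check you defer would in fact \emph{fail}: the componentwise Skein relation you are trying to push through the merges is false for the quadratic part of tangles with more than one strand, so no induction maintaining $(\omega,A)$ coefficient-by-coefficient can close. Concretely, let the complement of the distinguished crossing be a single positive crossing and set
\begin{equation*}
T^{\epsilon}=m^{5,2}_{5}\bigl(m^{1,4}_{1}\bigl(X^{+}_{1,2}\,X^{\epsilon}\bigr)\bigr),\qquad X^{\epsilon}\in\{X\singular_{4,5},\,X^{+}_{4,5},\,X^{-}_{5,4}\},
\end{equation*}
a two-strand tangle. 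Writing $X_{44},X_{45},X_{54},X_{55}$ for the entries of the crossing block, two applications of eq.~\eqref{eq.mijk} give
\begin{equation*}
\omega(T^{\epsilon})=1-(1-t)X_{54},\qquad
\text{coefficient of }r_{5}c_{1}\text{ in }A(T^{\epsilon})=\frac{t\,X_{54}}{1-(1-t)X_{54}}\,.
\end{equation*}
The three scalars ($s+t-st$, $1$, $t+t^{-1}-1$) do satisfy your relation, but the three $r_{5}c_{1}$ coefficients are $\tfrac{t(1-s)}{s+t-st}$, $0$, $\tfrac{t-1}{t+t^{-1}-1}$, and at $t=4$, $s=2$ the relation would read $\alpha\cdot 2=\beta\cdot 0-\gamma\cdot\tfrac{12}{13}$, i.e.\ $-3=\tfrac{24}{13}$, which is false. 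The reason is structural: merging turns the entries of $A$ into ratios whose denominators (the pivots $1-A_{ij}$) are \emph{different} for the three diagrams, and a linear relation does not survive division by three different quantities.

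What does survive merging is the relation for $\omega$ and for the products $\omega\cdot A$: their entries remain polynomials of degree at most two in the block, with homogeneous quadratic part a multiple of $\det X$, and tangle-likeness makes $\det X^{\epsilon}=\operatorname{tr}X^{\epsilon}-1$ at the three values (this is your $\det(I_{2}-X)=0$ observation in its useful form, since it turns the quadratic invariant into a linear one there). So your induction invariant must be $(\omega,\omega A)$, not $(\omega,A)$. Correspondingly, the theorem itself has to be read the way the paper's proof reads it: despite the word ``tangle'' in the statement, the proof considers a $1$-tangle (long knot), where the quadratic part is identically $r_{1}c_{1}$ by the paper's lemma on long knots, so only the scalar carries content --- and for the scalar your ``hard part'' is not hard but also not needed in your generality. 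The paper sidesteps all of this with one generic computation: it writes the complement of the distinguished crossing as a $3$-strand tangle with a generic matrix, performs the four stitchings $m^{1,3}_1\circ m^{1,5}_1\circ m^{1,2}_1\circ m^{1,4}_1$ against each of the three crossing values, and solves for the Skein coefficients of the resulting scalars.
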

\begin{proof}
Consider a general 1-tangle $T$ with at least one singular crossing, and replace one singular crossing with a positive and with a negative crossing, see fig.~ \ref{fig:kauffman_K}. 
Before stitching the singular crossing to the remainder of the tangle consisting of $T_1$, $T_2$ and $T_3$, the $\Gamma_\textup{s}$ is given by:
\begin{align}
\Gamma_\textup{s}(T_1\sqcup T_2 \sqcup T_3\sqcup X\singular_{4,5}) &= 
    \left(\omega, \vec{r}A(T_1\sqcup T_2\sqcup T_3)\vec{c} + s \, r_4 c_4 + (1-s\,t) r_4 c_5 + (1-s)\, r_5 c_4 + s\, t \, r_5 c_5 \right)
    \label{eq:gamma_state_sum_sing}
    \\
    \Gamma_\textup{s}(T_1\sqcup T_2 \sqcup T_3\sqcup X_{4,5}^+) &= 
    \left(\omega, \vec{r}A(T_1\sqcup T_2\sqcup T_3)\vec{c} + r_4 c_4 + (1-t) r_4 c_5 + t \, r_5 c_5 \right)
    \label{eq:gamma_state_sum_pos}
    \\
    \Gamma_\textup{s}(T_1\sqcup T_2 \sqcup T_3\sqcup X_{5,4}^-) &= 
    \left(\omega, \vec{r}A(T_1\sqcup T_2\sqcup T_3)\vec{c} + r_5 c_5 + (1-t^{-1}) r_5 c_4 + t^{-1} \, r_4 c_4 \right)
    \label{eq:gamma_state_sum_neg}
\end{align}
with $\vec{r} = (r_1,r_2,r_3)$, $\vec{c} = (c_1,c_2,c_3)$. 
Performing the stitching 
$m^{1,3}_1 \circ m^{1,5}_1 \circ m^{1,2}_1 \circ m^{1,4}_1 $ 
to reconstruct the tangle gives the corresponding invariants of $T\singular$, $T^+$ and $T^-$.
Solving the Skein relation $\Gamma_\textup{s}(K\singular) = \alpha \Gamma_\textup{s}(K^+) + \beta \Gamma_\textup{s}(K^-)$ for the coefficients $\alpha$ and $\beta$ gives eq.~\eqref{eq:skein_gamma}
\end{proof}

\begin{figure}[tph!]
\captionsetup[subfigure]{justification=centering}
     \centering
     \hfill
     \begin{subfigure}[t]{0.25\textwidth}
         \centering
         \includegraphics[width=0.8\linewidth]{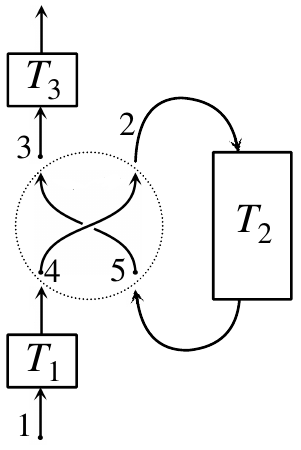}
         \caption{$T\singular$}
         \label{fig:singskein}
     \end{subfigure}
     \hfill
     \begin{subfigure}[t]{0.25\textwidth}
         \centering
         \includegraphics[width=0.8\linewidth]{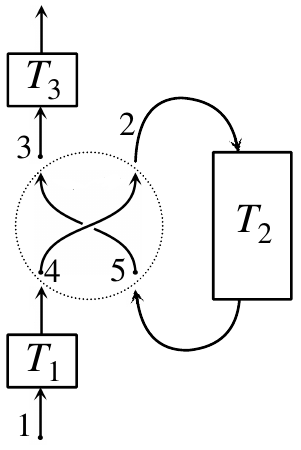}
         \caption{$T^+$}
         \label{fig:posskein}
     \end{subfigure}
     \hfill
     \begin{subfigure}[t]{0.25\textwidth}
         \centering
         \includegraphics[width=0.8\linewidth]{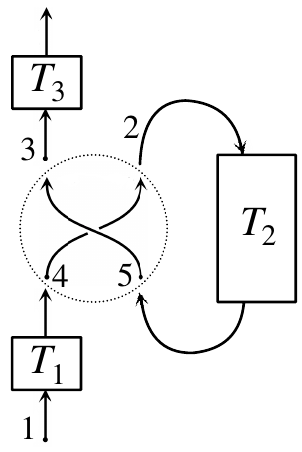}
         \caption{$T^-$}
         \label{fig:negskein}
     \hfill
     \end{subfigure}
        \caption{A singular tangle $T\singular$ (or knot $K\singular$) where one singular crossing is singled out and replaced by a positive crossing, giving $T^+$ (or $K^+$), or by a negative crossing, giving $T^-$ (or $K^-$). 
        The labelling is related to eqs.~\eqref{eq:gamma_state_sum_sing}-~\eqref{eq:gamma_state_sum_neg}.}
        \label{fig:kauffman_K}
\end{figure}


The scalar part of $\Gamma_\textup{s}(K)$ gives the invariant $\omega(K)$ corresponding to the long knot $K$, this invariant satisfies (by definition) the Skein relation of eq.~\eqref{eq:skein_gamma}.
We compare this Skein relation to Fiedler's relation in eq. \eqref{eq:fiedler} and find that for $A=\sqrt{t}$ and $B=s\sqrt{t}$, the invariants $\Delta^\textup{s}$ and $\omega$ satisfy the same relation. $A=\sqrt{t}$ is a standard choice here to recover the Alexander polynomial.

Similarly, we can see that $\Gamma_\textup{s}$'s Skein relation reduces to Ozsvath et al.~\cite{ozsvath2009floer} for $s=0$.
This is consistent with the relation between Fiedler $\Gamma_\textup{s}$: $B = s \sqrt{t}$).

\subsection{Relation to singular Kauffman state sum}\label{sec:kauffman}
Ozsvath et al. and Fiedler define their version of the singular Alexander polynomial based on a singular Kauffman state sum. Here we extend the Kauffman state sum to singular knots in the $\Gamma_\textup{s}$-calculus.

The Kauffman state sum provides a polynomial $\langle{K}\rangle$ defined as \cite{kauffman2006formal}:
\begin{equation}\label{eq:state_sum}
    \langle{K}\rangle= \sum_{S\in U} \langle K | S \rangle \, ,
\end{equation}
where $S$ is a state within the set of states $U$, and $\langle K | S \rangle$ is the inner product between the knot diagram $K$ and the state $S$, $\langle K | S \rangle = \sigma(S)w_1(S) \ldots w_n(S)$.
Here, $\sigma(S)=(-1)^B$ is the sign of the state $S$ determined by the number of black holes, $B$, where a black hole is defined to be the marker `beneath' a crossing; see fig.~ \ref{fig:kss_proof}.
Furthermore, $w_i(S)$ is the weight of the $i^\text{th}$ crossing in state $S$.
A state is a diagram of a long knot with one marker at every crossing, such that there is one marker in every inner region.
This is illustrated in fig.~ \ref{fig:state_sum_example} for the trefoil knot\index{knot!trefoil}. 
For an in-depth revision of the Kauffman state sum we refer to ref.~\cite{kauffman2006formal}.
The state sum polynomial is the determinant of the reduced Alexander matrix, $\langle K \rangle = \det(A)$~\cite{kauffman2006formal}.

\begin{figure}[tph!]
    \centering
    \includegraphics[width=0.75\textwidth]{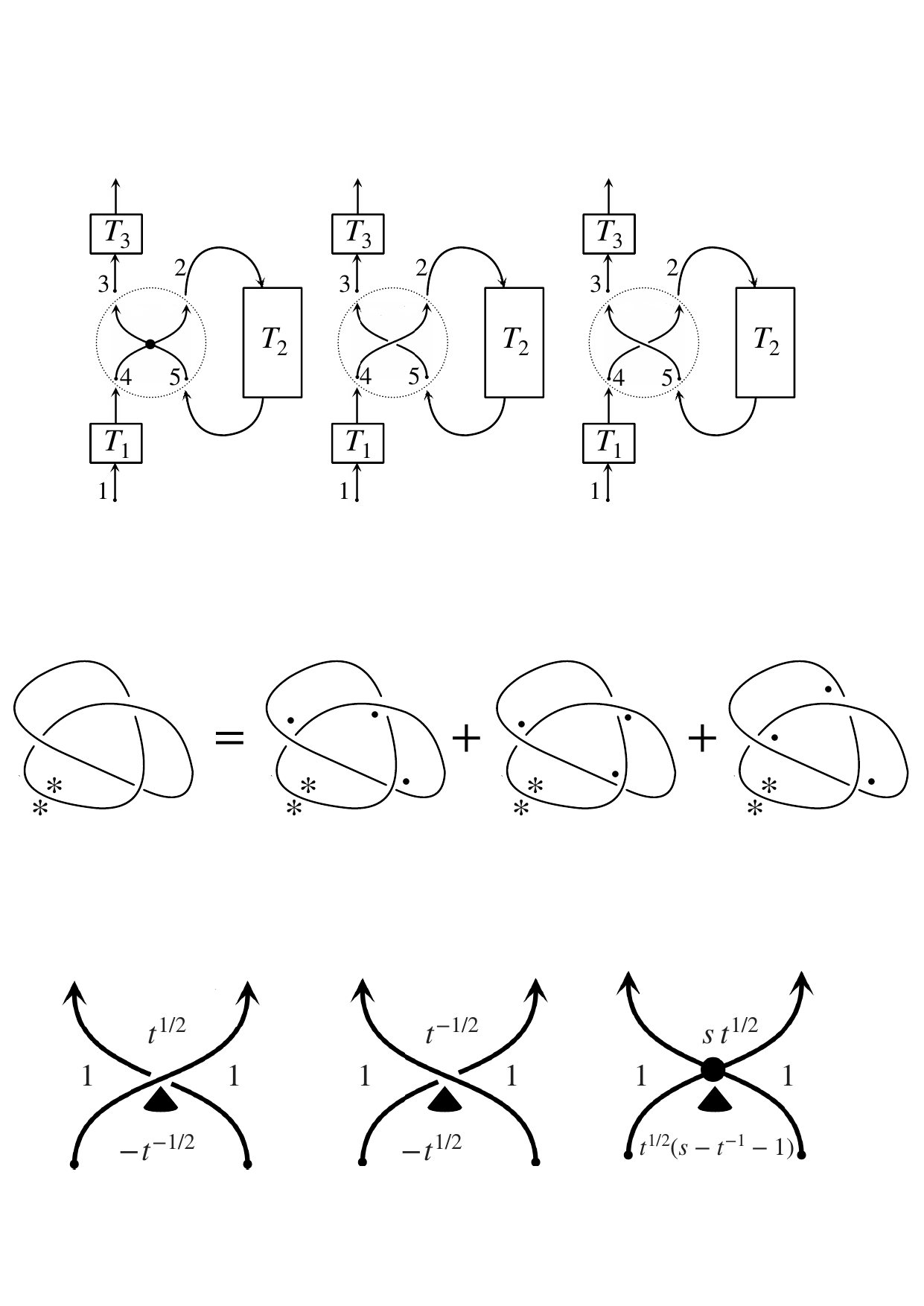}
    \caption{Example of the set of states of the trefoil knot. 
    To find the long knot, one region is `cut open', denoted by the star. 
    The three states are found by placing one weight at every crossing in every region; the weights are denoted by a dot to show the distinctness of the the three states.
    The value of the weights for the singular state sum are shown in fig.~\ref{fig:kss_proof}.}
    \label{fig:state_sum_example}
\end{figure}

\begin{proposition}\label{prop:sing_kauffman}
    Choosing the right assignments of regions and vertices for singular knots, the Kauffman state sum is the determinant of the generalised singular Alexander matrix with rows corresponding to vertices and columns corresponding to regions. The entries of the matrix are given by the weights $w$.
    \begin{equation}
    \Delta^\textup{s}(K) = \det(A) = \sum_{\sigma\in S_n} \text{sgn}(\sigma) a_{1\sigma(1)} \cdots a_{n\sigma(n)} = \langle K \rangle = \sum_{S\in U} \sigma(S) \langle K | S \rangle \, .
    \end{equation}
\end{proposition}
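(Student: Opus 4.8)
The plan is to read the displayed equation as three separate claims and dispatch each in turn. The middle equality $\det(A)=\sum_{\sigma\in S_n}\operatorname{sgn}(\sigma)\,a_{1\sigma(1)}\cdots a_{n\sigma(n)}$ is just the Leibniz formula, so nothing is needed there; the content is (i) $\det(A)=\langle K\rangle$, identifying the determinant with the state sum, and (ii) that this common value is $\Delta^\textup{s}(K)$. I would first fix the combinatorial setup: a connected $n$-crossing diagram has $n+2$ complementary regions, so after deleting the two regions needed to square the matrix (one being the starred region that opens the diagram into a long knot), both the rows (vertices/crossings) and the columns (regions) of $A$ are indexed by $\{1,\dots,n\}$. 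I would then define $a_{ij}$ to be the weight $w$ that crossing $i$ contributes when its marker sits in region $j$, with $a_{ij}=0$ whenever region $j$ is not one of the (at most four) regions locally incident to crossing $i$. For regular crossings I would use the standard Alexander weights; for a singular crossing I would attach the four local weights dictated by $\Gamma_\textup{s}(X\singular_{ij})$ in eq.~\eqref{eq.singularX}, namely the coefficients $s,\,1-st,\,st,\,1-s$ on the four adjacent regions, so that the ``right assignment'' is forced by compatibility with the calculus rather than chosen ad hoc.

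With this matrix in hand, the main structural step is a bijection. A nonzero term in the Leibniz expansion is a permutation $\sigma$ with $a_{i\sigma(i)}\neq 0$ for every $i$, i.e.\ a choice of one incident region per crossing; since $\sigma$ is a bijection, this places exactly one marker in each region, which is precisely the definition of a state $S\in U$, and conversely each state yields such a $\sigma$. Under this bijection the monomial $\prod_i a_{i\sigma(i)}$ equals $w_1(S)\cdots w_n(S)=\langle K\mid S\rangle$ by construction of the entries, so the identity $\det(A)=\langle K\rangle$ reduces to matching the signs: I must show $\operatorname{sgn}(\sigma)=\sigma(S)=(-1)^B$, where $B$ counts the black holes (markers placed beneath a crossing).

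I expect this sign identification to be the main obstacle, and the singular crossings are exactly where care is needed. The cleanest route is to absorb the local signs into the weights, so that $(-1)^B$ is already distributed across the factors $a_{i\sigma(i)}$; the residual discrepancy between the global $\operatorname{sgn}(\sigma)$ and the product of local signs is then a diagram-independent parity that one checks is constant over $U$ and normalises to $+1$. For the regular crossings this parity bookkeeping is Kauffman's and may be cited; the genuinely new work is to verify that the four signed weights assigned to $X\singular$ obey the same local rule, i.e.\ that replacing a regular crossing by a singular one in a single slot alters $a_{i\sigma(i)}$ and the black-hole count consistently. A convenient anchor for this check is the specialisation $s\to 1$, which collapses $X\singular$ to a positive crossing and should reduce the singular matrix and its determinant to the ordinary Alexander matrix and $\Delta$.

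Finally, that the common value equals $\Delta^\textup{s}(K)$ follows from the earlier subsections rather than from a fresh computation: the weights $s,\,1-st,\,st,\,1-s$ are the same data that produced the $\Gamma_\textup{s}$ Skein relation of Theorem~\ref{thm:skein_gamma}, which was matched to Fiedler's and Ozsvath et al.'s relations under $A=\sqrt t,\ B=s\sqrt t$. Since a local-weight state sum is pinned down by its behaviour under the Skein relation together with its value on the unknot, I would close the argument by verifying those two anchors and invoking the Skein matching to conclude $\det(A)=\Delta^\textup{s}(K)$.
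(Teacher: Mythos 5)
Your proposal addresses a statement for which the paper in fact offers \emph{no} proof: Proposition~\ref{prop:sing_kauffman} is asserted on the strength of Kauffman's classical theorem $\langle K\rangle = \det(A)$ for regular diagrams~\cite{kauffman2006formal}, combined with the construction of singular weights out of the $\Gamma_\textup{s}$-calculus (fig.~\ref{fig:kss_proof}); the Skein-relation matching that you invoke at the end appears in the paper only inside the proof of the \emph{next} proposition, which relates the state sum to $\omega(K)$. So your route is genuinely different in that it is self-contained where the paper defers to citation: you reconstruct Kauffman's argument explicitly --- the bijection between nonzero Leibniz terms and states is exactly right, since a permutation $\sigma$ with $a_{i\sigma(i)}\neq 0$ for all $i$ is precisely a placement of one marker per crossing with one marker per region --- and you correctly isolate the two places where the singular crossing genuinely matters, namely the sign bookkeeping and the identification of the common value with $\Delta^\textup{s}$. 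What the paper's terseness buys is brevity; what your version buys is an actual argument, plus a clear statement of what must be re-verified when a crossing is singular.

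Two soft spots would need attention in a full write-up. First, the claim that $\mathrm{sgn}(\sigma)\cdot(-1)^{B}$ is constant across all states is not a routine parity check: it is the content of Kauffman's clock theorem (any two states are connected by clock moves, each of which is simultaneously a transposition of $\sigma$ and a change of black-hole parity). Fortunately that argument depends only on the underlying four-valent projection and not on the crossing decorations, so it transfers verbatim to diagrams with singular crossings once ``black hole'' is defined at a singular crossing as in fig.~\ref{fig:kss_proof}; you should say this rather than leave it as a consistency check on weights. Second, your closing step --- ``a local-weight state sum is pinned down by its Skein behaviour and its value on the unknot'' --- must be run as an induction on the number of singular crossings: the relation of Theorem~\ref{thm:skein_gamma} only removes singular crossings, so you additionally need agreement on all non-singular knots, which is again Kauffman's classical theorem (or the Conway Skein relation); moreover this identification only determines the value up to the $\pm t^{k}$ normalisation ambiguity that the paper itself glosses over. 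With those two repairs your argument is complete, and it effectively supplies the proof the paper omits.
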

Here we have extended Kauffman's original Kaufmann state sum to include singular crossings by introducing weights for the singular crossing.
\begin{figure}[tph!]
    \centering
    \includegraphics[width=0.5\textwidth]{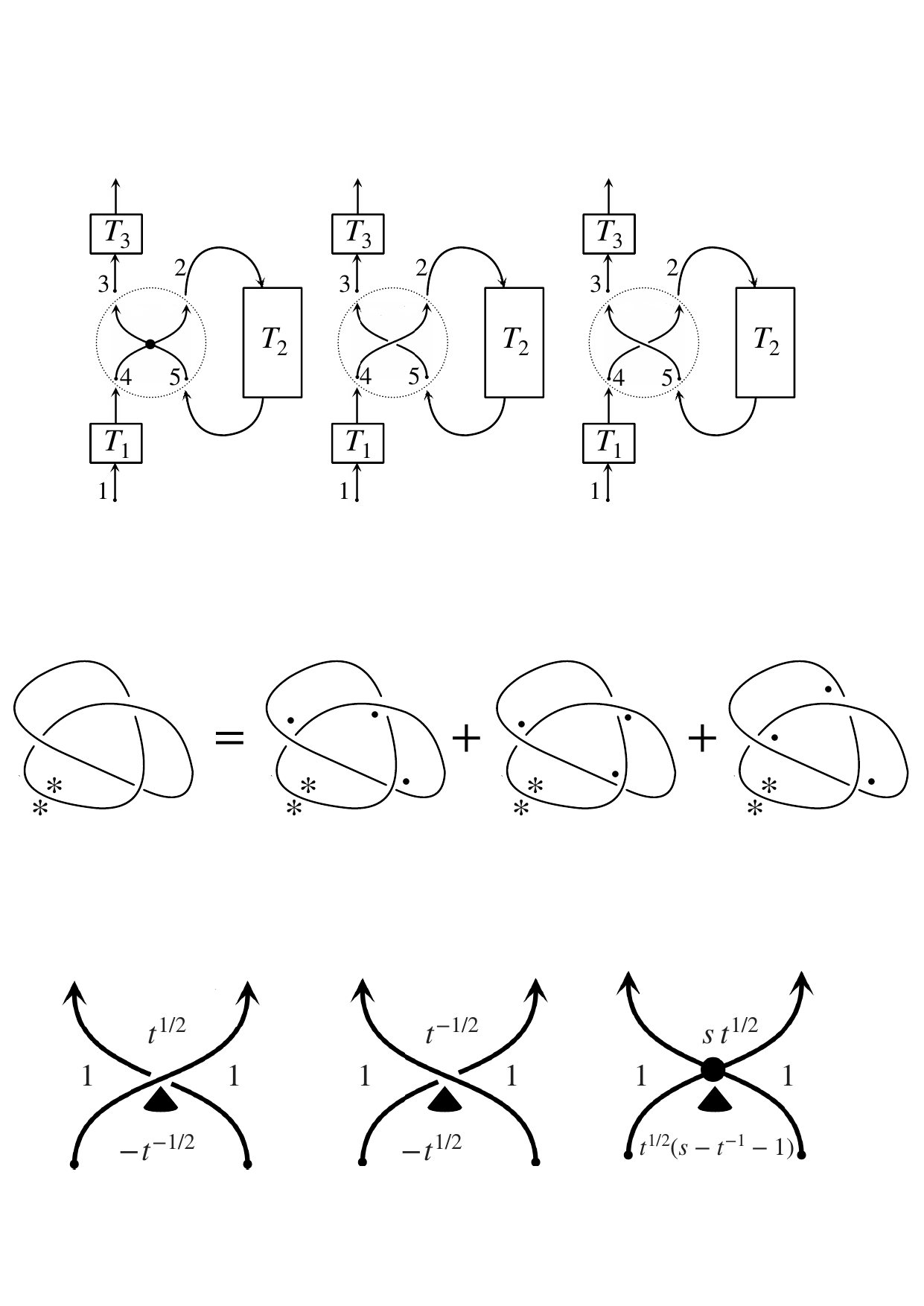}
    \caption{The weights are shown for each of the four regions that neighbour a crossing, for each of the three crossing types.
    The `black hole', which determines the sign of the state in eq.~\eqref{eq:state_sum} is indicated by the cone-like symbol.
    }
    \label{fig:kss_proof}
\end{figure}
%


%
The weights for the positive and negative crossing can be derived from the fundamental group via the Fox calculus on the Dehn presentation.
However, to our knowledge, this is not possible for the singular weights.
Therefore, we assign weights to the regions based on the $\Gamma_s$-calculus. 
The resulting weights are depicted in fig.~\ref{fig:kss_proof}.
%
%
By setting $s=1$, the singular crossing weights equal the positive crossing weights.
The weights in fig.~\ref{fig:kss_proof} allow for the construction of a singular variant of the Alexander matrix. 

\begin{proposition}
    The singular Alexander polynomial constructed from the singular Kauffman state sum using the weights in fig.~\ref{fig:kss_proof} for a singular knot $K$ is equal to $\omega(K)$, i.e. the invariant resulting from $\Gamma_\textup{s}(K)$.
\end{proposition}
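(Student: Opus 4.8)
The plan is to prove that the two invariants satisfy the \emph{same} singular Skein relation and agree on all non-singular knots, and then to conclude equality by induction on the number of singular crossings. Every application of the Skein relation eq.~\eqref{eq:skein_gamma} replaces one singular crossing by its positive and negative resolutions, so iterating it expresses both $\langle K\rangle$ and $\omega(K)$ as one and the same $\mathbb{Q}(s,t)$-linear combination of their values on diagrams with no singular crossings. Agreement on that base class then forces $\langle K\rangle = \omega(K)$ in general.

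For the base case, let $K$ be a non-singular knot. Its Alexander matrix uses only the positive and negative weights of fig.~\ref{fig:kss_proof}, which are the classical Kauffman weights obtained from Fox calculus; hence $\langle K\rangle = \det(A) = \Delta_K$ is the ordinary Alexander polynomial. On the other hand, writing $K = K_b$ as a braid closure, the theorem relating $\Gamma_\textup{s}$ to the Burau representation gives $\omega(K) = \det_1(1-\mathcal{B}(b)) = \Delta_K$. Thus $\langle K\rangle = \omega(K)$ up to the $\pm t^{k}$ normalization ambiguity already inherent in $\Gamma_\textup{s}$.

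The crux is to show that the state sum $\langle K\rangle$ obeys eq.~\eqref{eq:skein_gamma}, and for this I would use multilinearity of the determinant in its rows. In the matrix $A$ of Proposition~\ref{prop:sing_kauffman} the rows are indexed by crossings, the columns by regions, and the entries are the signed region weights (the black-hole signs $\sigma(S)$ being absorbed into the weights of fig.~\ref{fig:kss_proof}). Resolving one distinguished crossing as $K\singular$, $K^{+}$ or $K^{-}$ changes the diagram only locally, so the three matrices share all columns and all rows except the single row of that crossing. It therefore suffices to check the purely local identity
\begin{equation*}
(t^{-1/2}-t^{1/2})\,w\singular = (t^{-1/2}-s\,t^{1/2})\,w^{+} - t^{1/2}(1-s)\,w^{-}
\end{equation*}
between the four region-weight vectors $w\singular, w^{+}, w^{-}$ read off from fig.~\ref{fig:kss_proof}. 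Granting this, linearity of $\det$ in the distinguished row immediately propagates the identity to the determinants,
\begin{equation*}
(t^{-1/2}-t^{1/2})\,\langle K\singular\rangle = (t^{-1/2}-s\,t^{1/2})\,\langle K^{+}\rangle - t^{1/2}(1-s)\,\langle K^{-}\rangle ,
\end{equation*}
which is exactly eq.~\eqref{eq:skein_gamma} for the state sum. Since Theorem~\ref{thm:skein_gamma} gives the same relation for $\omega$, the two invariants satisfy identical recursions.

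The main obstacle is the local weight identity displayed above: it is precisely the compatibility that the singular weights of fig.~\ref{fig:kss_proof} were designed to meet, having been defined through the $\Gamma_\textup{s}$-calculus rather than from a (nonexistent) singular Fox-calculus derivation. Verifying it is a careful but routine bookkeeping of the four regions and their black-hole signs across the three crossing types, where one must also confirm that the region labelling is the standard Skein identification, so that genuinely only one row of $A$ changes and the reduced column set is common to $K\singular, K^{+}, K^{-}$. With this identity established, the induction on the number of singular crossings closes the argument and yields $\langle K\rangle = \omega(K)$ up to $\pm t^{k}$.
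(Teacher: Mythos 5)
Your proposal is correct and follows essentially the same route as the paper: both arguments hinge on showing that the singular Kauffman state sum obeys the same singular Skein relation, eq.~\eqref{eq:skein_gamma}, as $\omega$, and then concluding equality (up to $\pm t^{k}$) of the two invariants. You in fact make explicit two steps the paper's proof leaves implicit --- the induction on the number of singular crossings with a base case on non-singular knots (via Burau and the classical Kauffman state sum), and the row-multilinearity mechanism reducing the state-sum Skein relation to a local identity on the weight vectors of fig.~\ref{fig:kss_proof}.
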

\begin{proof}
    One can see that the singular variant of the Kauffman state sum satisfies the singular Skein relation for $\Gamma_\textup{s}$, i.e. eq.~\eqref{eq:skein_gamma}.
    Thus, the invariants are equal upto $\pm t^{k}$, $k\in\mathbb{Z}$.
    The Kauffman state sum is given as the determinant of the singular Alexander matrix, and has shown in sec.~\ref{subsec:gemein}, this is also how the invariant $\omega(K)$ is found, see eq.~\eqref{eq:wk}.
    Additionally, these weights match the weights in Feidler's singular state sum for $B=s\sqrt{t}$ and $A=\sqrt{t}$, the same equality matching found in the previous section.
    Furthermore, they match the weights of used by Oszvath at al. to find the singular Alexander polynomial for $s=0$.
\end{proof}

\section{Perturbed singular Alexander}\label{sec:perturbed}
We aim to extend the $\Gamma_\textup{s}$-calculus in order to construct a better invariant, in a similar way as was done in refs. \cite{bar2021perturbed,bar2022perturbed} for the $\Gamma$-calculus, where the invariant $(\Delta,\rho_1)(t)$ was introduced.
We introduce the singular version which we name $(\Delta^\textup{s},\rho^\textup{s}_1)(t,s)$ with $\Delta^\textup{s}$ being the two-variable singular Alexander polynomial and $\rho^\textup{s}_1$ the two-variable singular variant of $\rho_1$, which can be seen as a perturbation to the Alexander polynomial.
The invariant $\rho_1$ is stronger then $\Delta(t)$, while still easy to compute.
Similarly, the invariant $\rho^\textup{s}_1$ is still easy to compute and is stronger than the singular Alexander polynomial (introduced in the previous section as the scalar part of $\Gamma_\textup{s}$).
We will show this with an example in sec.~\ref{sec:rho1_example}.

\subsection{Rotational knot diagrams}\label{sec:rot_c}
\begin{wrapfigure}{r}{0.18\textwidth}
    \centering
    \includegraphics[width=0.18\textwidth]{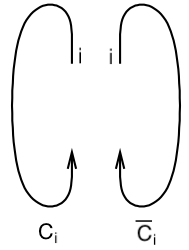}
    \caption{Curl diagrams.}
    \label{fig:curl}
\end{wrapfigure}
To improve the invariant with respect to the previous section, we introduce a way to keep track of the rotation number of the strands of the diagram. 
We define the rotational tangle diagrams following ref.~\cite[p.~31]{bar2021perturbed}, and introduce the crossingless strands $C_i$ and $\overline{C}_i$ that rotate counter-clockwise and clockwise, respectively, and we call them \textit{curls}; see fig.~\ref{fig:curl}.

Following again the definition in ref.~\cite[p.~32]{bar2021perturbed}, the disjoint union of two rotational tangle diagrams is the disjoint union as defined in def.~\ref{def:disjoint-union}.
Merging as presented in def.~\ref{def:merging} has the additional constraint that the arc that connects the two strands has rotation number $0$.

\subsection{Perturbed singular invariant}\label{sec:intro_perturbed}
We introduce recursive relations that indicate how to construct the matrix $G$.
\begin{lemma}[g-rules]\label{lem:grules}
For the positive and negative crossings (with overstrand $i$ and understrand $j$) the recursive relations that determine the $ij$-part of the matrix $G$ are:
\begin{align}
    g_{i,\beta}= g_{i^+,\beta }+\delta _{i,\beta } \, , \qq{}
    g_{j,\beta}= \left(1-t^{\pm}\right) g_{i^+,\beta }+t^{\pm} g_{j^+,\beta }+\delta _{j,\beta } \label{eq:grules}
\end{align}
\end{lemma}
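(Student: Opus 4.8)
The plan is to recognize the matrix $G$ as the resolvent (discrete Green's function) of the diagram's edge transition matrix, and to read off the two stated relations as a single row of the defining matrix identity at each crossing. Concretely, following ref.~\cite{bar2021perturbed}, index the edges (arcs) of the rotational diagram and let $A$ denote the transition matrix assembled from the crossings, so that $G$ is characterized by $(I-A)G=I$, equivalently $G = I + AG$. Taking the $(\alpha,\beta)$ entry of this identity gives the universal recursion
\[
g_{\alpha,\beta} = \delta_{\alpha,\beta} + \sum_{\gamma} A_{\alpha,\gamma}\, g_{\gamma,\beta},
\]
and the g-rules are exactly this identity specialized to the rows $\alpha=i$ and $\alpha=j$ indexed by the two incoming edges at a single crossing.

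First I would pin down the local contribution of a positive or negative crossing to $A$. Writing $i,j$ for the incoming over- and under-edges and $i^+,j^+$ for the corresponding outgoing edges, I claim the only nonzero entries of $A$ in rows $i$ and $j$ are $A_{i,i^+}=1$ for the over-strand and $A_{j,i^+}=1-t^{\pm}$, $A_{j,j^+}=t^{\pm}$ for the under-strand. These are obtained directly from the value $\Gamma_\textup{s}(X_{ij}^{\pm})=(1,\,r_ic_i+(1-t^{\pm1})r_ic_j+t^{\pm1}r_jc_j)$ in eq.~\eqref{eq.regularX}, after the same transpose-and-column-relabeling that relates the $\Gamma_\textup{s}$-matrix to the Burau matrix established above: the labels $i,j$ of the quadratic form are split into distinct incoming/outgoing edges, and the columns are renamed to the outgoing edges $i^+,j^+$. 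The over-strand then propagates trivially (coefficient $1$), while the under-strand carries the Burau weights $1-t^{\pm}$ and $t^{\pm}$.

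Substituting these entries into the universal recursion yields the two relations. For $\alpha=i$ only the term $\gamma=i^+$ survives, giving $g_{i,\beta}=\delta_{i,\beta}+g_{i^+,\beta}$; for $\alpha=j$ the terms $\gamma=i^+$ and $\gamma=j^+$ survive, giving $g_{j,\beta}=\delta_{j,\beta}+(1-t^{\pm})g_{i^+,\beta}+t^{\pm}g_{j^+,\beta}$. These are precisely eq.~\eqref{eq:grules}, the Kronecker deltas arising from the identity matrix on the right-hand side of $(I-A)G=I$. The sign convention $t^{\pm}=t^{\pm1}$ handles the positive and negative cases uniformly, so no separate computation is needed.

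The part requiring care — and the main obstacle — is fixing the bookkeeping conventions: verifying that, after splitting each strand into an incoming and an outgoing edge, the crossing contributes only to the two listed entries in rows $i$ and $j$ (locality of the crossing block), and that the transpose/column-relabeling is applied consistently so that the over-strand becomes pure propagation while the under-strand inherits the Burau coefficients with $i^+$ and $j^+$ in the correct positions. Once these identifications are settled, the lemma is an immediate row-wise reading of $G = I + AG$.
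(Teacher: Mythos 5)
Your proof is correct as linear algebra, and it is consistent with the paper's conventions: your transition matrix $A$ is exactly the negative of the blocks in Definition~\ref{def:notperturbation} (the $-\Gamma_\textup{s}^T$ blocks), so your identity $(I-A)G=I$ is the paper's statement that $G$ is the inverse of $1+A$, and reading off rows $i$ and $j$ does reproduce eq.~\eqref{eq:grules}. However, this is a genuinely different route from the paper's. The paper proves the lemma by the quantum-algebraic derivation sketched in sec.~\ref{app:Heisenberg}: the crossings are represented by Gaussian $R$-matrices $R_{ij}^{\pm}=e^{(T^{\mp}-1)(p_i-p_j)x_j}$ in the Heisenberg algebra $[p,x]=1$, and the g-rules record how a $p$ element is pushed through a crossing, $p_i R^{\pm}_{ij}=R^{\pm}_{ij}p_i$ and $p_jR^{\pm}_{ij}=R^{\pm}_{ij}\left(T^{\pm}p_j-(T^{\pm}-1)p_i\right)$, with the Kronecker delta $\delta_{\cdot,\beta}$ arising from the contraction of $p$ against an $x$ on edge $\beta$. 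The two approaches trade content for transparency: yours makes the recursion immediate and cleanly explains why $\det(1+A)$ computes the Alexander polynomial, but it takes $G:=(I-A)^{-1}$ as the \emph{definition}, so the lemma becomes essentially tautological; the paper's $G$ is instead defined a priori as the matrix of two-point functions of the perturbed Gaussian calculus, and the lemma's actual content is that those two-point functions satisfy this linear recursion. Under your reading, the burden you have silently discharged does not disappear — it moves to justifying that the resolvent entries are the correct quantities to insert into the quartic expressions $R_1[\pm,i,j]$ of Definition~\ref{def:gcros}, which is precisely what the Heisenberg-algebra argument of sec.~\ref{app:Heisenberg} supplies and what your proof leaves unaddressed.
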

\noindent Due to a difference in conventions these are not the same as in ref. \cite{bar2022perturbed}, where $\rho_1$ was introduced.

\begin{lemma}[Singular g-rules]\label{lem:sgrules}
For the singular crossing with the left strand labelled $i$ and the right strand labelled $j$ the recursive relation that determines the $ij$-part of the matrix $G$ is:
\begin{align}
    g_{i,\beta} =  s \, g_{i^+,\beta} + (1-s) \,  g_{j^+,\beta} + \delta_{i,\beta} \, , \qq{}
    g_{j,\beta} = (1- s t) \, g_{i^+,\beta} + s t \, g_{j^+,\beta }+\delta _{j,\beta } \label{eq:sgrules}
\end{align}
\end{lemma}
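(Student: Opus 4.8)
The plan is to obtain eq.~\eqref{eq:sgrules} exactly as the ordinary g-rules of Lemma~\ref{lem:grules} are obtained, namely as the local (row-by-row) form of the defining relation for $G$. Recall that the entries of $G$ satisfy a fixed-point relation of the shape $g_{\alpha,\beta} = \delta_{\alpha,\beta} + \sum_\gamma T_{\alpha,\gamma}\, g_{\gamma,\beta}$, where $T$ is the transmission matrix assembled crossing by crossing; at a single crossing the only nonzero $T_{\alpha,\gamma}$ couple an incoming edge $\alpha$ to an outgoing edge $\gamma$ of that same crossing. Taking $\alpha = i$ and $\alpha = j$ in this relation therefore isolates precisely the $ij$-part claimed in the lemma, with $\gamma$ ranging over the two outgoing edges $i^+, j^+$.

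First I would read off the local transmission coefficients of the singular crossing directly from $\Gamma_\textup{s}(X\singular_{ij})$ in eq.~\eqref{eq.singularX}, using the same correspondence that turns $\Gamma_\textup{s}(X^\pm_{ij})$ into Lemma~\ref{lem:grules}: the coefficient of $r_a c_b$ in the quadratic form becomes the transmission from the incoming edge $b$ into the outgoing edge $a^+$. The four coefficients $s,\ 1-st,\ 1-s,\ st$ of $r_i c_i,\ r_i c_j,\ r_j c_i,\ r_j c_j$ then give $T_{i,i^+} = s$, $T_{i,j^+} = 1-s$, $T_{j,i^+} = 1-st$, and $T_{j,j^+} = st$. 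Substituting these into the relation above for $\alpha = i$ and $\alpha = j$ yields the two identities of eq.~\eqref{eq:sgrules} verbatim. To keep this self-contained I would justify the reading-off step rather than assert it, observing that the singular crossing enters the global $\Gamma_\textup{s}$-matrix through the merge rule eq.~\eqref{eq.mijk} in the same slots as a regular crossing, so the identical transpose-and-relabel bookkeeping ($T_{b,a^+} = A_{ab}$) applies.

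As a sanity check I would set $s = 1$: the first relation collapses to $g_{i,\beta} = g_{i^+,\beta} + \delta_{i,\beta}$ and the second to $g_{j,\beta} = (1-t) g_{i^+,\beta} + t\, g_{j^+,\beta} + \delta_{j,\beta}$, recovering the positive-crossing case of eq.~\eqref{eq:grules} and matching the standing observation that $s=1$ replaces a singular crossing by a positive one. The main obstacle is not computational but bookkeeping: one must pin down the incoming-versus-outgoing edge-labelling ($i$ versus $i^+$) and the transpose consistently, since this is exactly where the convention difference with ref.~\cite{bar2022perturbed} flagged after Lemma~\ref{lem:grules} resides, and one must confirm that the rules are mutually compatible across crossings so that the global $G$, and hence $\rho_1^\textup{s}$, is well defined.
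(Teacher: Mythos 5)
Your mechanics are right: the coefficients of $r_ac_b$ in eq.~\eqref{eq.singularX}, transposed into transmission coefficients $T_{b,a^+}$, do reproduce eq.~\eqref{eq:sgrules}, and the $s=1$ degeneration check is consistent. The problem is that the argument is circular at its key step. You treat the fixed-point relation $g_{\alpha,\beta}=\delta_{\alpha,\beta}+\sum_\gamma T_{\alpha,\gamma}\,g_{\gamma,\beta}$, with $T$ read off from $\Gamma_\textup{s}$, as a pre-existing defining relation for $G$; but no such relation is available before the lemma. In this paper $G$ is \emph{constructed} from the g-rules, and the identification of the crossing blocks with the matrix part of $-\Gamma_\textup{s}^T$ (stated after Definition~\ref{def:notperturbation}) is an observation made once the rules are already in hand, not an input. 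So deriving the singular g-rules from the $\Gamma_\textup{s}$ coefficients assumes exactly what the lemma asserts: that the g-propagation coefficients of the singular crossing coincide with its $\Gamma_\textup{s}$ entries. Your attempted justification --- that the singular crossing enters the global matrix through the merge rule eq.~\eqref{eq.mijk} in the same slots as a regular crossing --- only concerns how $\Gamma_\textup{s}$ blocks assemble, a statement internal to the $\Gamma_\textup{s}$ calculus; it says nothing about why the two-point functions $g_{k,\beta}$ propagate with those particular coefficients. Even for the ordinary crossings this coincidence is a theorem of ref.~\cite{bar2022perturbed}, proved in the quantum-algebra framework, not a formal consequence of the merge rule.

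The paper's proof supplies precisely the independent input you are missing (sec.~\ref{app:Heisenberg}): there $g_{k,\beta}$ has intrinsic meaning as the coefficient describing how a $p$-element on strand $k$ is pushed through a crossing in the Heisenberg algebra; the singular crossing carries the exponential $R$-matrix $R_{ij}\singular=e^{(s-1+sT)^{-1}(p_i-p_j)[(s-1)x_i+(1-sT)x_j]}$, itself a solution of the singular Reidemeister moves, and the commutation relations $p_i R_{ij}\singular = R_{ij}\singular\,(s\,p_i-(s-1)p_j)$ and $p_j R_{ij}\singular = R_{ij}\singular\,(sT\,p_j+(1-sT)p_i)$ are computed; their coefficients are exactly eq.~\eqref{eq:sgrules}. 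To repair your proof you should either carry out this push-through computation, or derive the four coefficients self-containedly by imposing invariance of the weight propagation under the singular Reidemeister moves \eqref{eq:sing_RM_rel1}--\eqref{eq:sing_RM_rel2} (the method illustrated in fig.~\ref{fig:example-sr3}), rather than importing them from $\Gamma_\textup{s}$ by analogy.
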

\noindent In the case $s=1$, the singular crossing relation reduces exactly to the positive crossing relation.

\begin{proof}
    A sketch of the derivation of the (singular) g-rules can be found in sec.~\ref{app:Heisenberg}.
\end{proof}

The matrix $G$ that is constructed form the g-rules is the inverse of the the matrix $A$, which has the property that $\det(1+A^T) = \det(1+A)=\Delta_K^s$.
Thus, the g-rules can be used to find the Alexander polynomial.

\begin{definition}\label{def:notperturbation}
We define the total matrix $A$ of a knot the sum over the crossings, $A=\sum_c A_c$, with $A_c$ a $(2n+1)\cross(2n+1)$ size matrix with zeros except for the block corresponding to the crossing $c$, where it is given as:
\begin{align}
    A^\pm = 
    \left[\begin{array}{c|c c} 
	1 & i^+ & j^+ \\ 
    \hline
    i & -1 & 0 \\ j & -1+t^\pm  & -t^\pm
    \end{array}\right]
    \, , \qq{}
    A\singular = 
    \left[\begin{array}{c|c c} 
    1 & i^+ & j^+ \\
    \hline
    i & - s & -1+s \\ j & -1+st & -s t
    \end{array}\right]
    \, , \label{eq:A_matrix}
\end{align}
with the superscript indicating the type of crossing. 
\end{definition}

The blocks in the matrix $A_c$ given in eq.~\eqref{eq:A_matrix} are equal to the matrix part of $- \Gamma_\textup{s}^T$ in eqs. \eqref{eq.regularX},~\eqref{eq.singularX}.
Thus, the matrix part of $\Gamma_\textup{s}^T$ is the minus of the transpose of the Alexander matrix.
The the Alexander polynomial is given by $\det(1+A) = \det(1-\Gamma_\textup{s}^T) = \det(1-\Gamma_\textup{s})$ (since the determinant is invariant under the transpose). 
We can further normalise to determine:
\begin{equation}
    \Delta^\textup{s}_D = t^{(\phi(D)-w(D)-x(D))/2}\det(1+A)\, ,
\end{equation}
with $A$ build as described above, $\phi$ the total rotation number of the diagram $D$ (the sum of the signs of the curls), $w$ the wright (the total sum of the signs of the crossings, a singular crossing having sign zero), and $x$ the number of singular crossings of the diagram $D$.

Additionally we now define $\rho_1^\textup{s}$ , the second part of the invariant pair $(\Delta^\textup{s}, \rho_1^\textup{s})$.
By defining the crossings at quadratic order in terms of $g_{i,j}$, we can find a stronger invariant from the g-rules.
Since this is like a second-order perturbation effect (in $g$), we refer to it as the \textit{perturbed} Alexander polynomial.
This invariant has it's background in quantum algebra, as explained in ref.~\cite{bar2021perturbed}, which we touch upon in sec.~\ref{app:Heisenberg}.
We first define for each type of crossings an $R$-matrix given in terms of the two-point g-functions at quadratic order, and the curls $C$ we express in terms of two-point g-functions at linear order.
\begin{definition}[Crossing $R$-matrices]\label{def:gcros}
We denote the positive and negative crossings with overstrand $i$ and understand $j$ as $R_1[+,i,j]$, $R_1[-,,i,j]$, the singular crossing with the left-most beginpoint labelled $i$ and the right-most $j$ as $sR[i,j]$, and the counter-clockwise/clockwise curl of strand $i$ as $C[\pm,i]$.
    \begin{align}
    R_1[\pm,i,j] &= \mp \left(t^{\pm 2}+t^{\pm 1}-2\right) g_{i,j}^2 \mp g_{i,j} \left(g_{j,i}+2 g_{j,j}+t^{\pm 1}\right) \pm g_{i,i} \left([t^{\pm 1}+3] g_{i,j}-g_{j,j}+1\right) \mp \frac{1}{2}
    \\ \label{eq:R1}
    C[\pm,i] &= \pm g_{i,i} \mp \frac{1}{2} \\
    sR_1[i,j] &= 
    c_1 g_{i,i} g_{i,j} 
    - c_2 g_{i,i} g_{j,i} 
    + c_3 (g_{i,i} g_{j,j} + g_{i,j} g_{j,i})
    + c_4 g_{i,j}^2
    + c_5 g_{j,j}^2
    - c_6 g_{i,j} g_{j,j} \nonumber \\
    &\qq{} + c_7 g_{j,i}^2 - c_8 g_{j,i} g_{j,j} + s (g_{i,i} - t \, g_{i,j}) - \frac{1}{2}
    \label{eq:R3}
    \end{align}
\end{definition}
For the curl $C$, the plus sign corresponding to a counter-clockwise rotation and the minus sign corresponding to a clockwise rotation (denoted $C_i$ and $\overline{C}_i$ in fig.~\ref{fig:curl}, respectively).
The coefficients $c_i$ are:
\begin{align}
    c_1 &= \frac{s\,t}{(t - 1) (s - 1 + s t)^2}(-1 - 3 t + s^2 t (-1 + t^2) + s (1 + t + 2 t^2))\, , \nonumber \\
    c_2 &= \frac{2}{(t - 1) (s - 1 + s t)^2} (-1 + s) s (1 - s (1 + t)^2 + s^2 t (1 + t)^2)\, , \nonumber \\
    c_3 &= \frac{s\,t}{(t - 1) (s - 1 + s t)^2} (2 t + 3 s (1 + t) + 2 s^3 t (1 + t)^2 - s^2 (3 + 6 t + 5 t^2 + 2 t^3)) \, , \nonumber \\ 
    c_4 &= \frac{s\,t^2}{(t - 1) (s - 1 + s t)^2} (-1 + s (-1 + t) + t + s^2 (t - t^3))\, , \nonumber \\
   c_5 &= \frac{s\,t}{(t - 1) (s - 1 + s t)^2}(s-1) (-1 + t^2) (-1 - s + s^2 t (1 + t)) \, , \nonumber \\
   c_6 &= \frac{2 s\,t}{(t - 1) (s - 1 + s t)^2} (-1 - t + t^2 + s^3 t^2 (1 + t)^2 + s (1 + 2 t + 2 t^2) - 
   s^2 t (2 + 3 t + 2 t^2 + t^3))\, , \nonumber \\
   c_7 &= \frac{s}{(t - 1) (s - 1 + s t)^2}(s-1) (-1 + t) (-1 + s^2 (1 + t)^2)\, , \nonumber \\
   c_8 &= \frac{1}{(t - 1) (s - 1 + s t)^2}((s-1) s (-1 + s - t - 2 t^2 - s t^2 + 
     2 s^2 (-1 + t) t (1 + t)^2)) \, . \nonumber
\end{align}

There is some freedom in defining the expressions in Definition \ref{def:gcros}. 
We chose the definition for the positive/negative crossing and curl such that it matches the invariant $\rho_1$ discussed in ref.~\cite{bar2022perturbed}.
The singular $R$-matrix was chosen such that for $s=1$, it reduces to the positive crossing $R$-matrix.
As a result, if $s=1$, the singular invariant $\rho_1^\textup{s}$ matches the non-singular invariant $\rho_1$ in ref.~\cite{bar2022perturbed} for the knot where the singular crossings are replaced by positive crossings.
Other choice may result in less complex expressions for $sR_1$.
We now define the perturbed invariant as the sum over all crossings and curls in the diagram.

\begin{definition}[$\rho_1^\textup{s}$]\label{def:rho1}
For a knot $K$ represented by the diagram $D$, the invariant contribution from the perturbation is:
\begin{equation}\label{eq:rho1s}
    \rho_1^\textup{s}(K) = -\left(\Delta^\textup{s}_K\right)^2 \left( \sum_{c\in D} R_1(c) + \sum_k \phi_k \left[ g_{kk} - \frac{1}{2}\right] \right) \, ,
\end{equation}
with the sum over the crossings $c$ over all the $R$-matrices of the crossings, and the sum over the edges $k$ over the rotation number $\phi$, and with the crossings as in Definition \ref{def:gcros}.
\end{definition}

The factor minus $\Delta^\textup{s}_K$ squared is used as a normalisation to match with previous results; the non-singular knots in the table in Appendix~\ref{app:knot_table} match the $\rho_1$ in ref.~\cite{bar2022perturbed}.

\begin{proof}[Proof: Invariance of $\rho_1^\textup{s}$]
    The g-rules in Lemmas \ref{lem:grules} and \ref{lem:sgrules} can be interpreted as assigning a weight to each strand that goes out of a crossing.
    For example, the ingoing strand $j$ assigns a weight $(1-t)$ to the outgoing stand $i^+$ and a weight $t$ to the outgoing stand $j^+$ according to the g-rule for $g_{i,\beta}$.
    Interpreting the g-rules this way, and multiplying the weights gained through each crossing, the weights at the end of each strand are invariant under the Reidemeister moves, see for example fig.~ \ref{fig:example-sr3}.

    The expressions in def.~\ref{def:gcros} were found by first expressing the eqs.\eqref{eq:R1}-\eqref{eq:R3} (the $R$-matrices and curl definitions) in terms of arbitrary coefficients, and then solving the conditions from the Reidemeister moves depicted in fig.~\ref{fig-knots:RMmoves} and additionally fig.~\ref{fig-knots:sRM}, for the coefficients, using the g-rules.
    This derivation can be found on Github, see the link in footnote~\ref{footnote:github-knots}.
    We have solved the expressions for the crossings and curl explicitly such that the weight at the end (i.e. $\rho_1^\textup{s}$) is invariant under the Reidemeister moves. Therefore, Defenition~\ref{def:rho1} provides an invariant.
    
\end{proof}


\begin{figure}[thb]
     \centering
     \begin{subfigure}[t]{0.49\textwidth}
         \centering
         \includegraphics[width=0.7\linewidth]{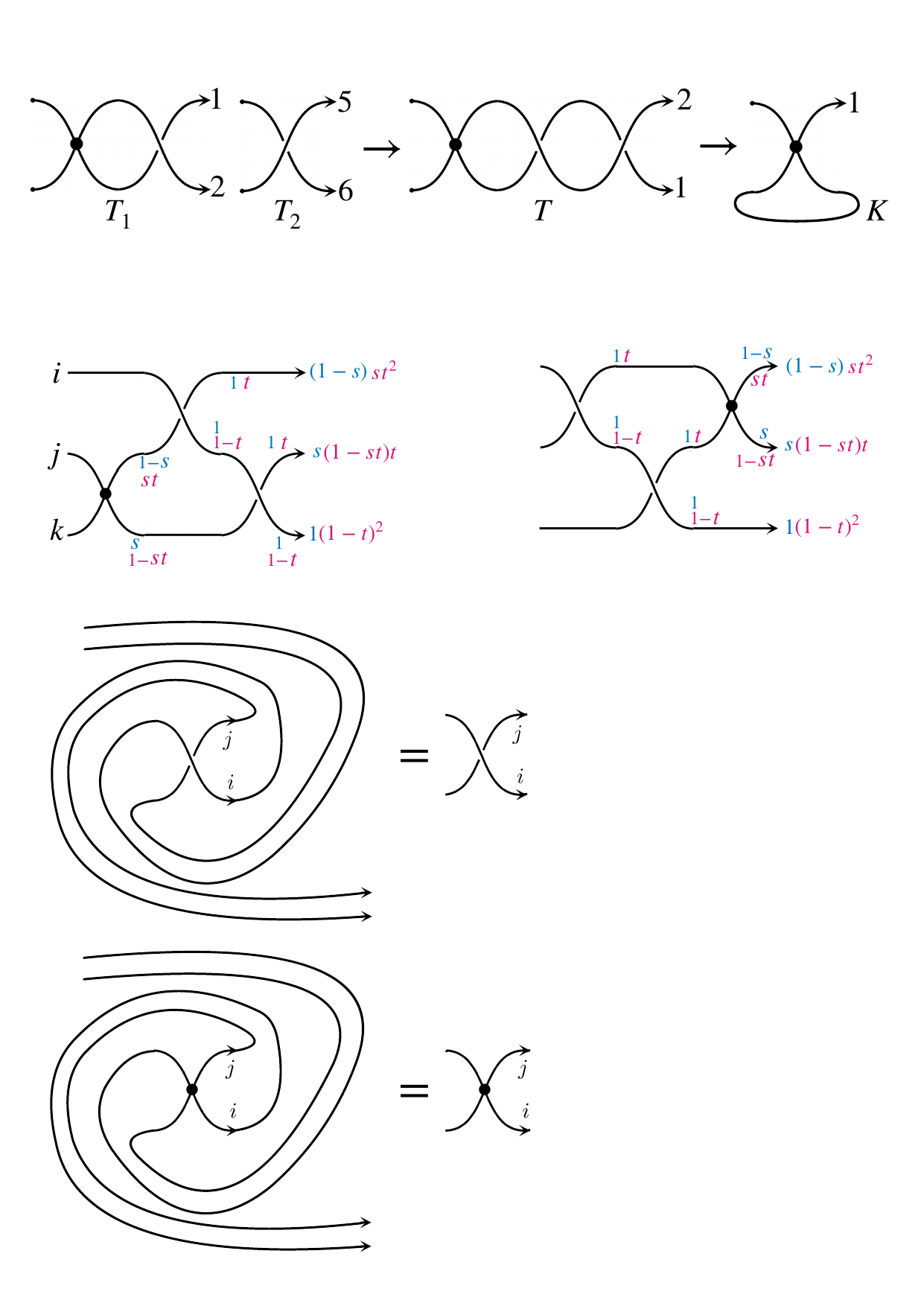}
     \end{subfigure}
     \hfill
     \begin{subfigure}[t]{0.49\textwidth}
         \centering
         \includegraphics[width=0.7\linewidth]{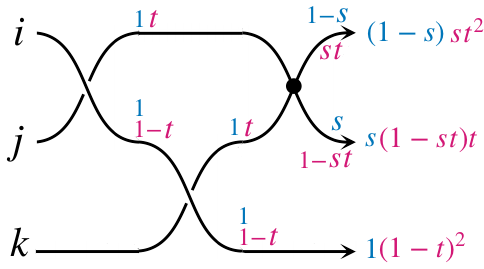}
     \end{subfigure}
        \caption{Example of the interpretation of the (singular) g-rules in eqs. \eqref{eq:grules}-\eqref{eq:sgrules} as the distribution of weights on the strands, invariant under the third singular Reidemeister move.}
        \label{fig:example-sr3}
\end{figure}


\begin{figure}[tbh]
     \centering
     \begin{subfigure}[b]{0.49\textwidth}
         \centering
         \includegraphics[width=0.85\linewidth]{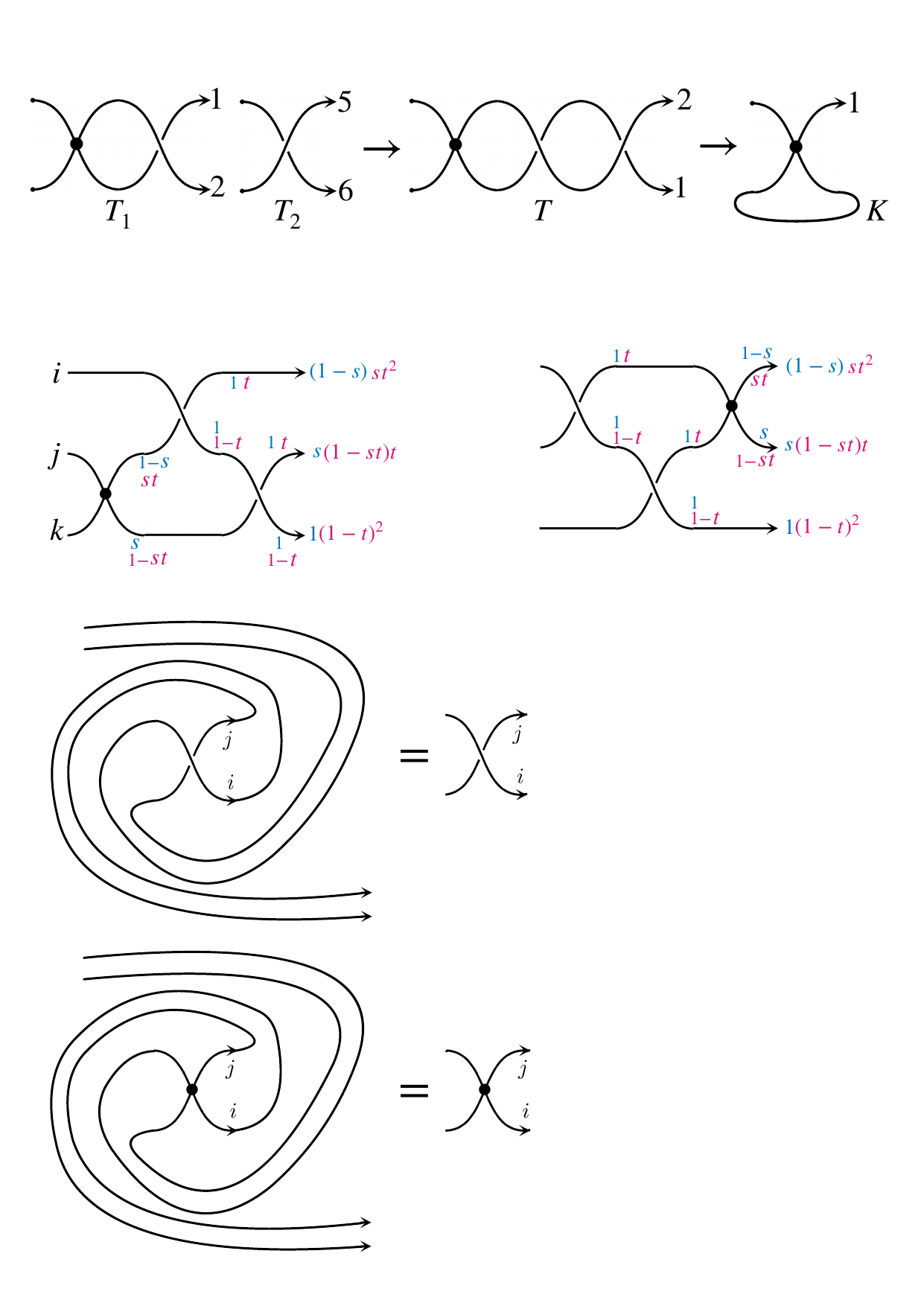}
     \end{subfigure}
     \hfill
     \begin{subfigure}[b]{0.49\textwidth}
         \centering
         \includegraphics[width=0.85\linewidth]{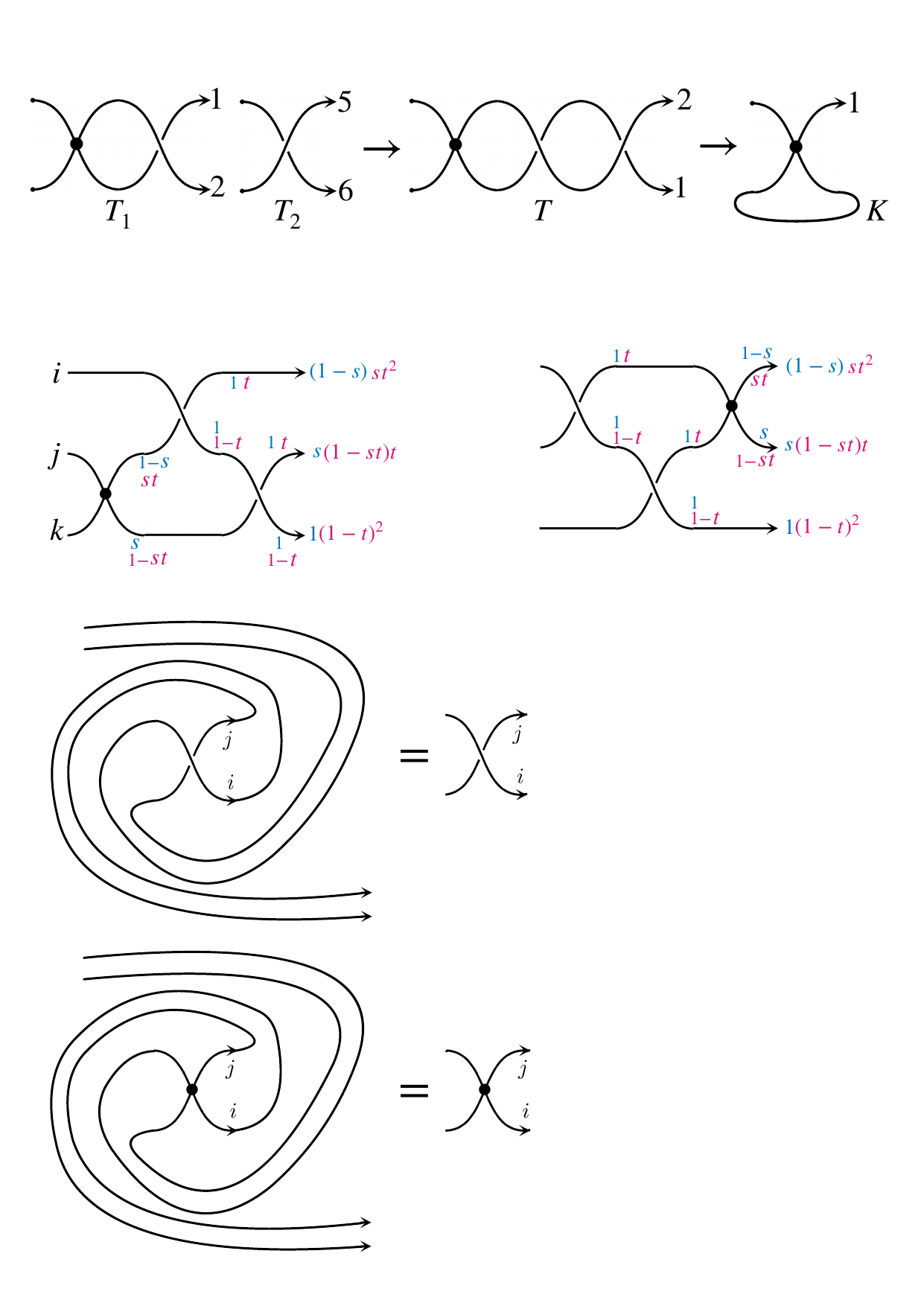}
     \end{subfigure}
    \caption{Additional singular Reidemeister moves used to determine the coefficients in Definition~\ref{def:rho1}.}
    \label{fig-knots:sRM}
\end{figure}

\subsection{Examples of improvement}\label{sec:rho1_example}
As shown in the knot table in appendix \ref{app:knot_table}, the singular Alexander polynomial found from $\Gamma_\textup{s}$ already distinguishes more knot classes then the non-singular Alexander polynomial.
For example, the extra information embedded in the singular crossing helps distinguish the right-handed singular trefoil and left-handed singular trefoil\index{knot!trefoil} (singular trefoil meaning the trefoil knot where at least one crossing is singular).
The knot table also shows that the singular Alexander polynomial from $\Gamma_\textup{s}$ and from the g-rules match, as was shown in sec.~\ref{sec:intro_perturbed}.
For the knots included in the knot table, which is knots with up to five crossings, the presence of a singular crossing already distinguishes the left-and right-handed variants that the non-singular $\Gamma$ does not distinguish. 
Reference~\cite{bar2022perturbed} showed that the perturbed Alexander invariant \textit{does} distinguish these left-and right-handed variants, which was our motivation for introducing a singular perturbed Alexander invariant.
In example~\ref{ex:improvement} we show how $\Delta_K^s$ does not distinguish two knot classes, but $\rho_1^\textup{s}$ does.

In general, the Alexander polynomials of two knots that differ by a double-delta move (depicted in the circle in fig.~\ref{fig:double_delta}) are indistinguishable.
Since adding a singular crossing is the same on the right-and left-hand side of the double delta move, the two sides of the double delta move with a singular crossing attached at the same place are also indistinguishable when the tangle is closed to form a long knot. 
We would expect the Alexander polynomial to not distinguish such types of singular knots, while the perturbed Alexander polynomial does, i.e. it distinguishes $S$-equivalent knots~\cite{naik2003move}. 


\begin{example}\label{ex:improvement}
    We consider two singular knots related by a double-delta move and show that although the singular Alexander polynomial from $\Gamma_\textup{s}$ is the same for both tangles, the perturbed Alexander polynomial $\rho_1^\textup{s}$ is able to distinguish them and therefore is an improved invariant.
    We consider the simplest closures of the double delta moves shown in fig.~\ref{fig:double_delta}.
    From $\Gamma_\textup{s}$ we find the scalar part to be:
    \begin{align}
        \Gamma_\textup{s}(K_l) &= -\frac{2 s^2}{t}+4 s^2+\frac{s}{t}-3 s+\frac{1}{t}\, ,\\
        \Gamma_\textup{s}(K_r) &= -\frac{2 s^2}{t}+4 s^2+\frac{s}{t}-3 s+\frac{1}{t} \, .
    \end{align}
    From the perturbed invariant we find:
    \begin{align}
        (\Delta^\textup{s},\rho_1^\textup{s})(K_l) &= \Bigg(\frac{4 s^2 t-2 s^2-3 s t+s+1}{t},-\frac{(s-1)}{(t-1) t^2 (s t+s-1)^2} \left[5 s^5 t^7-13 s^5 t^6-12 s^5 t^5 \right.  \nonumber \\ 
        &\qq{} + 26 s^5 t^4+10 s^5 t^3 
        -10 s^5 t^2+s^5 t+s^5+2 s^4 t^6-5 s^4 t^5+59 s^4 t^4-81 s^4 t^3 \nonumber \\ 
        &\qq{}+5 s^4 t^2+6 s^4 t-2 s^4-4 s^3 t^5-17 s^3 t^4 -5 s^3 t^3+53 s^3 t^2-10 s^3 t+s^3 -2 s^2 t^4 \nonumber \\ 
        &\qq{} \left.+38 s^2 t^3-44 s^2 t^2-s^2 t-s^2-s t^3-4 s t^2+3 s t+2 s+t-1\right]\Bigg)
    \end{align}
    and
    \begin{align}
        (\Delta^\textup{s},\rho_1^\textup{s})(K_r) &= \Bigg( \frac{4 s^2 t-2 s^2-3 s t+s+1}{t},\frac{1}{(t-1) t^2 (s t+s-1)^2}\left[ 3 s^6 t^7+5 s^6 t^6-2 s^6 t^5 \right. \nonumber \\ 
        &\qq{}-10 s^6 t^4 -6 s^6 t^3+2 s^6 t^2 +s^6 t-s^6-13 s^5 t^7-12 s^5 t^6+56 s^5 t^5-75 s^5 t^4 \nonumber \\ 
        &\qq{} +55 s^5 t^3 +24 s^5 t^2-14 s^5 t+3 s^5+9 s^4 t^7+35 s^4 t^6 -86 s^4 t^5+70 s^4 t^4 \nonumber \\ 
        &\qq{}+25 s^4 t^3-111 s^4 t^2 +27 s^4 t-3 s^4-24 s^3 t^6+5 s^3 t^5+66 s^3 t^4-136 s^3 t^3 \nonumber \\ 
        &\qq{}+124 s^3 t^2 -9 s^3 t+2 s^3 +22 s^2 t^5-41 s^2 t^4+45 s^2 t^3-23 s^2 t^2-10 s^2 t-3 s^2 \nonumber \\ 
        &\qq{}\left.-8 s t^4 +16 s t^3-14 s t^2+3 s t +3 s+t^3-2 t^2+2 t-1 \right] \Bigg) \, .
    \end{align}
    Although the singular Alexander polynomials are the same, the perturbed singular Alexander, $\rho_1^\textup{s}$ distinguishes the two knots.

\begin{figure}[thb]
    \captionsetup[subfigure]{justification=centering}
     \centering
     \begin{subfigure}[t]{0.46\textwidth}
         \centering
         \includegraphics[width=\linewidth]{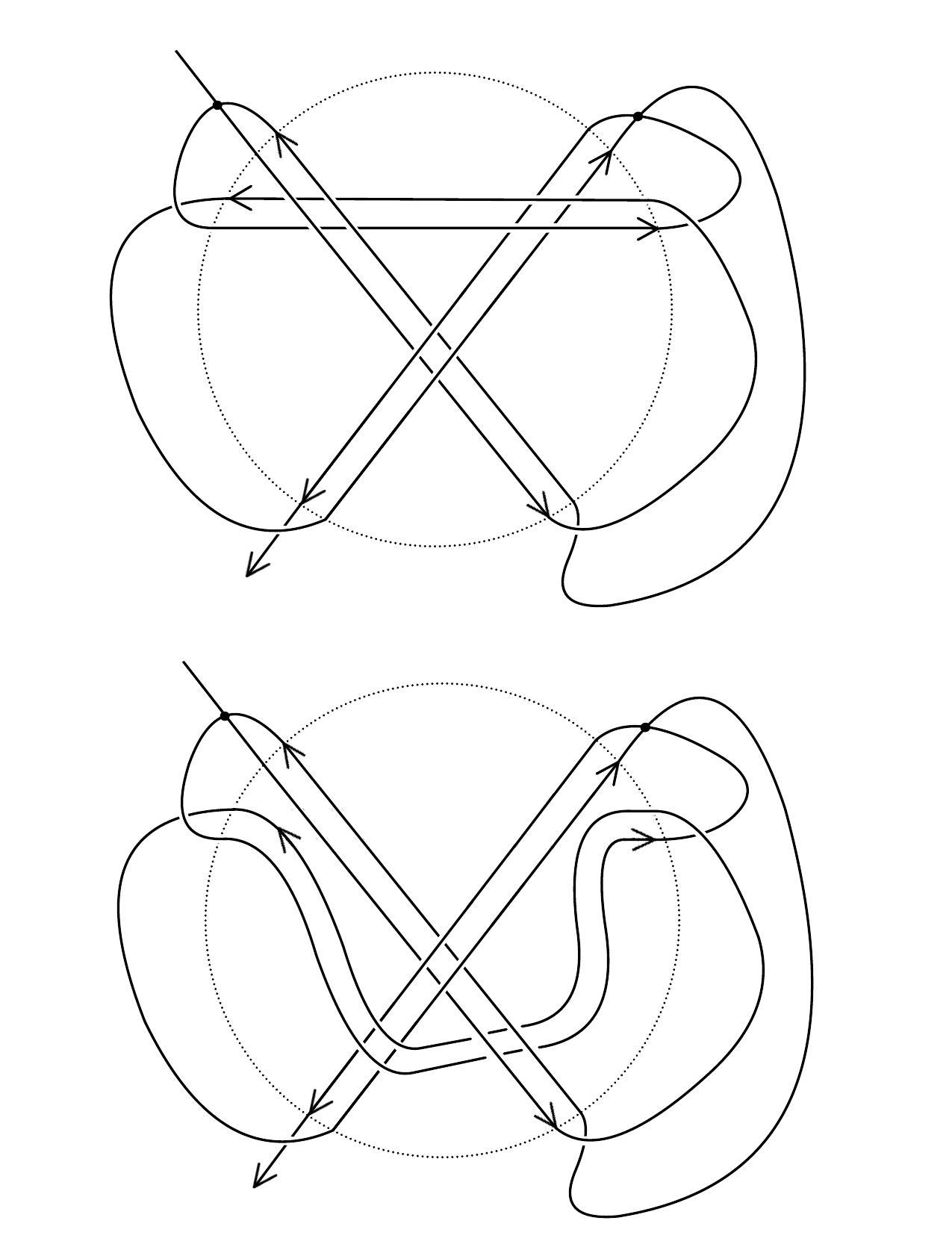}
         \caption{$K_l$}
         \label{fig:double_delta-lhs}
     \end{subfigure}
     \hfill
     \begin{subfigure}[t]{0.46\textwidth}
         \centering
         \includegraphics[width=\linewidth]{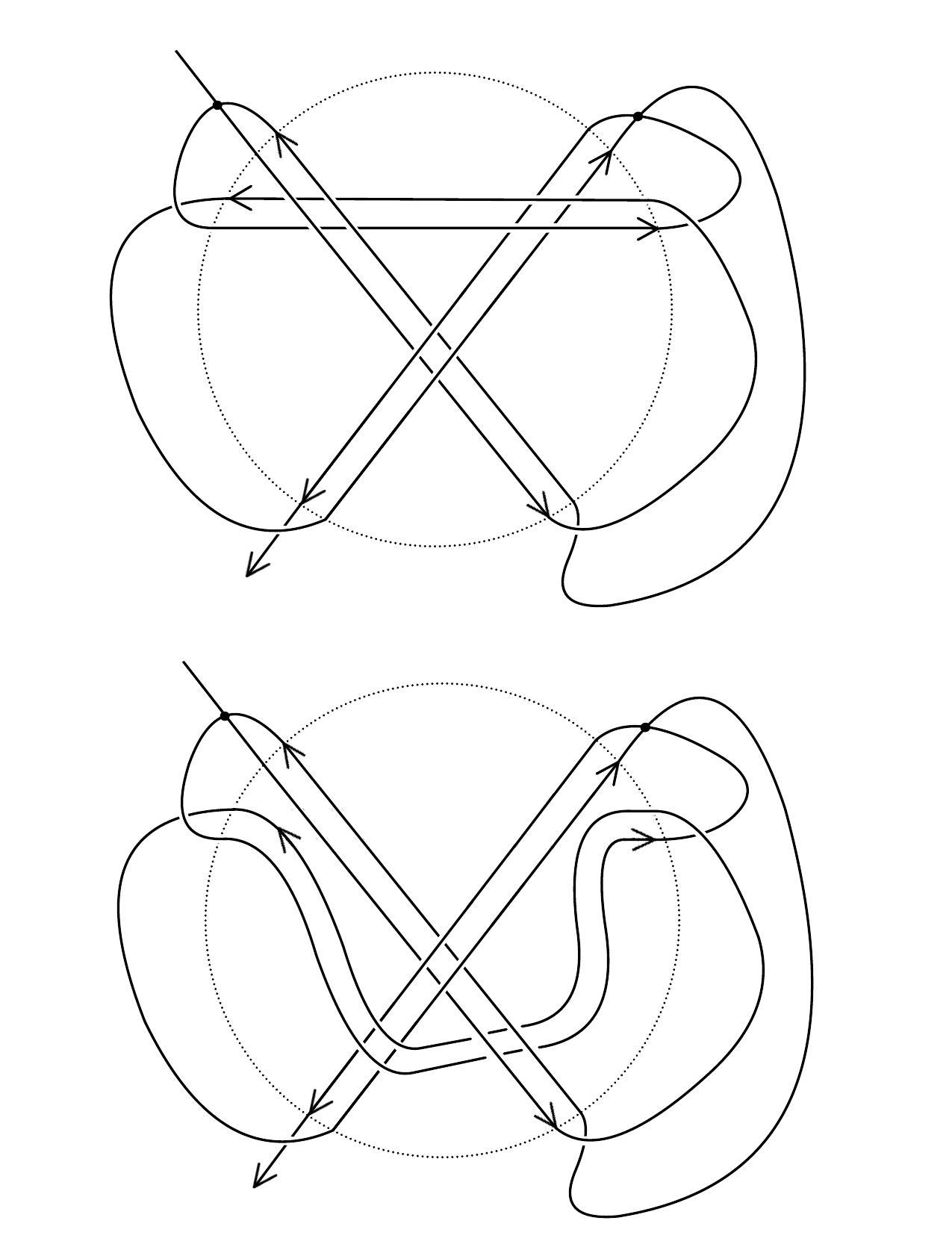}
         \caption{$K_r$}
         \label{fig:tdouble_delta-rhs}
     \end{subfigure}
        \caption{Double delta move with a singular crossing attached.}
        \label{fig:double_delta}
\end{figure}    
\end{example}

Example~\ref{ex:improvement} illustrates that $\rho_1^\textup{s}$ is an improved invariant compared to $\Delta^\textup{s}$. 
However, it still has some short-comings; for example, it does not distinguish the Conway- and Kinoshita-Terasaka knots, which are related by a mutation, or the singular Conway- and Kinoshita-Terasaka knots where a standard crossing outside of the mutation area is replaced by a singular crossing. 
This is shown in the code on Github, see footnote~\ref{footnote:github-knots}.


\section{Summary}
We have provided a fast and computationally easy way to compute the singular Alexander polynomial for singular knots.
Although the crossing relations for the non-singular crossings can be derived from the fundamental group, this becomes problematic for the singular crossing, this was left an open question in this thesis.
The crossing relations were instead found by solving the singular Reidemeister moves.

We have also shown an improved invariant for singular knots: $\rho_1^\textup{s}$, which distinguishes a class of knots that are $S$-equivalent.
Although the invariant property of $\rho_1^\textup{s}$ may not be immediately evident, this becomes more apparent in the context of quantum algebras, where the invariant had its origin.

The knot invariants introduced in this chapter are generally easy to find computationally (with the code that has been made available on GitHub, see footnote~\ref{footnote:github-knots}).
Even for knots with a larger number of crossings this does not take long or much computational power. 
The singular knot table in appendix~\ref{app:knot_table}) shows the invariants for knots up to $5$ crossings that have been found this way, and may provide a starting point for creating a larger knot table for singular knots.

\bibliographystyle{ieeetr}
\bibliography{biblio.bib}

\newpage
\appendix

\section{Background of perturbed Alexander invariant}\label{app:Heisenberg}
In this section we give some background regarding the development of the perturbed Alexander polynomial, which is based on ref.~\cite{bar2021perturbed} where the authors proposed a technique for turning algebra into perturbed Gaussians, and found an associated universal quantum invariant for knots.
Expanding this universal invariant, $Z_\epsilon(K)$, in terms of $\epsilon$, at zeroth order it is equivalent to the Alexander polynomial and at first order it is equivalent to $\rho_1$ (which we refer to as the perturbed Alexander polynomial).
At first order, the expansion of $Z_\epsilon(K)$ (i.e. $\rho_1$) is found from the $R$-matrices (which is why in Definition~\ref{def:gcros} the $R$'s had subscript $1$).
The $R$-matrices are quartic polynomials in the elements that generate the algebra, but can alternatively be computed using the $g$-rules, which were introduced in ref.~\cite{bar2022perturbed} and in sec.~\ref{sec:perturbed} for singular knots.
We briefly introduce the relevant algebra and how the $g$-rules can be used to compute $\rho_1$; for a full explanation we refer to refs.~\cite{bar2021perturbed,bar2022perturbed}.

Starting from the Heisenberg algebra $H$, generated by $p,x$ which have the relation $[p,x]=1$, we can compute the algebra by doing all the commutations.
Alternatively, we can propose a generating function, which happens to take the shape of a Gaussian function:
\begin{equation}
    e^{(\pi_i + \pi_j)p}e^{(\xi_i + \xi_j)x}e^{-\xi_i\pi_j}\, .
\end{equation}
The generating function normal-orders (alphabetically) the generating elements $p$ and $x$, which have coefficients $\pi_{i}$, $\pi_j$ and $\xi_i$, $\xi_j$, respectively; the indices $i$ and $j$ corresponding to strand-labels.
To find a knot invariant, we associate an $R$-matrix to every crossings; where the $R$-matrix is given in exponential form as well, i.e. $R=e^{p\otimes x - 1 \otimes px}$. 
The first term in the exponent is on the overstrand, and the second term in the exponent is on the understrand. 
Using Gaussian functions is advantageous for computing a knot invariant when considering the multiplication of many $R$-matrices that construct a knot.
The knot invariant is computed by multiplying all the terms in order of appearance along the knot. 
The generating function for multiplication is:
\begin{equation}
    m^{ij}_k = e^{(\pi_i + \pi_j)p_k + (\xi_i + \xi_j)x_k - \xi_i\pi_j}\, ,
\end{equation}
and the crossing $R$-matrices are:
\begin{equation}
    R_{ij}^\pm = e^{\left(T^\mp-1\right)\left(p_i-p_j\right)x_j} \, , \qq{} R_{ij}\singular = e^{\left(s-1+s T\right)^{-1}\left(p_i-p_j\right)\left[(s-1)x_i + (1- sT)x_j\right]} \, .
\end{equation}
These are solutions to the (singular) Reidemeister moves, see footnote~\ref{footnote:github-knots}. 
The computation of the invariant from multiplication along the knot will relate to the Alexander polynomial~\cite{bar2021perturbed}.
In order to improve the invariant we add a perturbation to the Gaussians:
\begin{equation}
    \mathcal{R}_{ij}^{\pm,\sbullet[0.8]} = R_{ij}^{\pm,\sbullet[0.8]} \left( 1 + \epsilon P(p_i,p_j,x_i,x_j) + \order{\epsilon^2} \right) \, ,
\end{equation}
with $P$ a polynomial in $p_i,p_j,x_i,x_j$ at quartic order.
For each crossing, this perturbation adds normal-ordered $p$'s and $x$'s at the starting point of the strands.
When finding the invariant, we push these perturbations through the crossings, and takes the multiplication of all constant contributions from the commutations (such as illustrated in fig.~\ref{fig:example-sr3} for the found weights).
This pushing can be done using the relations:
\begin{align}
    (p\otimes 1) R_{ij} &= R_{ij} (p\otimes 1) + \partial_{x\otimes 1} R_{ij} \qq{} \text{and} \qq{}
    (1\otimes p) R_{ij} = R_{ij} (1\otimes p) + \partial_{x\otimes 1} R_{ij} \, .
\end{align}
For the different crossings and after some manipulation, this becomes:
\begin{align}
    p_i R_{ij}^\pm &= R_{ij}^\pm p_i \, , \hspace{2mm}
    &&p_j R_{ij}^\pm = R_{ij}^\pm \left( T^\pm p_j - (T^\pm-1) p_i \right) \, , \hfill \\
    p_i R_{ij}\singular &= R_{ij}\singular \left( s\,p_i - (s-1) p_j\right)\, ,
    &&p_j R_{ij}\singular = R_{ij}\singular (s\,T \,p_j + (1-s\,T)p_i)  \, .
\end{align}
These equations are reminiscent of the rows of the the Alexander matrices given in Definition~\ref{def:notperturbation}. 
They describe how a $p$ element is pushed from the start of the crossing (when it is before the exponent), through the exponent that is on the crossing, towards the end of the crossing (when it is after the exponent).
Alternatively we can use a two-point function $g_{k,\beta}$ to describe how a $p$ element on strand $k$ is pushed through the crossing, when there is an $x$ element on strand $\beta$ with which it has the commutation relations $[p,x]=1$. 
For the positive and negative crossings these two-point functions are:
\begin{align}
    g_{i,\beta}= g_{i^+,\beta }+\delta _{i,\beta } \, , \qq{}
    g_{j,\beta}= \left(1-t^{\pm}\right) g_{i^+,\beta }+t^{\pm} g_{j^+,\beta }+\delta _{j,\beta }  \, ,
\end{align}
and for the singular crossings:
\begin{align}
    g_{i,\beta} =  s \, g_{i^+,\beta} + (1-s) \,  g_{j^+,\beta} + \delta_{i,\beta} \, , \qq{}
    g_{j,\beta} = (1- s t) \, g_{i^+,\beta} + s t \, g_{j^+,\beta }+\delta _{j,\beta }  \, . 
\end{align}
These are the g-rules given in Lemma~\ref{lem:grules} and~\ref{lem:sgrules} that were used to find an invariant expression in terms of g-functions for the crossings.
These crossing relations could also have been found by contracting the $p$ and $x$ pairs in the polynomials of the perturbation of $\mathcal{R}$.

\section{Singular knot table}\label{app:knot_table}
give an (incomplete) overview of some singular knots up to $5$ crossings to illustrate that the $\Gamma_\textup{s}$-calculus can make some distinctions that the $\Gamma$-calculus cannot because of the addition of a singular crossing. We also present the polynomials from the perturbed Alexander polynomial $\rho^\textup{s}_1$. 

The knots chosen in table~\ref{table:knot_table}, which have one singular crossing, align with the knots chosen in ref. \cite{gemein2001representations}, although they didn't take handedness into account explicitly. 
Also, the chosen knots coincide with ref. \cite{oyamaguchi2015enumeration}, which gave a more complete overview of all possible knots up to 6 crossings, where one crossing is singular.

We note that up to $5$ crossings $\Gamma_\textup{s}$ can distinguish left and right-handed singular knots, i.e. for $3_1, 4_1, 5_1, 5_2$, if the same crossing is singular for the left- and right-handed knots, then their singular Alexander polynomials are different. 
However, if we were to set $s=1$, the singular Alexander polynomials become indistinguishable. 
The extra information provided by the singular crossing - represented by the extra variable $s$ - distinguishes the left- and right-handed knots.
The singular Alexander invariant also conforms to the expected symmetries regarding the chosen singular crossings and handedness. 

From the table, it is clear that although the $\rho_1^\textup{s}$ invariant invariant can distinguish more different knot classes, such as the left-and right-handed knots also for $s=1$ (i.e. $\rho_1$ in ref.~\cite{bar2021perturbed}).
This is specifically the case for singular knots, where the presence of singular crossings increasingly results in a longer singular Alexander polynomial, but also in very lengthy expressions for $\rho_1^\textup{s}$. 
Although $\rho_1^\textup{s}$ is easy to compute and distinguish computationally, the knot classes it finds are not as easy to distinguish from simply looking at the expressions.

The zeroth order in the expansion of the perturbed invariant always gives an expression that is proportional to $\Gamma_\textup{s}$ (but normalised differently). Therefore, we left it out of the table as a separate entry.
As a final note, from the table, one can see that whenever $s=1$, the singular Alexander polynomial ($\rho_1^\textup{s}$) reduces to the Alexander polynomial ($\rho_1$) as if the singular crossings were replaced by positive ones.

We introduce a naming system for the knots: 
\begin{equation}\label{eq:knot_table_naming}
    \textup{(number of crossings)}_{\textup{number}}^{\textup{number of singular crossings}}(\textup{label(s) of the singular crossing(s)}) \nonumber
\end{equation}
The handedness of the knot is given by no bar (right-handed) or a bar (left-handed). 
By handedness, we mean that the knots illustrated in fig.~ \ref{fig:knottab_naming} are right-handed, and if all crossing signs were inverse, then the knots would be left-handed.
If it matters which crossing is considered singular, then this is indicated in brackets, following the labelling of the crossings as in fig.~\ref{fig:knottab_naming}.

\begin{figure}[!ht]
    \captionsetup[subfigure]{justification=centering}
    \centering
    \hfill
    \begin{subfigure}{0.2\textwidth}
        \includegraphics[width=0.8\textwidth]{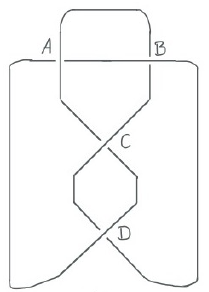}
        \caption{$1_1$}
    \end{subfigure}
    \hfill
    \begin{subfigure}{0.2\textwidth}
        \includegraphics[width=0.7\textwidth]{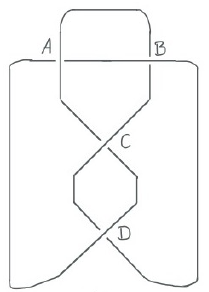}
        \caption{$2_1$}
    \end{subfigure}
    \hfill
    \begin{subfigure}{0.2\textwidth}
        \includegraphics[width=0.5\textwidth]{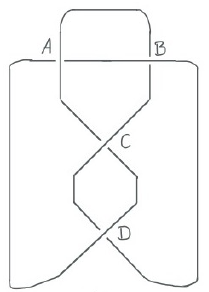}
        \caption{$3_1$}
    \end{subfigure}
    \hfill
    \newline
    \centering
    \hfill
    \begin{subfigure}{0.3\textwidth}
        \includegraphics[width=0.5\textwidth]{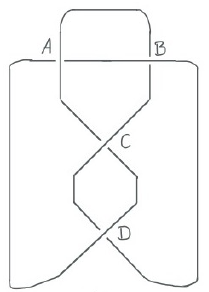}
        \caption{$4_1$}
    \end{subfigure}
    \hfill
    \begin{subfigure}{0.3\textwidth}
        \includegraphics[width=0.5\textwidth]{appendix-d/knot-table-figs-4.pdf}
        \caption{$5_1$}
    \end{subfigure}
    \hfill
    \begin{subfigure}{0.3\textwidth}
        \includegraphics[width=0.5\textwidth]{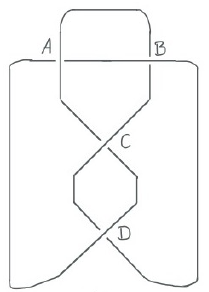}
        \caption{$5_2$}
    \end{subfigure}
    \hfill
    \caption{Labelling of the crossings used in the naming in table~\ref{table:knot_table}. 
    These are the right-handed diagrams. 
    The left-handed versions have the same labelling but the crossing signs are flipped.}
    \label{fig:knottab_naming}
\end{figure}

\newgeometry{left=2mm,bottom=2mm,right=2mm,top=2mm}
 
\begin{center}
\begin{longtable}{||p{20mm} p{72mm}|| p{20mm} p{72mm}||} 
 \hline
 Knot name & $\color{teal}\Gamma_\text{s}$ $\doteq \Delta_K^s$ \newline $\color{magenta}\rho_1^\text{s}$ 
 & Knot name & $\color{teal}\Gamma_\text{s}$ $\doteq \Delta_K^s$\newline $\color{magenta}\rho_1^\text{s}$ \\
 \hline \hline \\[-1em]
 $ 1_{1}^0, \overline{1}_1^0$ & $\color{teal} 1 $ \newline $\color{magenta}0$  &
 $ 1_{1}^1, \overline{1}_1^1$ & $\color{teal} 1 $ \newline $\color{magenta}-A(s-1) s (s t-1) \left[s (t+1)^2+t-1\right]$ \\
 \hline \hline \\[-1em]
 $ 2_{1}^0, \overline{2}_1^0$ & $\color{teal} 1 $ \newline $\color{magenta}0$  &
  \\
 \hline \\[-1em]
 $ 2_{1}^1, \overline{2}_1^1$ & $\color{teal} 1 $ \newline $\color{magenta} -A(s-1) s (s t-1) \left[s (t+1)^2+t-1\right]$  &
 $ 2_{1}^2, \overline{2}_1^2$& $\color{teal} 1 $ \newline $\color{magenta} A \, 2 s^3 t^3 [s^3 t (t+1)^2+s \left(-3 t^2+3 t+4\right)$ \newline $\color{magenta}- s^2 \left(t^4-2 t^3+2 t^2+5 t+2\right)+t^2+t-2]$  \\ 
 \hline \hline \\[-1em]
 $ 3_{1}^0, \overline{3}_1^0$& $\color{teal}t^2-t+1$ \newline $\color{magenta}\pm(t-1)^2 \left(t^2+1\right)t^{-2}$  &
 $ \overline{3}_1^1$ & $\color{teal}t+s-1$ \newline $\color{magenta} -A (s-1) s t (s^2 t^4+s^2 t^3+s^2 t+s^2-s t^2$\newline$\color{magenta}-s t-2 s-t^2+1)$  \\ 
 \hline \\[-1em]
 $ 3_{1}^1$ & $\color{teal}st^2-t+1$ \newline $\color{magenta}A\,t^{-2} \big[s^4 t^3 (t+1)^2 (t^2-t-1)+s^3 (-4 t^6+9 t^4 $\newline$\color{magenta}+4 t^3-3 t^2+t+1)+s^2 (6 t^5-6 t^4-9 t^3+6 t^2+2 t-3)-s (t-1)^2 (4 t^2-3)+(t-1)^3\big]$  &
 $ \overline{3}_1^2$ & $\color{teal} s^2 t+s^2-2 s +1$ \newline $\color{magenta}-A\,t^{-2}((s-1) (s^5 (t+1)^3+s^4 (t+1)^2 (2 t^2-t-5)+s^3 (2 t^5+2 t^4-7 t^3-3 t^2+14 t+10)-s^2 (t^3-7 t^2+6 t+10)-s (2 t^3+2 t^2+t-5)+t-1))$ \\
 \hline \\[-1em]
 $ 3_{1}^2$& $\color{teal}s^2 t^2+s^2 t-2 s t+1$ \newline $\color{magenta}A\,t^{-2}[(s t-1) (s^5 t (t+1)^3 (t^2-2)-s^4 t (t+1)^2 (5 t^2-t-8)+s^3 (10 t^4+8 t^3-17 t^2-17 t-2)+s^2 (-10 t^3+2 t^2+13 t+5)+s (5 t^2-t-4)-t+1)]$ &
 $ 3_{1}^3, \overline{3}_1^3$ & $\color{teal}s^2 t^2+2 s^2 t-3 s t+s^2 -3 s +3 $ \newline $\color{magenta} A\,t^{-2} s (s^7 (t+1)^5 (t^2-2)-8 s^6 (t+1)^4 (t^2-2)+s^5 (t+1)^3 (28 t^2-3 t-53)-s^4 (t+1)^2 (53 t^2-15 t-94)+s^3 (52 t^3+22 t^2-125 t-95)+s^2 (3 t^3-23 t^2+33 t+55)-18 s-3 t+3)$ \\
 \hline \hline \\[-1em]
 $ 4_{1}^0, \overline{4}_1^0$ & $\color{teal}-t^2+3 t-1$ \newline $\color{magenta}0$ &
 $  4_{1}^1(C), 4_{1}^1(D), \newline \overline{4}_{1}^1(A), \overline{4}_{1}^1(B)$ & $\color{teal} 2 s t-s-t+1$ \newline $\color{magenta}-A\,t^{-2}(s-1) (s^3 t^7-3 s^3 t^6-3 s^3 t^5+8 s^3 t^4+5 s^3 t^3-4 s^3 t^2-s^3 t+s^3+8 s^2 t^4-16 s^2 t^3+2 s^2 t^2+5 s^2 t-3 s^2-s t^5+2 s t^4-2 s t^3+5 s t^2-7 s t+3 s+t^3-3 t^2+3 t-1)$ \\
 \hline \\[-1em]
 $  4_{1}^1(A), 4_{1}^1(B), \newline \overline{4}_{1}^1(C), \overline{4}_{1}^1(D)$ & $\color{teal} -s t^2+2 s t+t-1$ \newline $\color{magenta}-A\,t^{-2}(s-1) s (4 s^2 t^5+3 s^2 t^4-4 s^2 t^3-s^2 t^2+2 s^2 t-10 s t^3+8 s t^2-s t-s-4 t^3+7 t^2-4 t+1)$ &
 $  4_{1}^2(AB), \newline \overline{4}_{1}^2(CD)$ & $\color{teal} s -t^2+s t+s+2 t-2$ \newline $\color{magenta} -A\,t^{-2}(s-1) s (3 s^4 t^5+5 s^4 t^4-2 s^4 t^2+s^4 t+s^4+3 s^3 t^5-s^3 t^4-10 s^3 t^3+2 s^3 t^2+s^3 t-3 s^3+2 s^2 t^5+2 s^2 t^4-13 s^2 t^3+5 s^2 t^2+s^2 t+3 s^2-11 s t^3+17 s t^2-3 s t-3 s-2 t^3+6 t^2-6 t+2)$ \\
 \hline \\[-1em]
 $ 4_{1}^2(CD), \newline \overline{4}_{1}^2(AB)$ & $\color{teal}s t^2+s t-s-2 t+2$ \newline $\color{magenta}A\,t^{-2}s (s^5 t^7+3 s^5 t^6-4 s^5 t^5-12 s^5 t^4-s^5 t^3+7 s^5 t^2-2 s^5-6 s^4 t^6+28 s^4 t^4+14 s^4 t^3-26 s^4 t^2-4 s^4 t+10 s^4+6 s^3 t^5-10 s^3 t^4-37 s^3 t^3+33 s^3 t^2+19 s^3 t-19 s^3+2 s^2 t^5-10 s^2 t^4+26 s^2 t^3-8 s^2 t^2-28 s^2 t+18 s^2+3 s t^3-13 s t^2+19 s t-9 s-2 t^3+6 t^2-6 t+2)$ &
 $ 4_{1}^2(AC \text{ or } BD),$ \newline $ 4_{1}^2(BC \text{ or } AD),$ \newline $ \overline{4}_{1}^2(AC\text{ or }BD),$ \newline $ \overline{4}_{1}^2(BC\text{ or }AD)$ & $\color{teal}2 s^2 t-s t-s+1$ \newline $\color{magenta} -A\,t^{-2}(s-1) (s^5 t^7-s^5 t^6-3 s^5 t^5+s^5 t^4+4 s^5 t^3+4 s^5 t^2+2 s^5 t-2 s^4 t^6+16 s^4 t^4-7 s^4 t^3-13 s^4 t^2-9 s^4 t-s^4+s^3 t^4-14 s^3 t^3+12 s^3 t^2+15 s^3 t+4 s^3+2 s^2 t^4+3 s^2 t^3+s^2 t^2-10 s^2 t-6 s^2-s t^3-4 s t^2+s t+4 s+t-1)$ \\
 \hline \\[-1em]
 $ 4_{1}^3(ABC), \newline 4_{1}^3(ABD),$ \newline $ \overline{4}_{1}^3(BCD), \newline \overline{4}_{1}^3(ACD)$ & $\color{teal}2 s^2 t+s^2-s t-3 s+2$ \newline $\color{magenta}-A\,t^{-2}(s-1) s (s^6 t^7+s^6 t^6-s^6 t^5+6 s^6 t^3+11 s^6 t^2+8 s^6 t+2 s^6-4 s^5 t^6-4 s^5 t^5+12 s^5 t^4-5 s^5 t^3-35 s^5 t^2-36 s^5 t-12 s^5+3 s^4 t^5+6 s^4 t^4-21 s^4 t^3+34 s^4 t^2+65 s^4 t+29 s^4+4 s^3 t^4+4 s^3 t^3-57 s^3 t-37 s^3-2 s^2 t^3-10 s^2 t^2+21 s^2 t+27 s^2+s t-11 s-2 t+2)$ &
 $ 4_{1}^3(ACD), \newline 4_{1}^3(BCD),$ \newline $ \overline{4}_{1}^3(ABC), \newline \overline{4}_{1}^3(ABD)$ & $\color{teal}s^2 t^2+2 s^2 t-3 s t-s+2$ \newline $\color{magenta} A\,t^{-2}s (s^7 t^7+5 s^7 t^6+3 s^7 t^5-10 s^7 t^4-16 s^7 t^3-9 s^7 t^2-2 s^7 t-8 s^6 t^6-20 s^6 t^5+5 s^6 t^4+60 s^6 t^3+55 s^6 t^2+19 s^6 t+s^6+19 s^5 t^5+33 s^5 t^4-64 s^5 t^3-117 s^5 t^2-63 s^5 t-8 s^5+2 s^4 t^5-28 s^4 t^4-2 s^4 t^3+109 s^4 t^2+98 s^4 t+23 s^4-4 s^3 t^4+25 s^3 t^3-37 s^3 t^2-75 s^3 t-31 s^3-6 s^2 t^2+30 s^2 t+22 s^2+4 s t^2-5 s t-9 s-2 t+2)$ \\
 \hline \\[-1em]
 $  4_{1}^4$ & $\color{teal}s^2 t^2+3 s^2 t+s^2-4 s t-4 s+4$ \newline $\color{magenta} A \, t^{-2} s^3 (s^7 t^7+7 s^7 t^6+9 s^7 t^5-9 s^7 t^4-33 s^7 t^3-35 s^7 t^2-17 s^7 t-3 s^7-10 s^6 t^6-36 s^6 t^5-14 s^6 t^4+116 s^6 t^3+186 s^6 t^2+116 s^6 t+26 s^6+27 s^5 t^5+63 s^5 t^4-118 s^5 t^3-386 s^5 t^2-325 s^5 t-93 s^5+4 s^4 t^5-28 s^4 t^4+12 s^4 t^3+392 s^4 t^2+488 s^4 t+180 s^4-16 s^3 t^4+14 s^3 t^3-178 s^3 t^2-426 s^3 t-210 s^3+12 s^2 t^3+4 s^2 t^2+204 s^2 t+156 s^2+16 s t^2-24 s t-72 s-16 t+16)$
 & \\
 \hline \hline \\[-1em]
 $  5_{1}^0, \overline{5}_1^0$ & $\color{teal} t^4-t^3+t^2-t+1$ \newline $\color{magenta}\pm t^{-4}(t-1)^2 \left(t^2+1\right) \left(2 t^4+t^2+2\right)$ &
 \\
 \hline \\[-1em]
 $  5_{1}^1$ & $\color{teal} s t^4-t^3+t^2-t+1$ \newline $\color{magenta}A\,t^{-4}[2 s^4 t^{11}+2 s^4 t^{10}-3 s^4 t^9-3 s^4 t^8-s^4 t^6+s^4 t^4-s^4 t^3-s^4 t^2-8 s^3 t^{10}+12 s^3 t^8+s^3 t^6+3 s^3 t^5-4 s^3 t^4+s^3 t^3+s^3 t^2+s^3 t+s^3+12 s^2 t^9-12 s^2 t^8-6 s^2 t^7+6 s^2 t^6-9 s^2 t^5+6 s^2 t^4+3 s^2 t-4 s^2-8 s t^8+16 s t^7-12 s t^6+8 s t^5-s t^4-5 s t^3+7 s t^2-10 s t+5 s+2 t^7-6 t^6+9 t^5-11 t^4+11 t^3-9 t^2+6 t-2]$ 
 &
 $  \overline{5}_{1}^1$ & $\color{teal} s+t^3-t^2+t-1$ \newline $\color{magenta}A\,t^{-4}[1 - 4 s + 6 s^2 - 4 s^3 + s^4 - 3 t + 8 s t - 6 s^2 t + s^4 t + 
 4 t^2 - 4 s t^2 - 6 s^2 t^2 + 8 s^3 t^2 - 2 s^4 t^2 - 4 t^3 + 
 6 s^2 t^3 - 2 s^4 t^3 + 4 t^4 - 4 s^3 t^4 + s^4 t^4 - 4 t^5 + 
 s t^5 - 3 s^2 t^5 + 3 s^3 t^5 + 3 t^6 + s t^6 + s^3 t^6 - 
 s^4 t^6 - t^7 - 3 s t^7 + 2 s t^8 + s^3 t^8 - s t^9 - s^2 t^9 + 
 s^3 t^9 + s^3 t^10 - s^4 t^10 + s^3 t^11 - s^4 t^11]$ 
 \\
 \hline \\[-1em]
 $  5_{1}^2$ & $\color{teal} s^2 t^4+s^2 t^3-2 s t^3+t^2-t+1$ \newline $\color{magenta}A\,t^{-4}[2 s^6 t^{11}+6 s^6 t^{10}+3 s^6 t^9-7 s^6 t^8-9 s^6 t^7-3 s^6 t^6-12 s^5 t^{10}-24 s^5 t^9+6 s^5 t^8+36 s^5 t^7+16 s^5 t^6-2 s^5 t^5+2 s^5 t^4+2 s^5 t^3+30 s^4 t^9+30 s^4 t^8-45 s^4 t^7-43 s^4 t^6+10 s^4 t^5-2 s^4 t^4-12 s^4 t^3-2 s^4 t^2-40 s^3 t^8+60 s^3 t^6-8 s^3 t^5-12 s^3 t^4+22 s^3 t^3+2 s^3 t^2+2 s^3 t+2 s^3+30 s^2 t^7-30 s^2 t^6-15 s^2 t^5+27 s^2 t^4-19 s^2 t^3-s^2 t^2+4 s^2 t-6 s^2-12 s t^6+24 s t^5-18 s t^4+4 s t^3+8 s t^2-12 s t+6 s+2 t^5-6 t^4+9 t^3-9 t^2+6 t-2]$ 
 &
 $  \overline{5}_{1}^2$ & $\color{teal} s^2 t+s^2-2 s+t^2-t+1$ \newline $\color{magenta} -A\,t^{-2}(s-1) (s^5 t^3+3 s^5 t^2+3 s^5 t+s^5+2 s^4 t^4+3 s^4 t^3-5 s^4 t^2-11 s^4 t-5 s^4+2 s^3 t^7+2 s^3 t^6-7 s^3 t^3-3 s^3 t^2+14 s^3 t+10 s^3-2 s^2 t^5-2 s^2 t^4+s^2 t^3+9 s^2 t^2-6 s^2 t-10 s^2-2 s t^5+2 s t^3-4 s t^2-s t+5 s+t-1)$ 
 \\
 \hline \\[-1em]
 $  5_{1}^3$ & $\color{teal} s^3 t^4+2 s^3 t^3+s^3 t^2-3 s^2 t^3-3 s^2 t^2+3 s t^2-t+1$ \newline $\color{magenta}A\,t^{-4}(s t-1) (2 s^7 t^{10}+10 s^7 t^9+17 s^7 t^8+5 s^7 t^7-20 s^7 t^6-28 s^7 t^5-15 s^7 t^4-3 s^7 t^3-14 s^6 t^9-54 s^6 t^8-55 s^6 t^7+37 s^6 t^6+108 s^6 t^5+68 s^6 t^4+9 s^6 t^3-3 s^6 t^2+42 s^5 t^8+114 s^5 t^7+29 s^5 t^6-162 s^5 t^5-150 s^5 t^4-16 s^5 t^3+15 s^5 t^2-70 s^4 t^7-110 s^4 t^6+88 s^4 t^5+195 s^4 t^4+33 s^4 t^3-37 s^4 t^2-3 s^4 t+70 s^3 t^6+30 s^3 t^5-137 s^3 t^4-60 s^3 t^3+45 s^3 t^2+5 s^3 t-3 s^3-42 s^2 t^5+30 s^2 t^4+61 s^2 t^3-27 s^2 t^2-12 s^2 t+8 s^2+14 s t^4-26 s t^3+3 s t^2+16 s t-7 s-2 t^3+6 t^2-6 t+2)$
 &
 $  \overline{5}_{1}^3$ & $\color{teal} s^3 t^2+2 s^3 t+s^3-3 s^2 t-3 s^2+3 s+t-1$ \newline $\color{magenta}-A\,t^{-4}(s-1) (s^7 t^5+5 s^7 t^4+10 s^7 t^3+10 s^7 t^2+5 s^7 t+s^7+s^6 t^5-3 s^6 t^4-22 s^6 t^3-38 s^6 t^2-27 s^6 t-7 s^6+3 s^5 t^6+7 s^5 t^5-3 s^5 t^4+43 s^5 t^2+57 s^5 t+21 s^5-14 s^4 t^5-18 s^4 t^4+21 s^4 t^3+5 s^4 t^2-55 s^4 t-35 s^4+3 s^3 t^7+3 s^3 t^6+s^3 t^5+27 s^3 t^4+6 s^3 t^3-40 s^3 t^2+15 s^3 t+35 s^3-2 s^2 t^5-6 s^2 t^4-24 s^2 t^3+20 s^2 t^2+15 s^2 t-21 s^2-3 s t^5-2 s t^4+8 s t^3+3 s t^2-13 s t+7 s+t^3-3 t^2+3 t-1)$ 
 \\
 \hline \\[-1em]
 $  5_{1}^4$ & $\color{teal} s^4 t^4+3 s^4 t^3+3 s^4 t^2+s^4 t-4 s^3 t^3-8 s^3 t^2-4 s^3 t+6 s^2 t^2+6 s^2 t-4 s t+1$ \newline $\color{magenta}A\,t^{-4}[2 s^{10} t^{11}+14 s^{10} t^{10}+39 s^{10} t^9+49 s^{10} t^8+7 s^{10} t^7-63 s^{10} t^6-91 s^{10} t^5-61 s^{10} t^4-21 s^{10} t^3-3 s^{10} t^2-20 s^9 t^{10}-120 s^9 t^9-270 s^9 t^8-220 s^9 t^7+150 s^9 t^6+480 s^9 t^5+430 s^9 t^4+180 s^9 t^3+30 s^9 t^2+90 s^8 t^9+450 s^8 t^8+765 s^8 t^7+225 s^8 t^6-900 s^8 t^5-1260 s^8 t^4-675 s^8 t^3-135 s^8 t^2-240 s^7 t^8-960 s^7 t^7-1084 s^7 t^6+468 s^7 t^5+1912 s^7 t^4+1448 s^7 t^3+372 s^7 t^2+4 s^7 t+420 s^6 t^7+1264 s^6 t^6+666 s^6 t^5-1414 s^6 t^4-1898 s^6 t^3-690 s^6 t^2-28 s^6 t-504 s^5 t^6-1032 s^5 t^5+144 s^5 t^4+1452 s^5 t^3+864 s^5 t^2+84 s^5 t+420 s^4 t^5+480 s^4 t^4-494 s^4 t^3-694 s^4 t^2-140 s^4 t-240 s^3 t^4-76 s^3 t^3+304 s^3 t^2+144 s^3 t+4 s^3+90 s^2 t^3-30 s^2 t^2-78 s^2 t-10 s^2-20 s t^2+12 s t+8 s+2 t-2]$ 
 &
 $  \overline{5}_{1}^4$ & $\color{teal} s^4 t^3+3 s^4 t^2+3 s^4 t+s^4-4 s^3 t^2-8 s^3 t-4 s^3+6 s^2 t+6 s^2-4 s+1$ \newline $\color{magenta}A\,t^{-4}[s^{10} t^9+7 s^{10} t^8+19 s^{10} t^7+21 s^{10} t^6-7 s^{10} t^5-49 s^{10} t^4-63 s^{10} t^3-41 s^{10} t^2-14 s^{10} t-2 s^{10}-10 s^9 t^8-60 s^9 t^7-130 s^9 t^6-80 s^9 t^5+150 s^9 t^4+340 s^9 t^3+290 s^9 t^2+120 s^9 t+20 s^9+45 s^8 t^7+225 s^8 t^6+360 s^8 t^5-675 s^8 t^3-855 s^8 t^2-450 s^8 t-90 s^8-124 s^7 t^6-492 s^7 t^5-488 s^7 t^4+488 s^7 t^3+1332 s^7 t^2+964 s^7 t+240 s^7+4 s^6 t^6+246 s^6 t^5+686 s^6 t^4+202 s^6 t^3-1110 s^6 t^2-1288 s^6 t-420 s^6-24 s^5 t^5-360 s^5 t^4-564 s^5 t^3+360 s^5 t^2+1092 s^5 t+504 s^5-4 s^4 t^5+56 s^4 t^4+350 s^4 t^3+150 s^4 t^2-560 s^4 t-420 s^4+4 s^3 t^5+4 s^3 t^4-76 s^3 t^3-176 s^3 t^2+140 s^3 t+240 s^3+2 s^2 t^3+54 s^2 t^2+6 s^2 t-90 s^2-4 s t^3-4 s t^2-12 s t+20 s+2 t-2]$
 \\
 \hline \\[-1em]
 $  5_1^5$ & $\color{teal} s^4 t^4+4 s^4 t^3+6 s^4 t^2+4 s^4 t+s^4-5 s^3 t^3-15 s^3 t^2-15 s^3 t-5 s^3+10 s^2 t^2+20 s^2 t+10 s^2-10 s t-10 s+5$ \newline $\color{magenta}A\,t^{-4}s (2 s^{11} t^{11}+18 s^{11} t^{10}+69 s^{11} t^9+141 s^{11} t^8+144 s^{11} t^7-210 s^{11} t^5-306 s^{11} t^4-234 s^{11} t^3-106 s^{11} t^2-27 s^{11} t-3 s^{11}-24 s^{10} t^{10}-192 s^{10} t^9-636 s^{10} t^8-1056 s^{10} t^7-672 s^{10} t^6+672 s^{10} t^5+1848 s^{10} t^4+1824 s^{10} t^3+984 s^{10} t^2+288 s^{10} t+36 s^{10}+132 s^9 t^9+924 s^9 t^8+2574 s^9 t^7+3234 s^9 t^6+462 s^9 t^5-4158 s^9 t^4-6006 s^9 t^3-4026 s^9 t^2-1386 s^9 t-198 s^9-440 s^8 t^8-2640 s^8 t^7-5940 s^8 t^6-4840 s^8 t^5+3300 s^8 t^4+10560 s^8 t^3+9460 s^8 t^2+3960 s^8 t+660 s^8+990 s^7 t^7+4945 s^7 t^6+8395 s^7 t^5+2450 s^7 t^4-9900 s^7 t^3-13835 s^7 t^2-7405 s^7 t-1480 s^7-1579 s^6 t^6-6281 s^6 t^5-6998 s^6 t^4+3238 s^6 t^3+12577 s^6 t^2+9379 s^6 t+2336 s^6+1813 s^5 t^5+5334 s^5 t^4+2492 s^5 t^3-6398 s^5 t^2-8001 s^5 t-2632 s^5-1479 s^4 t^4-2783 s^4 t^3+967 s^4 t^2+4367 s^4 t+2096 s^4+810 s^3 t^3+635 s^3 t^2-1310 s^3 t-1135 s^3+5 s^2 t^3-260 s^2 t^2+110 s^2 t+385 s^2+30 s t-70 s-5 t+5)$ & \\
 \hline \hline \\[-1em]
 $  5_2^0$ & $\color{teal} 2 t^2-3 t+2$ \newline $\color{magenta} t^{-2}(t-1)^2 \left(5 t^2-4 t+5\right)$ &
$ \overline{5}_{2}^0$ & $\color{teal} 2 t^2-3 t+2$ \newline $\color{magenta}- t^{-2}(t-1)^2 \left(5 t^2-4 t+5\right)$
 \\
 \hline \\[-1em]
 $  5_2^1(A), \newline 5_2^1(B)$ & $\color{teal} 2 s t^2-s t-2 t+2$ \newline $\color{magenta}A\,t^{-2}(5 s^4 t^7+s^4 t^6-11 s^4 t^5-2 s^4 t^4+6 s^4 t^3-s^4 t^2-2 s^4 t-20 s^3 t^6+13 s^3 t^5+32 s^3 t^4-10 s^3 t^3-21 s^3 t^2+10 s^3 t+4 s^3+30 s^2 t^5-42 s^2 t^4-25 s^2 t^3+43 s^2 t^2+3 s^2 t-13 s^2-20 s t^4+43 s t^3-11 s t^2-26 s t+14 s+5 t^3-15 t^2+15 t-5)$ &
 $  \overline{5}_2^1(A), \newline \overline{5}_2^1(B)$ & $\color{teal} -s t+2 s+2 t-2$ \newline $\color{magenta}-A\,t^{-2}(s-1) (4 s^3 t^7-4 s^3 t^5+3 s^3 t^4+s^3 t^3+s^3 t-s^3-5 s^2 t^5+3 s^2 t^4-s^2 t^3+s^2 t^2-5 s^2 t+3 s^2-4 s t^5+6 s t^4-2 s t^3-4 s t^2+7 s t-3 s-t^3+3 t^2-3 t+1)$
 \\
 \hline \\[-1em]
 $ 5_2^1(C), \newline 5_2^1(D), \newline 5_2^1(E)$ & $\color{teal} 2 s t^2-2 s t+s-t+1$ \newline $\color{magenta} A t^{-2} (4 s^4 t^7-2 s^4 t^6-8 s^4 t^5+3 s^4 t^4+2 s^4 t^3-3 s^4 t^2+s^3 t^7-17 s^3 t^6+27 s^3 t^5-4 s^3 t^4-3 s^3 t^3+4 s^3 t+14 s^2 t^5-29 s^2 t^4+12 s^2 t^3+3 s^2 t^2-3 s^2 t-s^2-s t^5-2 s t^4+7 s t^3-2 s t^2-4 s t+2 s+t^3-3 t^2+3 t-1)$ &
 $ \overline{5}_2^1(C),\newline \overline{5}_2^1(D), \newline \overline{5}_2^1(E)$ & $\color{teal} s t^2-2 s t+2 s+t-1$ \newline $\color{magenta}-A\,t^{-2} s (5 s^3 t^7-5 s^3 t^6-3 s^3 t^5+10 s^3 t^4-5 s^3 t^3+5 s^3 t-3 s^3-4 s^2 t^7+2 s^2 t^6+2 s^2 t^5-3 s^2 t^4-10 s^2 t^3+15 s^2 t^2-17 s^2 t+7 s^2+s t^5-3 s t^4+12 s t^3-15 s t^2+14 s t-5 s+4 t^5-8 t^4+6 t^3-t^2-2 t+1)$
 \\
 \hline \\[-1em]
 $  5_2^2(CD), \newline 5_2^2(DE), \newline 5_2^2(CE)$ & $\color{teal} 2 s t^2-s t-2 t+2$ \newline $\color{magenta} A \,t^{-2}s (3 s^5 t^7+s^5 t^6-10 s^5 t^5-6 s^5 t^4+5 s^5 t^3+s^5 t^2-2 s^5 t+2 s^4 t^7-16 s^4 t^6+18 s^4 t^5+22 s^4 t^4-20 s^4 t^2+6 s^4 t+4 s^4-4 s^3 t^6+24 s^3 t^5-46 s^3 t^4-s^3 t^3+25 s^3 t^2+5 s^3 t-11 s^3-6 s^2 t^4+28 s^2 t^3-22 s^2 t^2-12 s^2 t+12 s^2+4 s t^4-11 s t^3+5 s t^2+9 s t-7 s-2 t^3+6 t^2-6 t+2)$ &
 $  5_2^2(CD), \newline 5_2^2(DE), \newline 5_2^2(CE)$ & $\color{teal} -s t+2 s+2 t-2$ \newline $\color{magenta} -A\,t^{-2}(s-1) s (2 s^4 t^7-2 s^4 t^6+s^4 t^5+7 s^4 t^4-2 s^4 t^3+3 s^4 t-s^4+4 s^3 t^6-5 s^3 t^5-s^3 t^4-6 s^3 t^3+10 s^3 t^2-11 s^3 t+s^3-2 s^2 t^4+7 s^2 t^3-15 s^2 t^2+7 s^2 t+3 s^2-4 s t^4+3 s t^3-s t^2+7 s t-5 s-2 t^3+6 t^2-6 t+2)$
 \\
 \hline \\[-1em]
 $  5_2^2(AB)$ & $\color{teal} 2 s^2 t^2+s^2 t-s^2-4 s t+2 s+1$ \newline $\color{magenta} A\,t^{-2}(5 s^6 t^7+11 s^6 t^6-5 s^6 t^5-23 s^6 t^4-6 s^6 t^3+10 s^6 t^2+2 s^6 t-2 s^6-30 s^5 t^6-32 s^5 t^5+60 s^5 t^4+62 s^5 t^3-26 s^5 t^2-18 s^5 t+8 s^5+67 s^4 t^5+13 s^4 t^4-126 s^4 t^3-22 s^4 t^2+42 s^4 t-8 s^4+2 s^3 t^5-82 s^3 t^4+50 s^3 t^3+78 s^3 t^2-20 s^3 t+41 s^2 t^3-43 s^2 t^2-9 s^2 t+s^2-2 s t^3-2 s t^2+2 s t+2 s+t-1)$ &
 $  5_2^2(AB)$ & $\color{teal} s^2 \left(-t^2\right)+s^2 t+2 s^2+2 s t-4 s+1$ \newline $\color{magenta}-A\,t^{-2}(s-1) (2 s^5 t^5+2 s^5 t^4-3 s^5 t^3-s^5 t^2+5 s^5 t+3 s^5-2 s^4 t^5-4 s^4 t^4+7 s^4 t^3+7 s^4 t^2-13 s^4 t-11 s^4+8 s^3 t^5+8 s^3 t^4-15 s^3 t^3-15 s^3 t^2+16 s^3 t+16 s^3-17 s^2 t^3+27 s^2 t^2-6 s^2 t-14 s^2-8 s t^3+10 s t^2-9 s t+7 s+t-1)$ \\
 \hline \\[-1em]
 $ 5_2^2(BC),\newline 5_2^2(AC),\newline 5_2^2(BE),\newline 5_2^2(AD),\newline 5_2^2(AE),\newline 5_2^2(BD)$ & $\color{teal} 2 s^2 t^2-3 s t+s+1$ \newline $\color{magenta}A\,t^{-2}(4 s^6 t^7+6 s^6 t^6-5 s^6 t^5-11 s^6 t^4-3 s^6 t^3+s^6 t^2+s^5 t^7-23 s^5 t^6-13 s^5 t^5+29 s^5 t^4+35 s^5 t^3-3 s^5 t^2-3 s^5 t+s^5-2 s^4 t^6+50 s^4 t^5-3 s^4 t^4-56 s^4 t^3-32 s^4 t^2+9 s^4 t-49 s^3 t^4+23 s^3 t^3+47 s^3 t^2+9 s^3 t-2 s^3+2 s^2 t^4+21 s^2 t^3-14 s^2 t^2-18 s^2 t-s^2-s t^3-4 s t^2+2 s t+3 s+t-1)$ &
 $  \overline{5}_2^2(BC),\newline \overline{5}_2^2(AC),\newline \overline{5}_2^2(BE),\newline \overline{5}_2^2(AD),\newline $ & $\color{teal} 2 s^2+s t-3 s+1$ \newline $\color{magenta} -A\,t^{-2}((s-1) (s^5 t^7-s^5 t^6+2 s^5 t^4-2 s^5 t^3+2 s^5 t^2+5 s^5 t+s^5+2 s^4 t^6-5 s^4 t^5-s^4 t^4+7 s^4 t^3+5 s^4 t^2-18 s^4 t-6 s^4+4 s^3 t^5+3 s^3 t^4-s^3 t^3-19 s^3 t^2+18 s^3 t+13 s^3-2 s^2 t^4-2 s^2 t^3+8 s^2 t^2-s^2 t-13 s^2-5 s t^3+4 s t^2-5 s t+6 s+t-1)$ \\
 \hline \\[-1em]
 $  \overline{5}_2^2(BD),\newline \overline{5}_2^2(AE) $ & $\color{teal} s^2 t+2 s^2-4 s+2$ \newline $\color{magenta} -A\,t^{-2}(s-1) s (s^6 t^5+2 s^6 t^3+11 s^6 t^2+11 s^6 t+3 s^6-4 s^5 t^5+4 s^5 t^4+8 s^5 t^3-26 s^5 t^2-49 s^5 t-17 s^5+6 s^4 t^5-2 s^4 t^4-14 s^4 t^3+6 s^4 t^2+80 s^4 t+40 s^4+4 s^3 t^4-s^3 t^3+15 s^3 t^2-54 s^3 t-50 s^3-4 s^2 t^3-2 s^2 t^2+7 s^2 t+35 s^2-4 s t^2+7 s t-13 s-2 t+2)$ & \\
 \hline \\[-1em]
 $  5_2^3(ABC), \newline 5_2^3(ABD), \newline 5_2^3(ABE)$ & $\color{teal} 2 s^2 t^2+2 s^2 t-5 s t-s+3$ \newline $\color{magenta} A\,t^{-2} s (4 s^7 t^7+14 s^7 t^6+10 s^7 t^5-16 s^7 t^4-28 s^7 t^3-14 s^7 t^2-2 s^7 t+s^6 t^7-29 s^6 t^6-71 s^6 t^5-5 s^6 t^4+107 s^6 t^3+97 s^6 t^2+27 s^6 t+s^6-4 s^5 t^6+86 s^5 t^5+122 s^5 t^4-89 s^5 t^3-219 s^5 t^2-109 s^5 t-11 s^5+7 s^4 t^5-129 s^4 t^4-56 s^4 t^3+178 s^4 t^2+185 s^4 t+39 s^4-4 s^3 t^4+90 s^3 t^3-42 s^3 t^2-133 s^3 t-57 s^3-5 s^2 t^3-13 s^2 t^2+47 s^2 t+39 s^2+8 s t^2-12 s t-14 s-3 t+3)$ &
 $\overline{5}_2^3(ABC), \newline \overline{5}_2^3(ABD), \newline \overline{5}_2^3(ABE)$ & $\color{teal} 2 s^2 t+2 s^2-s t-5 s+3$ \newline $\color{magenta}-A\,t^{-2}(s-1) s (s^6 t^7+s^6 t^6-5 s^6 t^5-9 s^6 t^4+3 s^6 t^3+19 s^6 t^2+17 s^6 t+5 s^6-4 s^5 t^6+32 s^5 t^4+28 s^5 t^3-48 s^5 t^2-76 s^5 t-28 s^5+3 s^4 t^5-13 s^4 t^4-65 s^4 t^3+13 s^4 t^2+126 s^4 t+64 s^4+12 s^3 t^4+17 s^3 t^3+41 s^3 t^2-89 s^3 t-77 s^3-s^2 t^3-17 s^2 t^2+16 s^2 t+52 s^2-8 s t^2+9 s t-19 s-3 t+3)$ \\
 \hline \\[-1em]
 $  5_2^3(BCD),\newline 5_2^3(ADE), \newline 5_2^3(BDE)$, \newline $ 5_2^3(ACD), \newline 5_2^3(BCE),\newline 5_2^3(ACE)$ & $\color{teal} 2 s^2 t^2+s^2 t-4 s t+2$ \newline $\color{magenta}A\,t^{-2}s (s t-1) (3 s^6 t^6+7 s^6 t^5-2 s^6 t^4-17 s^6 t^3-15 s^6 t^2-4 s^6 t+2 s^5 t^6-15 s^5 t^5-16 s^5 t^4+30 s^5 t^3+59 s^5 t^2+24 s^5 t-6 s^4 t^5+32 s^4 t^4-5 s^4 t^3-68 s^4 t^2-61 s^4 t-8 s^4+4 s^3 t^4-26 s^3 t^3+36 s^3 t^2+53 s^3 t+19 s^3+4 s^2 t^3-s^2 t^2-23 s^2 t-16 s^2-6 s t^2+9 s t+7 s+2 t-2)$ &
$ \overline{5}_2^3(BCD),\newline \overline{5}_2^3(ADE), \newline \overline{5}_2^3(BDE)$, \newline $ \overline{5}_2^3(ACD), \newline \overline{5}_2^3(BCE),\newline \overline{5}_2^3(ACE)$ & $\color{teal} s^2 t+2 s^2-4 s+2$ \newline $\color{magenta} -A\,t^{-2}(s-1) s (s^6 t^5+2 s^6 t^3+11 s^6 t^2+11 s^6 t+3 s^6-4 s^5 t^5+4 s^5 t^4+8 s^5 t^3-26 s^5 t^2-49 s^5 t-17 s^5+6 s^4 t^5-2 s^4 t^4-14 s^4 t^3+6 s^4 t^2+80 s^4 t+40 s^4+4 s^3 t^4-s^3 t^3+15 s^3 t^2-54 s^3 t-50 s^3-4 s^2 t^3-2 s^2 t^2+7 s^2 t+35 s^2-4 s t^2+7 s t-13 s-2 t+2)$ \\
 \hline \\[-1em]
 $  5_2^3(CDE)$ & $\color{teal} 2 s^2 t^2+2 s^2 t-5 s t-s+3$ \newline $\color{magenta}A\,t^{-2} s (4 s^7 t^7+14 s^7 t^6+10 s^7 t^5-16 s^7 t^4-28 s^7 t^3-14 s^7 t^2-2 s^7 t+s^6 t^7-29 s^6 t^6-71 s^6 t^5-5 s^6 t^4+107 s^6 t^3+97 s^6 t^2+27 s^6 t+s^6-4 s^5 t^6+86 s^5 t^5+122 s^5 t^4-89 s^5 t^3-219 s^5 t^2-109 s^5 t-11 s^5+7 s^4 t^5-129 s^4 t^4-56 s^4 t^3+178 s^4 t^2+185 s^4 t+39 s^4-4 s^3 t^4+90 s^3 t^3-42 s^3 t^2-133 s^3 t-57 s^3-5 s^2 t^3-13 s^2 t^2+47 s^2 t+39 s^2+8 s t^2-12 s t-14 s-3 t+3)$ &
 $  \overline{5}_2^3(CDE)$ & $\color{teal} 2 s^2 t+2 s^2-s t-5 s+3$ \newline $\color{magenta} -A\,t^{-2}(s-1) s (s^6 t^7+s^6 t^6-5 s^6 t^5-9 s^6 t^4+3 s^6 t^3+19 s^6 t^2+17 s^6 t+5 s^6-4 s^5 t^6+32 s^5 t^4+28 s^5 t^3-48 s^5 t^2-76 s^5 t-28 s^5+3 s^4 t^5-13 s^4 t^4-65 s^4 t^3+13 s^4 t^2+126 s^4 t+64 s^4+12 s^3 t^4+17 s^3 t^3+41 s^3 t^2-89 s^3 t-77 s^3-s^2 t^3-17 s^2 t^2+16 s^2 t+52 s^2-8 s t^2+9 s t-19 s-3 t+3)$ \\
 \hline \\[-1em]
 $  5_2^4(ABCD), \newline 5_2^4(ABCE), \newline 5_2^4(ABDE)$ & $\color{teal} 2 s^2 t^2+3 s^2 t+s^2-6 s t-4 s+5$ \newline $\color{magenta} A\,t^{-2} s^3 (3 s^7 t^7+13 s^7 t^6+14 s^7 t^5-16 s^7 t^4-50 s^7 t^3-46 s^7 t^2-19 s^7 t-3 s^7+2 s^6 t^7-20 s^6 t^6-70 s^6 t^5-14 s^6 t^4+178 s^6 t^3+258 s^6 t^2+142 s^6 t+28 s^6-12 s^5 t^6+58 s^5 t^5+116 s^5 t^4-181 s^5 t^3-541 s^5 t^2-422 s^5 t-108 s^5+30 s^4 t^5-86 s^4 t^4+20 s^4 t^3+536 s^4 t^2+642 s^4 t+222 s^4-32 s^3 t^4+58 s^3 t^3-266 s^3 t^2-550 s^3 t-264 s^3-6 s^2 t^3+10 s^2 t^2+278 s^2 t+194 s^2+44 s t^2-45 s t-91 s-26 t+22)$ &
 $  \overline{5}_2^4(ABCD), \newline \overline{5}_2^4(ABCE), \newline \overline{5}_2^4(ABDE)$ & $\color{teal} s^2 t^2+3 s^2 t+2 s^2-4 s t-6 s+5$ \newline $\color{magenta} A\,t^{-2}s^3 (s^7 t^7+7 s^7 t^6+14 s^7 t^5-36 s^7 t^3-52 s^7 t^2-31 s^7 t-7 s^7-10 s^6 t^6-48 s^6 t^5-48 s^6 t^4+100 s^6 t^3+256 s^6 t^2+200 s^6 t+54 s^6+36 s^5 t^5+108 s^5 t^4-53 s^5 t^3-477 s^5 t^2-528 s^5 t-176 s^5+2 s^4 t^5-52 s^4 t^4-46 s^4 t^3+406 s^4 t^2+736 s^4 t+318 s^4-12 s^3 t^4+14 s^3 t^3-132 s^3 t^2-572 s^3 t-352 s^3+24 s^2 t^3-14 s^2 t^2+218 s^2 t+248 s^2+12 s t^2+3 s t-107 s-26 t+22)$ \\
 \hline \\[-1em]
 $ 5_2^4(BCDE), \newline 5_2^4(ACDE)$ & $\color{teal} 2 s^2 t^2+2 s^2 t-5 s t-s+3$ \newline $\color{magenta} A\,t^{-2}s^3 (2 s^7 t^7+4 s^7 t^6-8 s^7 t^5-30 s^7 t^4-32 s^7 t^3-14 s^7 t^2-2 s^7 t+3 s^6 t^7-5 s^6 t^6+s^6 t^5+83 s^6 t^4+153 s^6 t^3+105 s^6 t^2+27 s^6 t+s^6-18 s^5 t^6+3 s^5 t^5-66 s^5 t^4-259 s^5 t^3-275 s^5 t^2-114 s^5 t-11 s^5+36 s^4 t^5-s^4 t^4+201 s^4 t^3+341 s^4 t^2+215 s^4 t+40 s^4-18 s^3 t^4-17 s^3 t^3-246 s^3 t^2-212 s^3 t-63 s^3-27 s^2 t^3+48 s^2 t^2+138 s^2 t+55 s^2+36 s t^2-40 s t-32 s-12 t+10)$ &
 $ \overline{5}_2^4(BCDE), \newline \overline{5}_2^4(ACDE)$ & $\color{teal} 2 s^2 t+2 s^2-s t-5 s+3$ \newline $\color{magenta} -A\,t^{-2}(s-1) s^3 (s^6 t^7+3 s^6 t^6+3 s^6 t^5+5 s^6 t^4+17 s^6 t^3+27 s^6 t^2+19 s^6 t+5 s^6-6 s^5 t^6-12 s^5 t^5-4 s^5 t^4-34 s^5 t^3-104 s^5 t^2-98 s^5 t-30 s^5+8 s^4 t^5+17 s^4 t^4+13 s^4 t^3+131 s^4 t^2+205 s^4 t+78 s^4+6 s^3 t^4-20 s^3 t^3-46 s^3 t^2-206 s^3 t-114 s^3+3 s^2 t^3-8 s^2 t^2+82 s^2 t+99 s^2+10 s t-48 s-12 t+10)$ \\
 \hline \\[-1em]
 $  \overline{5}_2^5$ & $\color{teal} 2 s^2 t^2+4 s^2 t+2 s^2-7 s t-7 s+7$ \newline $\color{magenta} A\,t^{-2} s^5 (2 s^7 t^7+8 s^7 t^6+s^7 t^5-45 s^7 t^4-100 s^7 t^3-98 s^7 t^2-47 s^7 t-9 s^7+3 s^6 t^7-3 s^6 t^6-11 s^6 t^5+123 s^6 t^4+437 s^6 t^3+571 s^6 t^2+339 s^6 t+77 s^6-24 s^5 t^6-35 s^5 t^5-143 s^5 t^4-738 s^5 t^3-1346 s^5 t^2-1019 s^5 t-279 s^5+77 s^4 t^5+141 s^4 t^4+594 s^4 t^3+1638 s^4 t^2+1673 s^4 t+565 s^4-108 s^3 t^4-187 s^3 t^3-985 s^3 t^2-1620 s^3 t-714 s^3+13 s^2 t^3+83 s^2 t^2+841 s^2 t+591 s^2+132 s t^2-74 s t-304 s-93 t+73)$ & \\
 \hline
\caption{  $A = (t-1)^{-1}(s-1+st)^{-2}$. 
The $\pm$ sometimes written in the invariants indicates a plus for the right-handed variant and a minus for the left-handed variant.
The $\Gamma_\textup{s}$ entries are proportional upto a value $t^{\pm k} s^{\pm l}$ ($k,l\in\mathbb{Z}$) to the computational value from $\Gamma_\textup{s}$, to group them more efficiently. 
}
\label{table:knot_table}
\end{longtable}
\end{center}


\end{document}